\newtheorem{ass}[theorem]{Assumption}
\newcommand{\bRgeq}{{\mathbb R}_{\geq 0}}
\newcommand{\bR}{{\mathbb R}}
\newcommand{\bZ}{\mathbb{Z}}
\newcommand{\drho}{\;{\rm d}\rho}
\newcommand{\dzeta}{\;{\rm d}\zeta}
\newcommand{\id}{{\rm id}}
\newcommand{\dd}[1]{\frac{\rm d}{{\rm d}#1}}
\newcommand{\ddt}{\dd{t}}
\newcommand{\ek}{e}
\newcommand{\ttau}{\Delta t}
\def\epsilon{\varepsilon}
\begin{document}
\title{%\texttt{\jobname} \textbf{DRAFT} \quad {\rm \mydate} \\
Error analysis for a finite difference scheme \\ for 
axisymmetric mean curvature flow \\ of genus-0 surfaces%
}
\author{Klaus Deckelnick\footnotemark[2]\ \and 
        Robert N\"urnberg\footnotemark[3]}

\renewcommand{\thefootnote}{\fnsymbol{footnote}}
\footnotetext[2]{Institut f\"ur Analysis und Numerik,
Otto-von-Guericke-Universit\"at Magdeburg, 39106 Magdeburg, Germany}
\footnotetext[3]{Department of Mathematics, University of Trento, Trento,
Italy}

\date{}

\maketitle

\begin{abstract}
We consider a finite difference approximation of mean curvature flow
for axisymmetric surfaces of genus zero. A careful treatment of the 
degeneracy at the axis of rotation for the one dimensional partial differential
equation for a parameterization of the generating curve allows us to
prove error bounds with respect to discrete $L^2$-- 
and $H^1$--norms for a fully discrete approximation. 
The theoretical results are confirmed with the help of numerical
convergence experiments.
We also present numerical simulations for
some genus-0 surfaces, including for a non-embedded self-shrinker
for mean curvature flow.
\end{abstract} 

\begin{keywords} mean curvature flow, axisymmetry, finite differences, 
error analysis, self-shrinker
\end{keywords}

\begin{AMS} 
65M60, 65M12, 65M15, 53C44, 35K55
\end{AMS}
\renewcommand{\thefootnote}{\arabic{footnote}}

\pagestyle{myheadings}
\thispagestyle{plain}
\markboth{K. DECKELNICK AND R. N\"URNBERG}
{ERROR ANALYSIS FOR AXISYMMETRIC MEAN CURVATURE FLOW}

\setcounter{equation}{0}
\section{Introduction} \label{sec:intro}

Consider a family of surfaces $(\mathcal S(t))_{t \in [0,T]} \subset \bR^3$ 
evolving by mean curvature flow, i.e.\
\begin{equation} \label{eq:mcfS}
\mathcal{V}_{\mathcal{S}} = k_m\qquad\text{ on } \mathcal{S}(t).
\end{equation}
Here, $\mathcal{V}_{\mathcal{S}}$ denotes the normal velocity of 
$\mathcal{S}(t)$ in the direction of the normal $\vec\nu_{\mathcal{S}(t)}$,
and $k_m$ is the mean curvature of $\mathcal{S}(t)$, i.e.\ the sum of its
principal curvatures. As the $L^2$--gradient flow for the area functional, \eqref{eq:mcfS} is one
of the most important geometric evolution equations with applications in materials science and image
processing. We refer the reader to \cite{Ecker04,Mantegazza11} for an 
introduction and important results of mean curvature flow.

In this paper we are concerned with the numerical approximation of solutions of \eqref{eq:mcfS} using
a parametric approach. 
If $\vec X: \mathcal M \times [0,T) \to \bR^3$ is a family of 
embeddings such that $\mathcal S(t) = \vec X(\mathcal M,t)$, 
then \eqref{eq:mcfS} is satisfied if $\vec X_t \circ \vec X^{-1} 
= k_m \vec\nu_{\mathcal{S}(t)}$ on $\mathcal{S}(t)$.
Making use of the fact that the mean curvature vector 
$k_m \vec\nu_{\mathcal{S}(t)}$ can be written as 
$\Delta_{\mathcal S(t)} \vec\id$, 
where $\Delta_{\mathcal S(t)}$ denotes the
Laplace--Beltrami operator on $\mathcal{S}(t)$,
Dziuk \cite{Dziuk91}
suggested a finite element method in order to approximate solutions of 
\eqref{eq:mcfS}. 
While this approach has been widely
used in the following years, the numerical analysis of the method remained 
open. Only recently, Kov{\'a}cs, Li and Lubich \cite{KovacsLL19}
obtained error estimates
for a parametric approach that uses not only the position $\vec X$, but also the mean curvature and the normal as variables.
Both approaches are based on
evolution equations in which the velocity vector points purely in normal direction, which may lead to degenerate meshes at the discrete level. 
A way to tackle this 
issue is to introduce a suitable additional tangential motion in such a way, 
that mesh points are better distributed on the approximate surface. 
Corresponding schemes have been
suggested by Barrett, Garcke and N\"urnberg \cite{gflows3d}, 
as well as by Elliott and Fritz \cite{ElliottF17}, using DeTurck's trick. 
For the approach from \cite{ElliottF17}, 
error bounds for a finite difference scheme in the case of surfaces of 
torus type have recently been obtained in \cite{Mierswa20}.
For more details on the numerical approximation of geometric evolution
equations we refer to the review articles \cite{DeckelnickDE05,bgnreview}. 

In what follows, we are interested in the case that the evolving surfaces 
are axisymmetric with respect to the $x_2$-axis, i.e.\ we assume that there 
exists a mapping
$\vec x(\cdot,t) : [0,1] \to \bRgeq\times \bR$ such that
\begin{equation*} %\label{eq:defS}
\mathcal S(t) = \left\{ \left( \vec x(\rho,t) \cdot \vec e_1 \cos \theta, 
\vec x(\rho,t) \cdot \vec e_2, \vec x(\rho,t) \cdot \vec e_1 \sin \theta 
\right)^T : \rho \in [0,1], \theta \in [0, 2 \pi] \right\}.
\end{equation*}
As shown in \cite{aximcf,schemeD}, the law (\ref{eq:mcfS}) translates into the following evolution equation for the curves $(\Gamma(t))_{t \in [0,T]}$ 
parameterised by $\vec x(\cdot,t)$:
\begin{equation} \label{eq:xtflow}
\vec x_t\cdot\vec\nu  = 
\varkappa - \frac{\vec\nu\cdot\vec\ek_1}{\vec x\cdot\vec\ek_1} ,
\end{equation}
where $\vec\nu$ is a unit normal to $\Gamma(t)$ and
$\varkappa = \vec\varkappa\cdot\vec\nu$ denotes curvature, with
$\vec\varkappa = 
\frac1{|\vec x_\rho|}( \frac{\vec x_\rho}{|\vec x_\rho|} )_\rho$ 
the curvature vector. 
We note that without the last term on the right hand side of \eqref{eq:xtflow},
the problem collapses to curve shortening flow,
\begin{equation} \label{eq:csf}
\vec x_t \cdot \vec\nu = \varkappa,
\end{equation}
which is the analogue of \eqref{eq:mcfS} for curves. 
Since the relations \eqref{eq:xtflow} and \eqref{eq:csf} 
only prescribe the normal velocity, there is
a certain freedom in choosing the tangential part of the velocity vector.
Setting the tangential velocity to zero for \eqref{eq:csf} leads to the
formulation 
$\vec x_t = \frac1{|\vec x_\rho|}( \frac{\vec x_\rho}{|\vec x_\rho|} )_\rho$,
and optimal error bounds for a semidiscrete continuous-in-time finite element
approximation of it have been obtained by Dziuk \cite{Dziuk94}.
On the other hand, an application of DeTurck's trick gives rise to the
formulation $\vec x_t = \frac{\vec x_{\rho \rho}}{|\vec x_\rho|^2}$ for
classical curve shortening flow. An error analysis for a corresponding 
semidiscrete finite element scheme has been first presented in 
\cite{DeckelnickD95},
and this was later extended in \cite{ElliottF17} to the family of problems 
$\alpha\vec x_t + (1-\alpha) (\vec x_t \cdot \vec\nu) \vec\nu 
= \frac{\vec x_{\rho \rho}}{|\vec x_\rho|^2}$, $\alpha \in (0,1]$. 
Inspired by the ideas in \cite{DeckelnickD95}, the present authors 
in \cite{schemeD} applied 
DeTurck's trick to the flow \eqref{eq:xtflow} to obtain the system
\begin{equation} \label{eq:newsystem} 
\vec x_t = \frac{\vec x_{\rho \rho}}{|\vec x_\rho|^2} - \frac{\vec\nu\cdot\vec\ek_1}{\vec x\cdot\vec\ek_1} \vec\nu,
\end{equation}
cf.\ \cite[(1.7)]{schemeD}.
Note that \eqref{eq:newsystem} is strictly parabolic and that
a solution of \eqref{eq:newsystem} satisfies (\ref{eq:xtflow}). 
The difference to curve shortening flow consists in the presence of the term 
$\frac{\vec\nu\cdot\vec\ek_1}{\vec x\cdot\vec\ek_1}$, which is the principal 
curvature related to the parallels of $\mathcal S(t)$. 
It is possible to rewrite \eqref{eq:newsystem} in the following divergence form
\begin{equation} \label{eq:newsystemvar}
\vec x \cdot \vec e_1 \, | \vec x_\rho |^2 \vec x_t =
\bigl( ( \vec x \cdot \vec e_1) \vec x_\rho \bigr)_\rho 
- | \vec x_\rho |^2 \vec e_1,
\end{equation}
giving rise to a natural variational formulation. On the basis of this weak 
formulation, a semi-implicit scheme using piecewise linear finite elements in 
space and a backward Euler method in time was suggested by the authors
in \cite{schemeD}. 
In particular, in \cite[Theorem~2.2]{schemeD} optimal error bounds both in 
$H^1$ and $L^2$ 
are obtained in the case of genus-1 surfaces. While the numerical method still 
performs well also for genus-0 surfaces, it is however not possible to apply 
the employed analysis to genus-0 surfaces.
The reason for the additional difficulties 
in the genus-0 case comes from the different properties of the curves 
$\Gamma(t)$: for genus-1 surfaces, $\Gamma(t)$ is a closed curve satisfying 
$\vec x \cdot \vec e_1>0$ on $[0,1]$ so that this term is bounded strictly
from below on compact time intervals, thus simplifying the analysis. 
In contrast, a description of a genus-0 surface in our setting requires 
$\Gamma(t)$ to be open with its endpoints lying on the $x_2$-axis,
which means that $\vec x \cdot \vec e_1=0$ at the endpoints of the interval 
$[0,1]$.
Furthermore, in order to guarantee smoothness of the surface $\mathcal S(t)$, 
the curve $\Gamma(t)$ has to meet the $x_2$-axis at a right angle. 
In order to formulate the resulting initial-boundary problem, it is convenient
to rewrite (\ref{eq:newsystem}). 
To do so, we choose $\vec\nu = \vec\tau^\perp$ with the unit tangent 
$\vec\tau = \frac{\vec x_\rho}{| \vec x_\rho |}$ and $\cdot^\perp$ denoting
clockwise rotation by $\frac{\pi}{2}$. Observing that 
\[
(\vec \nu \cdot \vec e_1) \, \vec \nu = \frac{1}{| \vec x_\rho |^2} ( \vec x_\rho^\perp \cdot \vec e_1) \, \vec x_\rho^\perp = \frac{1}{| \vec x_\rho |^2}  ( \vec x_\rho \cdot \vec e_2) \,  \vec x_\rho^\perp,
\]
we are led to the following system
\begin{subequations} \label{eq:xtnew}
\begin{alignat}{2} \label{eq:xtnewflow}
\vec x_t &= 
\frac{\vec x_{\rho \rho}}{|\vec x_\rho|^2} - \frac{1}{| \vec x_\rho |^2} \, 
\frac{ \vec x_\rho \cdot \vec e_2}{\vec x \cdot \vec e_1} \,  \vec x_\rho^\perp
\quad && \text{in } (0,1) \times (0,T], \\
\vec x\cdot\vec\ek_1 & = 0, \ 
\vec x_\rho \cdot\vec\ek_2 = 0 \quad &&\text{on } \{0,1\}\times [0,T].
 \label{eq:xtbc}
\end{alignat}
\end{subequations}
Since $\vec x(\rho,t) \cdot \vec e_1 \to 0$, as $\rho \to \rho_0 \in \{0,1\}$, 
the last term in \eqref{eq:xtnewflow} needs to be treated with care.
Using the boundary conditions \eqref{eq:xtbc}, it is shown in \eqref{eq:hosp}
in Appendix~\ref{sec:A}, with the help of L'Hospital's rule, that
\begin{displaymath}
\lim_{\rho \searrow 0} \left[ - \frac{1}{| \vec x_\rho(\rho,t) |^2} \,  \frac{ \vec x_\rho(\rho,t) \cdot \vec e_2}{\vec x(\rho,t) \cdot \vec e_1} \,  \vec x_\rho^\perp(\rho,t) \right] =
\frac{\vec x_{\rho \rho} (0,t) \cdot \vec e_2}{| \vec x_\rho(0,t) |^2}  \, \vec e_2,
\end{displaymath}
so that the expression acts like a second order operator close to the boundary 
without affecting the parabolicity of the problem. Nevertheless, the different 
behaviour of $\frac{\vec\nu\cdot\vec\ek_1}{\vec x\cdot\vec\ek_1}$ in the 
interior and close to the boundary is a major problem for the analysis of a 
numerical scheme. 
Rather than using the variational form \eqref{eq:newsystemvar} that worked 
well for genus-1 surfaces, we shall introduce a scheme which directly 
discretises \eqref{eq:xtnewflow} with the help of  finite differences. 
Our main result are optimal error bounds measuring the error in discrete 
versions of the usual integral norms. 

The paper is organised as follows. In Section~\ref{sec:weak}, we formulate 
our assumptions on the solution of \eqref{eq:xtnew} and derive a number of 
properties that will be used in the error analysis. In the second part, we 
introduce our numerical scheme and provide an estimate for the consistency 
error. Section~\ref{sec:proof} is devoted to the proof of our main error 
estimates, which include an $\mathcal O(h^2+\ttau)$ bound for a 
discrete %$L^2$--norm and an $\mathcal O(h+\ttau)$ bound for a discrete 
$H^1$--norm. 
Finally, in Section~\ref{sec:nr} we present the results of 
several numerical simulations.

We end this section with a few comments about notation. 
Throughout, $C$ denotes a generic positive constant independent of 
the mesh parameter $h$ and the time step size $\ttau$.
At times $\epsilon$ will play the role of a (small)
positive parameter, with $C_\epsilon>0$ depending on $\epsilon$, but
independent of $h$ and $\ttau$.

\setcounter{equation}{0}
\section{Finite difference discretization} \label{sec:weak}

\begin{ass} \label{ass:smooth}
Let $\vec x:[0,1] \times [0,T] \to \bRgeq\times \bR$
be a solution of \eqref{eq:xtnew} such that $\partial_t^i \partial_\rho^j \vec x$ exist and are continuous on $[0,1] \times [0,T]$ for all
$i,j \in \mathbb N_0$ with $2i+j \leq 4$. Furthermore, we assume that $\vec x_\rho(\rho,t) \neq 0$ 
for all $(\rho,t) \in [0,1] \times [0,T]$, as well as
\begin{equation} \label{eq:pos}
\vec x\cdot\vec\ek_1  > 0 \quad \text{in } (0,1) \times [0,T].
\end{equation}
\end{ass}
It is beyond the scope of this paper to prove the existence of a solution
to \eqref{eq:xtnew} with the above regularity. We note, however, that 
the well-posedness of the corresponding problem, in the
case that the curves $\Gamma(t)$ can be written as a graph,
was recently studied in \cite{GarckeM20}.

Let us collect a few 
properties of the solution which will be used in the error analysis. To begin, there exist constants $0<c_0 \leq C_0$ such that
\begin{equation} \label{eq:length}
c_0 \leq | \vec x_\rho | \leq C_0 \quad \text{in }  [0,1] \times [0,T].
\end{equation}
Recalling (\ref{eq:xtbc}), we infer that  $\vec x_\rho(0,t) \cdot \vec e_1 \geq c_0, \vec x_\rho(1,t) \cdot \vec e_1 \leq -c_0$ which together with 
(\ref{eq:pos}) implies that there exist $c_1>0, \delta >0$ with
\begin{subequations} \label{eq:leftright}
\begin{alignat}{2}
\vec x_\rho \cdot \vec e_1 &\geq \tfrac12 c_0 \quad && \text{in } 
[0,\delta] \times [0,T], \label{eq:leftbd} \\
\vec x_\rho \cdot \vec e_1 &\leq -\tfrac12 c_0 \quad && \text{in } 
[1-\delta,1] \times [0,T], \label{eq:rightbd} \\
\vec x \cdot \vec e_1 & \geq c_1 \quad && \text{in } 
[\tfrac12\delta,1-\tfrac12\delta] \times [0,T]. \label{eq:lowbound}
\end{alignat}
\end{subequations}
Let us formally describe how this observation
can be translated into an estimate on the solution. If we multiply \eqref{eq:xtnewflow} by $-\vec x_{\rho \rho}$ and integrate over $[0,1]$, 
we find %after simple calculations that
upon integration by parts and observing from \eqref{eq:xtbc} that
$\vec x_t\cdot\vec x_\rho = 0$ on $\{0,1\}\times[0,T]$,
that
\begin{equation} \label{eq:formal1}
\tfrac12 \ddt \int_0^1 | \vec x_\rho |^2 \drho 
+ \int_0^1 \frac{| \vec x_{\rho \rho } |^2}{| \vec x_\rho |^2} \drho
- \int_0^1 \frac{1}{| \vec x_\rho |^2} \, 
\frac{ \vec x_\rho \cdot \vec e_2}{\vec x \cdot \vec e_1} \,  \vec x_\rho^\perp \cdot \vec x_{\rho \rho} \drho =0.
\end{equation}
Since \eqref{eq:xtbc} implies
$\frac{\vec x_\rho^\perp(0,t)}{| \vec x_\rho(0,t) |} \approx - \vec e_2$
on $[0,\delta]$, we can rewrite the third term on $[0,\delta]$,
on noting \eqref{eq:length} and \eqref{eq:leftbd},
as 
\begin{align} \label{eq:formal2}
& - \int_0^{\delta} \frac{1}{| \vec x_\rho |^2} \, 
\frac{ \vec x_\rho \cdot \vec e_2}{\vec x \cdot \vec e_1} \,  \vec x_\rho^\perp \cdot \vec x_{\rho \rho} \drho \\ & \
\approx \int_0^{\delta} \frac{1}{| \vec x_\rho |} \, 
\frac{ \vec x_\rho \cdot \vec e_2}{\vec x \cdot \vec e_1} \,   \vec x_{\rho \rho} \cdot \vec e_2 \drho 
= \tfrac12 \int_0^{\delta} \frac{1}{| \vec x_\rho |} \, \frac{1}{\vec x \cdot \vec e_1}
\left[ (\vec x_\rho \cdot \vec e_2)^2 \right]_\rho \drho \nonumber \\ & \
= \tfrac12 \left[ \frac{1}{| \vec x_\rho |} \, \frac{1}{\vec x \cdot \vec e_1} (\vec x_\rho \cdot \vec e_2)^2 \right]^{\delta}_0 \!\!
+ \tfrac12 \int_0^{\delta} \frac{\vec x_\rho \cdot \vec e_1}{| \vec x_\rho |} \,
\frac{(\vec x_\rho \cdot \vec e_2)^2}{(\vec x \cdot \vec e_1)^2} \drho
+ \tfrac12 \int_0^{\delta} \frac{\vec x_\rho \cdot \vec x_{\rho \rho}}{| \vec x_\rho |^3} \, \frac{(\vec x_\rho \cdot \vec e_2)^2}{\vec x \cdot
\vec e_1} \drho  \nonumber\\ & \
\geq \tfrac14 \frac{c_0}{C_0} \int_0^{\delta} \frac{(\vec x_\rho \cdot \vec e_2)^2}{(\vec x \cdot \vec e_1)^2} \drho
+ \tfrac12 \int_0^{\delta} \frac{\vec x_\rho \cdot \vec x_{\rho \rho}}{| \vec x_\rho |^3} \, \frac{(\vec x_\rho \cdot \vec e_2)^2}{\vec x \cdot
\vec e_1} \drho, \nonumber
\end{align}
so that we obtain $L^2$--control of $\frac{\vec x_\rho \cdot \vec e_2}{\vec x \cdot \vec e_1}$ close to $0$.
A similar calculation applies close to $1$, while the denominator $\vec x \cdot \vec e_1$
is bounded away from $0$ on $[\delta,1-\delta]$ in view of \eqref{eq:lowbound}. Our aim is to mimic this argument within the error analysis (cf. Lemma~\ref{lem:lemma2}).
To do so, we will directly discretise \eqref{eq:xtnewflow} using a finite difference scheme,
and the discrete analogue of the above estimate is then obtained by multiplying with a suitable second order finite difference.

In order to define our finite difference scheme,
let us introduce the set of grid points
$\mathcal G_h:= \lbrace q_0, q_1,\ldots, q_J \rbrace$, where $q_j = jh$ and $h = \frac 1J, j=0,\ldots, J$.
For a grid function $\vec v: \mathcal G_h \to \bR^2$ we write 
$\vec v_j:= \vec v(q_j)$, $j=0,\ldots,J$. 
Furthermore we associate with $\vec v$ the following finite difference 
operators:
\begin{subequations} \label{eq:fdo}
\begin{alignat}{2}
\delta^- \vec v_j &:= \frac{\vec v_j - \vec v_{j-1}}{h}, \quad &&
j=1,\ldots,J; \label{eq:fdo-} \\
\delta^+ \vec v_j &:= \delta^- \vec v_{j+1} =
\frac{\vec v_{j+1}-\vec v_j}{h}, \quad &&
j=0,\ldots,J-1; \label{eq:fdo+} \\
\delta^1 \vec v_j &:= \tfrac12 (\delta^+ \vec v_j + \delta^- \vec v_j) = 
\frac{\vec v_{j+1} - \vec v_{j-1}}{2h},\quad &&
j=1,\ldots,J-1. \label{eq:fdo1} \\
\delta^2 \vec v_j &:= \frac{\delta^+ \vec v_j - \delta^- \vec v_j}h = 
\frac{\vec v_{j+1}- 2\vec v_j + \vec v_{j-1}}{h^2}, 
\quad &&j=1,\ldots,J-1. \label{eq:fdo2}
\end{alignat}
\end{subequations}
Two grid functions $\vec v$ and $\vec w$ satisfy the following summation by
parts formula:
\begin{equation} \label{eq:sbp}
h \sum_{j=1}^J \delta^- \vec v_j \cdot \delta^- \vec w_j
= -  h \sum_{j=1}^{J-1} \vec v_j \cdot \delta^2 \vec w_j 
+ \vec v_J \cdot \delta^- \vec w_J - \vec v_0 \cdot \delta^+ \vec w_0 .
\end{equation}
In addition, we introduce the following discrete norms and seminorms
\begin{align} \label{eq:norms}
& | \vec v |_{0,h}^2 := \tfrac12 h |\vec v_0|^2 
+ h \sum_{j=1}^{J-1} |\vec v_j|^2 + \tfrac12 h |\vec v_J|^2; \quad
| \vec v |_{1,h}^2 := h \sum_{j=1}^J |\delta^- \vec v_j|^2;  \\
& \| \vec v \|_{1,h}^2 := | \vec v |_{0,h}^2 + | \vec v |_{1,h}^2; \quad
| \vec v |_{2,h}^2 := h \sum_{j=1}^{J-1} |\delta^2 \vec v_j|^2.\nonumber
\end{align}
We also recall the following inverse inequality, as well as a
discrete version of a well--known Sobolev type inequality.

\begin{lemma} \label{lem:dsi}
Let $\vec v: \mathcal G_h \to \bR^2$ be an arbitrary grid function.
Then
\begin{align} 
\max_{1\leq k \leq J} |\delta^- \vec v_k| & \leq h^{-\frac12}
| \vec v |_{1,h}, \label{eq:inv1} \\
\max_{0\leq k \leq J} | \vec v_k|^2 &
\leq | \vec v |_{0,h}^2 + 2 | \vec v |_{0,h} | \vec v |_{1,h}, \label{eq:dsi0} \\
\max_{1\leq k \leq J} |\delta^- \vec v_k|^2 &
\leq | \vec v |_{1,h}^2 + 2 | \vec v |_{1,h} | \vec v |_{2,h}. \label{eq:dsi}
\end{align}
In addition, if $\vec v_0 \cdot \vec e_1 = \vec v_J \cdot \vec e_1 =0$, then
\begin{equation} \label{eq:vje1}
| \vec v_j \cdot \vec e_1 | 
\leq 2 q_j (1-q_j) \max_{1 \leq k \leq J} | \delta^- \vec v_k |, \quad 0 \leq j \leq J. 
\end{equation}
\end{lemma}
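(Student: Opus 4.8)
The plan is to treat the four bounds in turn: \eqref{eq:inv1} is immediate, \eqref{eq:dsi0} and \eqref{eq:dsi} come from one discrete Sobolev argument applied to $\vec v$ and then to $\delta^-\vec v$, and \eqref{eq:vje1} follows by telescoping from both endpoints. For \eqref{eq:inv1} I would just note $| \vec v |_{1,h}^2 = h\sum_{j=1}^J|\delta^-\vec v_j|^2 \ge h\,|\delta^-\vec v_k|^2$ for every $k\in\{1,\ldots,J\}$ and take the maximum.

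For \eqref{eq:dsi0} I would fix an index $k^\star$ attaining $\max_{0\le k\le J}|\vec v_k|$. For any index $l$, telescoping together with $|\vec v_j|^2-|\vec v_{j-1}|^2=(\vec v_j+\vec v_{j-1})\cdot(\vec v_j-\vec v_{j-1})=h\,(\vec v_j+\vec v_{j-1})\cdot\delta^-\vec v_j$ and the Cauchy--Schwarz inequality give
\begin{equation*}
\bigl|\,|\vec v_{k^\star}|^2-|\vec v_l|^2\,\bigr| \le \sum_{j=1}^J h\,|\vec v_j+\vec v_{j-1}|\,|\delta^-\vec v_j| \le \Bigl(h\sum_{j=1}^J|\vec v_j+\vec v_{j-1}|^2\Bigr)^{\frac12}\,| \vec v |_{1,h}.
\end{equation*}
Since $|\vec v_j+\vec v_{j-1}|^2\le 2|\vec v_j|^2+2|\vec v_{j-1}|^2$ and, directly from the definition of $|\cdot|_{0,h}$, $h\sum_{j=1}^J(|\vec v_j|^2+|\vec v_{j-1}|^2)=2\,| \vec v |_{0,h}^2$, the first factor is at most $2\,| \vec v |_{0,h}$, so that $\bigl|\,|\vec v_{k^\star}|^2-|\vec v_l|^2\,\bigr|\le 2\,| \vec v |_{0,h}\,| \vec v |_{1,h}$. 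As the quadrature weights in $|\cdot|_{0,h}$ sum to $1$, I can then choose $l$ with $|\vec v_l|^2\le | \vec v |_{0,h}^2$, which gives \eqref{eq:dsi0}. For \eqref{eq:dsi} I would run exactly the same argument on the grid function $\vec w_k:=\delta^-\vec v_k$, $k=1,\ldots,J$: here $\vec w_k-\vec w_{k-1}=h\,\delta^2\vec v_{k-1}$, the weight identity becomes $h\sum_{k=2}^J(|\vec w_k|^2+|\vec w_{k-1}|^2)\le 2\,| \vec v |_{1,h}^2$, and $\min_{1\le k\le J}|\vec w_k|^2\le h\sum_{k=1}^J|\vec w_k|^2=| \vec v |_{1,h}^2$.

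For \eqref{eq:vje1} I would set $a_j:=\vec v_j\cdot\vec e_1$ and $M:=\max_{1\le k\le J}|\delta^-\vec v_k|$, so $a_0=a_J=0$ and $|a_j-a_{j-1}|=h\,|\delta^-\vec v_j\cdot\vec e_1|\le hM$. Summing the increments from $j=0$ gives $|a_j|\le jhM=q_jM$, summing from $j=J$ gives $|a_j|\le(J-j)hM=(1-q_j)M$; hence $|a_j|\le\min\{q_j,1-q_j\}\,M$, and \eqref{eq:vje1} follows from the elementary inequality $\min\{s,1-s\}\le 2s(1-s)$ for $s\in[0,1]$.

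I expect the only real subtlety to be the constant in \eqref{eq:dsi0} and \eqref{eq:dsi}: estimating $|\vec v_j+\vec v_{j-1}|$ crudely by $2\max_k|\vec v_k|$ would reintroduce the maximum on the right-hand side and, after solving the resulting quadratic inequality, produce a strictly weaker bound; keeping $|\vec v_j+\vec v_{j-1}|$ inside the Cauchy--Schwarz sum and invoking the exact weight identity is what yields the sharp form. A minor bookkeeping point is that the auxiliary grid $\{q_1,\ldots,q_J\}$ has length $1-h$ rather than $1$, but as the argument only needs the relevant weights to sum to at most $1$, this causes no trouble.
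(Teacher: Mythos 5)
Your proposal is correct and follows essentially the same route as the paper: telescoping the squared quantity, writing each increment as $h(\vec v_j+\vec v_{j-1})\cdot\delta^-\vec v_j$, applying Cauchy--Schwarz together with the weight identity $h\sum_{j=1}^J(|\vec v_j|^2+|\vec v_{j-1}|^2)=2|\vec v|_{0,h}^2$, and then using that the quadrature weights sum to one (the paper averages the resulting inequality over the base index rather than selecting a single index $l$ with $|\vec v_l|^2\le|\vec v|_{0,h}^2$, which is an equivalent step). The arguments for \eqref{eq:inv1}, \eqref{eq:dsi} and \eqref{eq:vje1} likewise match the paper's.
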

\begin{proof}
The inverse inequality \eqref{eq:inv1} follows immediately from the definition
\eqref{eq:norms}. 
Let $0\leq k \leq J$. For $0\leq j \leq k$ it follows from \eqref{eq:fdo+}, 
the elementary inequality %\eqref{eq:elementary} 
\begin{equation*} %\label{eq:elementary}
(a + b)^2 \leq 2(a^2 + b^2) , \quad a,b \in \bR
\end{equation*}
and \eqref{eq:norms} that
\begin{align}\label{eq:sdi0}
| \vec v_k|^2 & 
= | \vec v_j|^2 + 
\sum_{\ell=j}^{k-1} (|\vec v_{\ell+1}|^2 - | \vec v_{\ell}|^2 )
= | \vec v_j|^2 + h
\sum_{\ell=j}^{k-1} (\vec v_{\ell+1} + \vec v_{\ell} ) \cdot
\delta^+ \vec v_\ell \\ & 
\leq | \vec v_j|^2 + \sqrt{2} \Bigl( h \sum_{\ell=j}^{k-1} ( | \vec v_{\ell+1} |^2 + | \vec v_{\ell} |^2 ) \Bigr)^{\frac{1}{2}} | \vec v |_{1,h} \leq  | \vec v_j|^2 + 
2 | \vec v |_{0,h} | \vec v |_{1,h}.\nonumber 
\end{align} 
Similarly, for $k+1 \leq j \leq J$, we have 
\begin{align}
| \vec v_k|^2 & 
= | \vec v_j|^2 - 
\sum_{\ell=k}^{j-1} (| \vec v_{\ell+1}|^2 - | \vec v_{\ell}|^2 )
\leq | \vec v_j|^2 + 2 | \vec v |_{0,h} | \vec v |_{1,h}.
\label{eq:sdi1}
\end{align} 
Combining \eqref{eq:sdi0} and \eqref{eq:sdi1} yields that
$\max_{0\leq k \leq J} | \vec v_k|^2 \leq 
| \vec v_j|^2 + 2 | \vec v |_{0,h} | \vec v |_{1,h}$, for $0 \leq j \leq J$. Multiplication by $\frac{h}{2}$ for $j=0,J$, and by $h$ for $1 \leq j \leq J-1$, followed by
summation over $j=0,\ldots,J$, yields \eqref{eq:dsi0}. 
The inequality \eqref{eq:dsi} is obtained in an analogous manner,
taking into account that $\delta^+ \delta^- \vec v_j = \delta^2 \vec v_j$. 

In order to prove (\ref{eq:vje1}), we observe that 
$\vec v_0 \cdot \vec e_1= \vec v_J \cdot \vec e_1= 0$ implies
\[
| \vec v_j \cdot \vec e_1 | \leq h \sum_{k=1}^j | \delta^- \vec v_k | \leq jh \, \max_{1 \leq k \leq J} | \delta^- \vec v_k | \quad \mbox{ and } \quad | \vec v_j \cdot \vec e_1 | \leq (J-j) h \, \max_{1 \leq k \leq J} | \delta^- \vec v_k |,
\]
so that
\[
| \vec v_j \cdot \vec e_1 | \leq \min\{q_j, 1-q_j\} \, 
\max_{1 \leq k \leq J} | \delta^- \vec v_k | 
\leq 2 q_j(1-q_j) \, \max_{1 \leq k \leq J} | \delta^- \vec v_k |, 
\quad 0 \leq j \leq J.
\]
\end{proof}

We consider the following fully discrete approximation, where
in order to discretise in time, we let $t_m=m\,\ttau$, $m=0,\ldots,M$, 
with the uniform time step $\ttau = \frac TM >0$. 
Let $\vec X^0_j= \vec x_0(q_j)$, $j=0,\ldots,J$. Then, 
for $m=0,\ldots,M-1$ find $\vec X^m: \mathcal G_h \to \bR^2$ such that
for $j=1,\ldots,J-1$
\begin{subequations} \label{eq:fd}
\begin{equation} \label{eq:findiv}
\frac{\vec X^{m+1}_j - \vec X^m_j}{\Delta t} = 
\frac{\delta^2 \vec X^{m+1}_j}{| \delta^1 \vec X^m_j |^2}  
- \frac{1}{| \delta^1 \vec X^m_j |^2} \, 
 \frac{\delta^1 \vec X^{m+1}_j \cdot \vec e_2 }
{\vec X^m_j \cdot \vec  e_1}
(\delta^1 \vec X_j^m)^\perp
\end{equation}
together with the boundary conditions
\begin{align} 
\vec X^{m+1}_0 \cdot \vec e_1 & = 0; \quad 
\delta^+ \vec X^{m+1}_0 \cdot \vec e_2 = \tfrac14 h | \delta^+ \vec X^m_0 |^2
\frac{\vec X^{m+1}_0 - \vec X^m_0}{\Delta t} \cdot \vec e_2 
, \label{eq:bcl} \\
\vec X^{m+1}_J \cdot \vec e_1 & = 0; \quad 
\delta^- \vec X^{m+1}_J \cdot \vec e_2 = -\tfrac14 h | \delta^- \vec X^m_J |^2
\frac{\vec X^{m+1}_J - \vec X^m_J}{\Delta t} \cdot \vec e_2 . \label{eq:bcr}
\end{align}
\end{subequations}
The above scheme requires the solution of a linear system in each time step. 
We will address the existence and uniqueness of this system in 
Section~\ref{sec:proof}, within the error analysis.
Furthermore, we remark that \eqref{eq:bcl} and \eqref{eq:bcr} are obtained from
inserting (\ref{eq:xtnewflow}), (\ref{eq:xtbc}) into a Taylor expansion at 
$\rho \in \{0,1\}$,
yielding a consistency error that is small enough to derive optimal error 
bounds. 
At the same time, the form of these conditions turns out to be crucial in 
order to handle the degeneracy of the equation close to the axis of rotation.

\begin{lemma}[Consistency] \label{lem:consist}
Suppose that $\vec x: [0,1] \times [0,T] \to \bR^2$ satisfies 
Assumption~\ref{ass:smooth}. 
Let $\vec x^m_j:= \vec x(q_j,t_m)$ for $j=0,\ldots,J$ and
$m=0,\ldots,M$. 
Define the consistency errors of the finite difference scheme \eqref{eq:fd} by
\begin{subequations} %\label{eq:residualabc}
\begin{equation} \label{eq:residual}
\vec R^{m+1}_j := \frac{\vec x^{m+1}_j- \vec x^m_j}{\Delta t}  - \frac{\delta^2 \vec x^{m+1}_j}{| \delta^1 \vec x^m_j |^2} 
+  \frac{1}{| \delta^1 \vec x^m_j |^2} \frac{\delta^1 \vec x^{m+1}_j \cdot \vec e_2 }{\vec x^m_j \cdot \vec e_1}  \; 
(\delta^1 \vec x_j^m)^\perp, \quad 1 \leq j \leq J-1,
\end{equation} 
as well as
\begin{align}
R^{m+1}_0 &:= \delta^+ \vec x^{m+1}_0 \cdot \vec e_2 
- \tfrac14 h | \delta^+ \vec x^m_0 |^2
\frac{ \vec x^{m+1}_0- \vec x^m_0}{\Delta t} \cdot \vec e_2 , 
\label{eq:residualb1} \\
R^{m+1}_J &:= \delta^- \vec x^{m+1}_J \cdot \vec e_2 
+ \tfrac14 h | \delta^- \vec x^m_J |^2
\frac{\vec x^{m+1}_J - \vec x^m_J}{\Delta t}  \cdot \vec e_2  . 
\label{eq:residualb2}
\end{align}
\end{subequations}
Then there exists a constant $C > 0$ such that, for $m=0,\ldots,M-1$,
\begin{equation} \label{eq:consist}
| \vec  R^{m+1}_j | \leq  C \,  \bigl( h^2 + \Delta t \bigr), \quad j=1,\ldots,J-1, 
\quad \text{ and } \quad
| R^{m+1}_0|+ | R^{m+1}_J | \leq C h  \bigl( h^2 + \Delta t \bigr).
\end{equation}
\end{lemma}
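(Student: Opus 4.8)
The plan is to substitute the exact solution $\vec x$ into the scheme \eqref{eq:fd} and bound the resulting discrepancies, treating the interior residuals $\vec R^{m+1}_j$, $1\le j\le J-1$, separately from the boundary residuals $R^{m+1}_0,R^{m+1}_J$. For the interior, I would subtract from the definition \eqref{eq:residual} the identity
\[
0 = \vec x_t - \frac{\vec x_{\rho\rho}}{|\vec x_\rho|^2} + \frac{1}{|\vec x_\rho|^2}\,\frac{\vec x_\rho\cdot\vec e_2}{\vec x\cdot\vec e_1}\,\vec x_\rho^\perp ,
\]
valid at $(q_j,t_{m+1})$ by \eqref{eq:xtnewflow}, and thereby write $\vec R^{m+1}_j$ as the sum of a temporal truncation error $\tfrac{1}{\Delta t}(\vec x^{m+1}_j-\vec x^m_j)-\vec x_t(q_j,t_{m+1})$, a ``parabolic'' error measuring $\tfrac{\delta^2\vec x^{m+1}_j}{|\delta^1\vec x^m_j|^2}$ against $\tfrac{\vec x_{\rho\rho}}{|\vec x_\rho|^2}$, and an ``axis'' error measuring $\tfrac{1}{|\delta^1\vec x^m_j|^2}\tfrac{\delta^1\vec x^{m+1}_j\cdot\vec e_2}{\vec x^m_j\cdot\vec e_1}(\delta^1\vec x^m_j)^\perp$ against its continuous counterpart.

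The temporal error is $\le\tfrac12\Delta t\,\sup|\partial_t^2\vec x|$, which is finite by Assumption~\ref{ass:smooth} (take $i=2$, $j=0$). For the parabolic error I would Taylor expand, using $\partial_\rho^4\vec x$, $\partial_\rho^3\vec x$ and $\partial_t\partial_\rho\vec x$ continuous (the cases $2i+j\le4$), so that $\delta^2\vec x^{m+1}_j=\vec x_{\rho\rho}(q_j,t_{m+1})+\mathcal O(h^2)$ and $\delta^1\vec x^m_j=\vec x_\rho(q_j,t_{m+1})+\mathcal O(h^2+\Delta t)$; since \eqref{eq:length} bounds $|\vec x_\rho|$, and hence (for $h$ small) also $|\delta^1\vec x^m_j|$, away from $0$, all reciprocals are controlled and this term is $\mathcal O(h^2+\Delta t)$, with no degeneracy.

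The axis error is the main obstacle, because $\vec x^m_j\cdot\vec e_1=\vec x(q_j,t_m)\cdot\vec e_1$ degenerates as $j\to0$ and $j\to J$. On $[\tfrac12\delta,1-\tfrac12\delta]$ the bound \eqref{eq:lowbound} keeps this denominator $\ge c_1$ and the estimate is routine; on $[0,\delta]$ and $[1-\delta,1]$ I would exploit structural identities obtained from \eqref{eq:xtbc} and \eqref{eq:xtnewflow} at the endpoints $\rho_0\in\{0,1\}$. From $\vec x(\rho_0,\cdot)\cdot\vec e_1\equiv0$ one gets $\vec x_t(\rho_0,\cdot)\cdot\vec e_1=0$; from $\vec x_\rho(\rho_0,\cdot)\cdot\vec e_2\equiv0$ together with \eqref{eq:xtnewflow} one successively obtains $\vec x_{\rho\rho}(\rho_0,t)\cdot\vec e_1=0$, $\vec x_\rho(\rho_0,t)\cdot\vec x_{\rho\rho}(\rho_0,t)=0$ and $\partial_\rho^3(\vec x\cdot\vec e_2)(\rho_0,t)=0$; and \eqref{eq:hosp} combined with \eqref{eq:xtnewflow} gives the pole relation $\vec x_t(\rho_0,t)\cdot\vec e_2=2\,\vec x_{\rho\rho}(\rho_0,t)\cdot\vec e_2/|\vec x_\rho(\rho_0,t)|^2$. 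Together with $\vec x(q_j,t)\cdot\vec e_1\ge c\min\{q_j,1-q_j\}$ (from \eqref{eq:length} and \eqref{eq:leftright}), the vanishing properties ensure that every quotient carrying $\vec x^m_j\cdot\vec e_1$ in the denominator has a numerator that also vanishes at the relevant endpoint, and so stays $\mathcal O(1)$; in particular $\tfrac{\vec x_\rho\cdot\vec e_2}{\vec x\cdot\vec e_1}$ extends to a bounded function on $[0,1]\times[0,T]$. The quantitative heart of the matter is that the central-difference error $\delta^1\vec x^{m+1}_j\cdot\vec e_2-\vec x_\rho(q_j,t_{m+1})\cdot\vec e_2=\tfrac{h^2}{6}\,\partial_\rho^3(\vec x\cdot\vec e_2)(\eta_j,t_{m+1})$, with $\eta_j\in(q_{j-1},q_{j+1})$, inherits the vanishing of $\partial_\rho^3(\vec x\cdot\vec e_2)$ at the endpoints, so near the boundary it is $\mathcal O(h^2\min\{q_j,1-q_j\})$; dividing by $\vec x^m_j\cdot\vec e_1\gtrsim\min\{q_j,1-q_j\}$ then still gives $\mathcal O(h^2)$. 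The time-level mismatch of the denominator is handled the same way, via $|\vec x(q_j,t_{m+1})\cdot\vec e_1-\vec x(q_j,t_m)\cdot\vec e_1|\le C\Delta t\min\{q_j,1-q_j\}$, which follows from $\vec x_t(\rho_0,\cdot)\cdot\vec e_1=0$. Assembling these pieces yields $|\vec R^{m+1}_j|\le C(h^2+\Delta t)$ uniformly in $j$.

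For the boundary residuals I would Taylor expand at $\rho=0$: using $\vec x_\rho(0,t_{m+1})\cdot\vec e_2=0$ and $\partial_\rho^3(\vec x\cdot\vec e_2)(0,t_{m+1})=0$ gives $\delta^+\vec x^{m+1}_0\cdot\vec e_2=\tfrac h2\vec x_{\rho\rho}(0,t_{m+1})\cdot\vec e_2+\mathcal O(h^3)$, while $\vec x_\rho(0,t_m)\cdot\vec x_{\rho\rho}(0,t_m)=0$ gives $|\delta^+\vec x^m_0|^2=|\vec x_\rho(0,t_{m+1})|^2+\mathcal O(h^2+\Delta t)$ and hence $\tfrac14 h|\delta^+\vec x^m_0|^2\,\tfrac{1}{\Delta t}(\vec x^{m+1}_0-\vec x^m_0)\cdot\vec e_2=\tfrac14 h|\vec x_\rho(0,t_{m+1})|^2\,\vec x_t(0,t_{m+1})\cdot\vec e_2+\mathcal O(h(h^2+\Delta t))$. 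The pole relation makes these two $\mathcal O(h)$ terms cancel, leaving $R^{m+1}_0=\mathcal O(h(h^2+\Delta t))$; $R^{m+1}_J$ is symmetric, and this explains the particular right-hand sides in \eqref{eq:bcl}, \eqref{eq:bcr}. I expect the endpoint analysis of the axis term — showing that the finite-difference errors pick up the correct powers of $\min\{q_j,1-q_j\}$, so as to absorb the degeneracy of $\vec x\cdot\vec e_1$ — to be the delicate step; the temporal and parabolic errors, and (given the pole identity) the boundary residuals, are comparatively routine Taylor expansions.
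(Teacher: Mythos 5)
Your proposal is correct and follows essentially the same route as the paper: the same decomposition of $\vec R^{m+1}_j$ into temporal, parabolic and axis errors, the same structural identities at the poles ($\vec x_{\rho\rho}\cdot\vec e_1=\vec x_\rho\cdot\vec x_{\rho\rho}=\vec x_{\rho\rho\rho}\cdot\vec e_2=0$ and the relation $\vec x_t\cdot\vec e_2=2\,\vec x_{\rho\rho}\cdot\vec e_2/|\vec x_\rho|^2$, which the paper establishes in Appendix~A), and the same quantitative mechanism whereby the central-difference and time-level errors in the numerator vanish like $\min\{q_j,1-q_j\}$ and so absorb the degeneracy of $\vec x\cdot\vec e_1\gtrsim q_j(1-q_j)$ in the denominator. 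The only cosmetic difference is that you linearise about $t_{m+1}$ where the paper uses $t_m$; the boundary-residual cancellation via the pole relation is identical.
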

\begin{proof} 
Simple Taylor expansions yield the well-known results
\begin{subequations}
\begin{alignat}{2}
\left| \frac{\vec x^{m+1}_j- \vec x^m_j}{\Delta t} - \vec x_t(q_j,t_m) \right|
& \leq  C \Delta t, \quad && 0 \leq j \leq J,\ 0 \leq m \leq M-1, 
\label{eq:consist5} \\
| \delta^- \vec x^m_j - \vec x_\rho(q_j,t_m)  | & \leq C h,
\quad && 1 \leq j \leq J,\ 0 \leq m \leq M,  \label{eq:consist1} \\
| \delta^1 \vec x^m_j - \vec x_\rho(q_j,t_m)  | +
| \delta^2 \vec x^m_j - \vec x_{\rho \rho}(q_j,t_m)  | & \leq C h^2,
\quad && 1 \leq j \leq J-1,\ 0 \leq m \leq M, 
 \label{eq:consist2} \\
\delta^1 \vec x^m_j  - \vec x_\rho(q_j,t_m) - 
\tfrac16 h^2 \vec x_{\rho \rho \rho}(q_j,t_m) & = \mathcal O(h^3),
\quad && 1 \leq j \leq J-1,\ 0 \leq m \leq M,
\label{eq:delta1x}
\end{alignat}
\end{subequations}
where we have observed that $\vec x(q_j,\cdot) \in C^2([0,T])$ and 
$\vec x(\cdot,t_m) \in C^4([0,1])$.
Evaluating (\ref{eq:xtnewflow}) at $(\rho,t)=(q_j,t_m)$, $j=1,\ldots,J-1$, 
$m=0,\ldots,M-1$, we find that
\begin{equation} \label{eq:consistt}
\vec x_t(q_j,t_m) = \frac{\vec x_{\rho \rho}(q_j,t_m)}{|\vec x_\rho (q_j,t_m)|^2} - \frac{1}{| \vec x_\rho (q_j,t_m) |^2} \,  \frac{ \vec x_\rho (q_j,t_m) \cdot \vec e_2}{\vec x^m_j \cdot \vec e_1} \,  
\vec x_\rho^\perp (q_j,t_m),
\end{equation}
where the assumed regularity of $\vec x$ allows us to use
\eqref{eq:xtnewflow} also at time $t=0$.
If we combine \eqref{eq:residual} with \eqref{eq:consistt}, and note
\eqref{eq:consist5} as well as \eqref{eq:consist2}, we obtain
\begin{align} \label{eq:consist1a}
| \vec  R^{m+1}_j | & \leq \left| \frac{\vec x^{m+1}_j- \vec x^m_j}{\Delta t}  - \vec x_t(q_j,t_m) \right| + \left| \frac{\delta^2 \vec x^{m+1}_j}{| \delta^1 \vec x^m_j |^2} -
\frac{\vec x_{\rho \rho}(q_j,t_m)}{|\vec x_\rho (q_j,t_m)|^2} \right| 
  \\ & \qquad 
+ \frac{ | \vec x_\rho(q_j,t_m) \cdot \vec e_2 |}{ \vec x^m_j \cdot \vec e_1} \, \left| \frac{(\delta^1 \vec x_j^m)^\perp}{| \delta^1 \vec x_j^m |^2} -
\frac{ \vec x^\perp_\rho(q_j,t_m)}{| \vec x_\rho(q_j,t_m) |^2} \right| 
\nonumber  \\ & \qquad 
+ \frac{1}{| \delta^1 \vec x^m_j |} 
\frac{ | \bigl( \delta^1 \vec x^{m+1}_j  - 
\vec x_\rho(q_j,t_m) \bigr) \cdot \vec e_2 |}{ \vec x^m_j \cdot \vec e_1} \nonumber \\
& \leq C \bigl( h^2 + \Delta t \bigr) +C \frac{ | \bigl( \delta^1 \vec x^{m+1}_j - 
\vec x_\rho(q_j,t_m) \bigr) \cdot \vec e_2 |}{ \vec x^m_j \cdot \vec e_1} .
\nonumber
\end{align}
In addition, it follows from \eqref{eq:delta1x}, (\ref{eq:bc3}), 
(\ref{eq:defM}) and (\ref{eq:xje1}) that
\begin{eqnarray*}
\lefteqn{
| \bigl( \delta^1 \vec x^{m+1}_j - \vec x_\rho(q_j,t_m) \bigr) \cdot \vec e_2 | } \\
& \leq & | \bigl( \delta^1 \vec x^{m+1}_j - \delta^1 \vec x^m_j \bigr) \cdot \vec e_2 | + | \bigl( \delta^1 \vec x^m -  \vec x_\rho(q_j,t_m) \bigr) \cdot \vec e_2 | \\
& \leq & \Delta t \sup_{t_m \leq t \leq t_{m+1}} | \delta^1 \vec x_t(q_j,t) \cdot \vec e_2 | +  C h^2 \min_{q \in \{0,1 \}} 
| \vec x_{\rho \rho \rho}(q_j,t_m) \cdot \vec e_2 
- \vec x_{\rho \rho \rho}(q,t_m) \cdot \vec e_2 | + C h^3 \\
& \leq & K \Delta t \sup_{t_m \leq t \leq t_{m+1}} \vec x(q_j,t) \cdot \vec e_1 + C h^2 \min_{q \in \{0,1 \}} | \vec x_{\rho \rho \rho}(q_j,t_m) - \vec x_{\rho \rho \rho}(q,t_m) | + C h^3 \\ 
& \leq &  C \Delta t q_j(1-q_j) +  C h^2 \min_{q \in \{0,1 \}} |q_j - q| + C h^3
\leq C q_j (1-q_j) \bigl(\Delta t +  h^2\bigr).
\end{eqnarray*}
If we insert this bound into (\ref{eq:consist1a}) and note that 
$\vec x^m_j \cdot \vec e_1 \geq c_2 q_j (1-q_j)$, $0 \leq j \leq J$, 
in view of (\ref{eq:xje1}), 
we obtain (\ref{eq:consist}) for $\vec R^{m+1}_j$, $j=1,\ldots,J-1$.
Let us next examine $R^{m+1}_0$. A Taylor expansion yields
\begin{displaymath}
\delta^+ \vec x^{m+1}_0 = \frac{\vec x^{m+1}_1 - \vec x^{m+1}_0}{h} = \vec x_\rho(0,t_{m+1}) + \tfrac12 h \vec x_{\rho \rho}(0,t_{m+1}) + \tfrac16 h^2
\vec x_{\rho \rho \rho}(0,t_{m+1}) + \mathcal O(h^3),
\end{displaymath}
which together with (\ref{eq:xtbc}), \eqref{eq:bc3}, 
$\vec x_{\rho \rho}(0,\cdot) \in C^1([0,T])$
and (\ref{eq:speedbd}) implies that
\begin{align*}
\delta^+ \vec x^{m+1}_0 \cdot \vec e_2 & =
 \tfrac12 h \vec x_{\rho \rho}(0,t_{m+1}) \cdot \vec e_2 + \mathcal{O}(h^3) =   \tfrac12 h \vec x_{\rho \rho}(0,t_m) \cdot \vec e_2 + \mathcal{O} \bigl( h (h^2 + \Delta t) \bigr)\\
& = \tfrac14 h (\vec x_t(0,t_m) \cdot \vec e_2) | \vec x_{\rho}(0,t_m) |^2 + \mathcal{O} \bigl( h(h^2+ \Delta t) \bigr) \\
& = \tfrac14 h \, \frac{\vec x^{m+1}_0 - \vec x^m_0}{\Delta t}  \cdot \vec e_2 |  \delta^+ \vec x^m_0 |^2 +  \mathcal{O} \bigl( h(h^2+ \Delta t) \bigr),
\end{align*}
where we have used (\ref{eq:consist5})
as well as
\begin{align*}
| \delta^+ \vec x^m_0 |^2 - | \vec x_{\rho}(0,t_m) |^2 & 
= \bigl( \delta^+ \vec x^m_0 - \vec x_{\rho}(0,t_m) \bigr) \cdot  \bigl( \delta^+ \vec x^m_0 + \vec x_{\rho}(0,t_m) \bigr) \\ & 
= \bigl( \tfrac12 h \vec x_{\rho \rho}(0,t_m) + \mathcal O(h^2) \bigr) \cdot \bigl( 2 \vec x_{\rho}(0,t_m) + \mathcal O(h) \bigr)  \\ &  
= h \vec x_{\rho \rho}(0,t_m) \cdot
\vec x_{\rho}(0,t_m) + \mathcal O(h^2)= \mathcal O(h^2),
\end{align*}
recall \eqref{eq:bc2}.
The bound for $R^{m+1}_J$ is obtained in a similar way.
\end{proof}

\begin{theorem} \label{thm:main} 
Suppose that $\vec x: [0,1] \times [0,T] \to \bR^2$ satisfies 
Assumption~\ref{ass:smooth}.
Then there exist $h_0>0, \gamma>0$ such that the discrete solution 
$(\vec X^m)_{m=1,\ldots,M}$ to \eqref{eq:fd} exists, and the error 
\begin{equation} \label{eq:defEj}
\vec E^m_j:= \vec x^m_j- \vec X^m_j, \quad j=0,\ldots,J;\ m=0, \ldots,M
\end{equation}
satisfies:
\begin{align} 
\max_{1 \leq m \leq M} \left[  \| \vec E^m \|_{1,h}^2  + \max_{0 \leq j \leq J} | \vec E^m_j |^2
\right] &\leq C \bigl( h^4 + (\Delta t)^2 \bigr), \label{eq:main} \\
 \Delta t \sum_{m=1}^M \left[
 | \vec E^m |_{2,h}^2 + \left| \frac{ \vec E^m -\vec E^{m-1}}{\Delta t} \right|_{0,h}^2 
\right] & \leq C \bigl( h^4 + (\Delta t)^2 \bigr), \label{eq:maindt} 
\end{align}
provided that $0 < h \leq h_0$ and $\Delta t \leq \gamma h$.
\end{theorem}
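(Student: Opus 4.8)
The proof is by induction on the time level $m$, combining a discrete energy estimate that mimics the formal calculation \eqref{eq:formal1}--\eqref{eq:formal2} with a bootstrap argument that keeps the nonlinear coefficients under control. I would first derive the error equation: subtracting \eqref{eq:findiv} from the identity defining $\vec R^{m+1}_j$ in \eqref{eq:residual}, and likewise \eqref{eq:bcl}--\eqref{eq:bcr} from \eqref{eq:residualb1}--\eqref{eq:residualb2}, gives a linear system for $\vec E^{m+1}$ with right-hand side $\vec R^{m+1}$ and coefficients frozen at $\vec X^m$. The differences $|\delta^1\vec X^m_j|^{-2}-|\delta^1\vec x^m_j|^{-2}$ and $(\vec X^m_j\cdot\vec e_1)^{-1}-(\vec x^m_j\cdot\vec e_1)^{-1}$ are expanded in terms of $\vec E^m$, so the equation takes the schematic form: discrete time derivative of $\vec E$ equals $|\delta^1\vec X^m|^{-2}\delta^2\vec E^{m+1}$ plus the degenerate lower-order term in $\vec E^{m+1}$ plus consistency and nonlinear remainder terms, with analogous discrete boundary relations for $\vec E^{m+1}\cdot\vec e_1$ and $\delta^\pm\vec E^{m+1}\cdot\vec e_2$ at $j=0,J$.

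The induction hypothesis is a suitable a priori smallness bound, e.g. $\|\vec E^m\|_{1,h}\le h^{3/4}$ for all $m\le m_0$. Via the inverse inequality \eqref{eq:inv1} this forces $\max_k|\delta^-\vec E^m_k|\le h^{1/4}$ to be small; combined with the consistency estimates \eqref{eq:consist1}--\eqref{eq:consist2} for the exact solution, the properties \eqref{eq:length}, \eqref{eq:leftright} transferred to the grid, and the bound $|\vec E^m_j\cdot\vec e_1|\le 2q_j(1-q_j)\max_k|\delta^-\vec E^m_k|$ from \eqref{eq:vje1} together with $\vec x^m_j\cdot\vec e_1\ge c_2\,q_j(1-q_j)$, this yields $\tfrac12 c_0\le|\delta^1\vec X^m_j|\le 2C_0$ and $\vec X^m_j\cdot\vec e_1\ge\tfrac12 c_2\,q_j(1-q_j)>0$ for $1\le j\le J-1$. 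In particular the linear system \eqref{eq:fd} at level $m_0$ is well posed — its homogeneous version admits only the trivial solution by the same energy estimate with zero data — which delivers existence of $\vec X^{m_0+1}$.

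The core step is the energy estimate, obtained by testing the error equation with $-h\,\delta^2\vec E^{m+1}_j$ and with the discrete time difference $h\,(\vec E^{m+1}_j-\vec E^m_j)/\Delta t$ (and with $h\,\vec E^{m+1}_j$) and combining, using the summation-by-parts formula \eqref{eq:sbp}; the boundary contributions are cancelled, up to higher-order consistency terms, precisely by the discrete boundary conditions \eqref{eq:bcl}--\eqref{eq:bcr}, which is the reason for their particular form. This produces on the left-hand side the increment $\tfrac12(\|\vec E^{m+1}\|_{1,h}^2-\|\vec E^m\|_{1,h}^2)$ together with the dissipation terms $c\,\Delta t\,|\vec E^{m+1}|_{2,h}^2$ and $\Delta t\,|(\vec E^{m+1}-\vec E^m)/\Delta t|_{0,h}^2$. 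The delicate contribution is the discrete analogue of $\frac{1}{|\delta^1\vec X^m|^2}\frac{\delta^1\vec E^{m+1}\cdot\vec e_2}{\vec X^m\cdot\vec e_1}(\delta^1\vec X^m)^\perp\cdot\delta^2\vec E^{m+1}$ near $j=0,J$, where $\vec X^m_j\cdot\vec e_1$ degenerates: here I would reproduce, at the grid level, the integration-by-parts manipulation of \eqref{eq:formal2} — this is the content of Lemma~\ref{lem:lemma2} — to split off a nonnegative term controlling a weighted discrete $L^2$-norm of $\frac{\delta^1\vec E^{m+1}\cdot\vec e_2}{\vec X^m\cdot\vec e_1}$ near the axis, modulo remainders absorbable into $c\,\Delta t\,|\vec E^{m+1}|_{2,h}^2$ and $C\,\Delta t\,\|\vec E^{m+1}\|_{1,h}^2$; on $[\delta,1-\delta]$ the term is harmless by \eqref{eq:lowbound}. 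The nonlinear remainder terms are bounded by $C\Delta t(\|\vec E^m\|_{1,h}^2+\|\vec E^{m+1}\|_{1,h}^2)+\tfrac14 c\,\Delta t\,|\vec E^{m+1}|_{2,h}^2+C\Delta t(h^2+\Delta t)^2$, using the induction hypothesis together with \eqref{eq:inv1}, \eqref{eq:dsi0}, \eqref{eq:dsi} and $\Delta t\le\gamma h$ to dominate the superlinear pieces (a stray factor $h^{-1/2}$ from an inverse estimate is beaten by the smallness of $\max_k|\delta^-\vec E^m_k|$).

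Summing over $m$, invoking the consistency bound \eqref{eq:consist}, and applying a discrete Gronwall inequality (legitimate once $\gamma$, hence $\Delta t$, is small enough to absorb the $C\Delta t\,\|\vec E^{m+1}\|_{1,h}^2$ term) yields the $\|\cdot\|_{1,h}$ part of \eqref{eq:main} and the full bound \eqref{eq:maindt}; the pointwise estimate $\max_j|\vec E^m_j|^2\le C(h^4+(\Delta t)^2)$ then follows from \eqref{eq:dsi0}. Finally, the induction closes because \eqref{eq:main} gives $\|\vec E^{m_0+1}\|_{1,h}\le C(h^2+\Delta t)\le C'h\le h^{3/4}$ once $h\le h_0$ with $h_0$ small, strictly improving the hypothesis. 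The principal obstacle is the discrete reproduction of \eqref{eq:formal2}: carrying out the summation-by-parts bookkeeping at the two endpoints with the non-symmetric boundary conditions \eqref{eq:bcl}--\eqref{eq:bcr} and controlling the $(\vec x\cdot\vec e_1)^{-2}$-weighted remainders — the remainder of the argument is a standard, if lengthy, parabolic finite-difference analysis.
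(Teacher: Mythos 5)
Your proposal follows essentially the same route as the paper: an induction/bootstrap in $m$ that keeps the discrete coefficients under control, existence via the homogeneous system, an energy estimate obtained by testing the error equation with a combination of $-h\,\delta^2\vec E^{m+1}_j$ and the discrete time difference, a discrete reproduction of \eqref{eq:formal2} for the degenerate term near the axis (the paper's Lemma~\ref{lem:lemma2}), and a discrete Gronwall argument to close the induction. The one point you understate is that the boundary conditions \eqref{eq:bcl}--\eqref{eq:bcr} do not merely cancel the endpoint terms from summation by parts: they must produce a strictly positive contribution (with coefficient close to $\tfrac94$, which is why the factor $\tfrac14$ appears in their definition) large enough to absorb the negative endpoint terms of size $\tfrac53\,h\,(\delta^{\pm}\vec E^{m+1}\cdot\vec e_2)^2/(\vec X^m\cdot\vec e_1)^2$ generated by the discrete integration by parts in Lemma~\ref{lem:lemma2} --- precisely the quantitative bookkeeping you correctly flag as the principal remaining obstacle.
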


\setcounter{equation}{0}
\section{Proof of Theorem \ref{thm:main}} \label{sec:proof}

Assumption~\ref{ass:smooth} assures the existence of positive constants
$c_0, C_0, c_1, \delta$ such that \eqref{eq:length} and \eqref{eq:leftright}
hold.
Let $h \leq \delta$.
We set $J_1:= \left \lfloor\frac{\delta}{h} \right\rfloor \in \bZ_{\geq1}$,
so that $q_{J_1} = J_1 h \in [\frac12\delta,\delta]$. We shall prove Theorem \ref{thm:main} with the help of an induction argument. 
In particular, we will prove that there exist $h_0 >0$, $0< \gamma \leq 1$ and 
$\mu>0$ such that if $0<h \leq h_0$ and $\Delta t \leq \gamma h$, 
then for $m \in \{ 0,\ldots,M \}$ the discrete solution $\vec X^m$ exists 
and satisfies
\begin{equation} \label{eq:indass}
\| \vec E^m \|_{1,h}^2  \leq \bigl( h^4 + (\Delta t)^2 \bigr) e^{\mu t_m}.
\end{equation}
The assertion \eqref{eq:indass} clearly holds for $m=0$,
for arbitrary $h_0 \leq \delta$, $0 < \gamma \leq 1$ and $\mu>0$.
On assuming that \eqref{eq:indass} holds for a fixed 
$m \in \{ 0,\ldots,M-1 \}$, we will now show that it also holds for $m+1$.

To begin, let us choose $0<h_0\leq\delta$ and $0< \gamma \leq 1$ so small that 
\begin{equation*} % \label{eq:h0}
(h_0^2 + \gamma^2) e^{\mu T} \leq 1.
\end{equation*}
Then, since $\Delta t \leq \gamma h$, (\ref{eq:indass}) implies that 
\begin{equation*} % \label{eq:indassm}
\|  \vec E^m \|_{1,h}^2  \leq h^2 ( h^2 + \gamma^2) e^{\mu t_m} \leq h^{2}, \qquad 0<h \leq h_0.
\end{equation*}
In particular, we infer from Lemma~\ref{lem:dsi} that
\begin{equation} \label{eq:maxest}
\max_{0 \leq j \leq J} | \vec E^m_j | 
+ \max_{1 \leq j \leq J} | \delta^- \vec E^m_j | 
+ \max_{1 \leq j \leq J-1} | \delta^1 \vec E^m_j| \leq C h^{\frac12}.
\end{equation}
This implies for $1 \leq j \leq  J$, on recalling 
\eqref{eq:consist1} and \eqref{eq:length}, that
\[
| \delta^- \vec X^m_j | \leq | \delta^- \vec x^m_j | + | \delta^- \vec E^m_j | 
\leq | \vec x_\rho(q_j,t_m) | + C h^{\frac12} 
\leq C_0 + C h^{\frac12},
\]
and similarly $| \delta^- \vec X^m_j | \geq c_0 - Ch^{\frac12}$. 
Arguing in the same way for $\delta^1 \vec X^m_j$, we infer that
\begin{equation} \label{eq:lengthh}
\tfrac12 c_0 \leq | \delta^- \vec X^m_j | \leq 2 C_0, \; 1 \leq j \leq J;  \quad \tfrac12 c_0 \leq  | \delta^1 \vec X^m_j |  \leq 2 C_0, \;
1 \leq j  \leq J-1,
\end{equation}
provided that $0<h \leq h_0$ and $h_0>0$ is chosen smaller if necessary. A similar argument together with (\ref{eq:leftbd}), (\ref{eq:rightbd}) shows that
\begin{equation}  \label{eq:xrhoe1}
\delta^- \vec X^m_j \cdot \vec e_1 \geq \tfrac14 c_0, \quad 1 \leq j \leq J_1; \qquad \delta^- \vec X^m_j \cdot \vec e_1  \leq - \tfrac14 c_0, \quad
J-J_1 \leq j \leq J.
\end{equation}
Next, since $\vec x_\rho(0,t_m) \cdot \vec e_2=0$, recall \eqref{eq:xtbc}, 
we have from \eqref{eq:maxest} and \eqref{eq:bcl} that 
\begin{align*}
h \, | \delta^- \vec X^m_1 | & % = h \, | \delta^+ \vec X_0(\hat T_h) |
\leq h \, | \delta^- \vec x^m_1 | 
+ h \, | \delta^- \vec E^m_1 | %\\ &
\leq h \, \delta^- \vec x^m_1 \cdot \vec e_1 + h \, 
| \delta^- \vec x^m_1 \cdot \vec e_2 | + Ch^{\frac32} \\ & 
\leq h \, \delta^- \vec X^m_1 \cdot \vec e_1 + C h^{\frac32} 
= \vec X^m_1 \cdot \vec e_1 + Ch^{\frac32} \leq (1+ C h^{\frac12}) \vec x^m_1 \cdot \vec e_1 + Ch^{\frac32},
\end{align*}
where in the last step we have observed that
\[
\vec X^m_1 \cdot \vec e_1 - \vec x^m_1 \cdot \vec e_1 
= -h \delta^- \vec E^m_1 \cdot \vec e_1 
\leq C h^{\frac32} \leq Ch^{\frac12} \vec x^m_1 \cdot \vec e_1,
\]
on noting $\vec x^m_1 \cdot \vec e_1 \geq \tfrac12 c_0 h$.
Arguing in the same way at the right boundary, we obtain
\begin{equation} \label{eq:bdh}
\tfrac34 h \, | \delta^- \vec X^m_1 | \leq \vec X^m_1 \cdot \vec e_1 \leq \tfrac43 \vec x^m_1 \cdot \vec e_1; \qquad 
\tfrac34 h | \delta^- \vec X^m_J |  \leq \vec X^m_{J-1} \cdot \vec e_1 
\leq \tfrac43 \vec x^m_{J-1} \cdot \vec e_1,
\end{equation}
for a possibly smaller $h_0>0$. 
Next, (\ref{eq:maxest}), \eqref{eq:lowbound} and the fact that $J_1h \geq \tfrac12\delta$
imply that 
\begin{equation} \label{eq:xmje1}
\vec X^m_j \cdot \vec e_1 \geq c_1 - Ch^{\frac12} \geq \tfrac12 c_1, 
\quad J_1 \leq j \leq J-J_1,
\end{equation}
after choosing $h_0$ again smaller if required. In addition, there exists $c_3>0$ such that
\begin{equation}  \label{eq:Xje1}
\vec X^m_j \cdot \vec e_1 \geq c_3 q_j (1-q_j), \quad 0 \leq j \leq J.
\end{equation}
To see this, note that
$\vec X^m_0 \cdot \vec e_1 = 0$ and (\ref{eq:xrhoe1}) imply that
\begin{subequations} % \label{eq:xXjlower}
\begin{equation} \label{eq:xXjlowerleft}
\vec X^m_j \cdot \vec e_1 \geq \tfrac14 c_0 jh \geq \tfrac14 c_0 q_j (1- q_j), \quad 0 \leq j \leq J_1, % \text{ or } J-J_1 \leq j \leq J-1.
\end{equation}
and similarly 
\begin{equation} \label{eq:xXjlowerright}
\vec X^m_j \cdot \vec e_1 \geq \tfrac14 c_0 q_j (1 - q_j),\quad
J-J_1 \leq j \leq J.
\end{equation}
\end{subequations}
Combining these estimates with (\ref{eq:xmje1}) proves the bound 
(\ref{eq:Xje1}). % follows with $c_3=\min(\tfrac14 c_0, \tfrac12 c_1)$. 
If we combine (\ref{eq:Xje1}) with (\ref{eq:vje1}) and (\ref{eq:lengthh}), 
we obtain %, for $j = 1,\ldots,J-1$, that
\begin{equation} \label{eq:quotbd}
\frac{\vec X^m_{j \pm 1} \cdot \vec e_1}{\vec X^m_j \cdot e_1} \leq \frac{4C_0}{c_3} \frac{q_{j \pm 1}(1-q_{j \pm 1})}{q_j (1-q_j)} \leq \frac{8 C_0}{c_3}. 
\quad 1 \leq j \leq J-1.
\end{equation}
Finally,  (\ref{eq:length}) and (\ref{eq:lengthh}) imply that
\begin{subequations}
\begin{alignat}{2}
\left| \frac{1}{| \delta^- \vec x^m_j |^2} - \frac{1}{| \delta^- \vec X^m_j |^2} \right| & \leq C | \delta^- \vec E^m_j |,
&& \quad 1 \leq j \leq J, \label{eq:esta-} \\
\left| \frac{1}{| \delta^1 \vec x^m_j |^2} - \frac{1}{| \delta^1 \vec X^m_j |^2} \right| & \leq C | \delta^1 \vec E^m_j |,
&&\quad 1 \leq j \leq J-1. \label{eq:esta}
\end{alignat}
\end{subequations}

\begin{lemma} [Existence and uniqueness] \label{lem:ex}
Let $\vec X^m: \mathcal G_h \rightarrow \bR^2$ be as above. 
Then \eqref{eq:fd} has a unique solution $\vec X^{m+1}: \mathcal G_h \rightarrow \bR^2$, provided that $h_0$ is small enough.
\end{lemma}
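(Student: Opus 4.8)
The starting point is that \eqref{eq:fd} is a square linear system for $\vec X^{m+1}\colon\mathcal G_h\to\bR^2$ — there are $2(J+1)$ scalar unknowns and exactly $2(J-1)$ scalar equations \eqref{eq:findiv} together with the four scalar relations in \eqref{eq:bcl}, \eqref{eq:bcr} — so existence is equivalent to uniqueness. Hence I would fix a solution $\vec Y$ of the associated homogeneous system, i.e.\ a grid function with $\vec Y_j/\Delta t=\delta^2\vec Y_j/|\delta^1\vec X^m_j|^2-|\delta^1\vec X^m_j|^{-2}(\vec X^m_j\cdot\vec e_1)^{-1}(\delta^1\vec Y_j\cdot\vec e_2)(\delta^1\vec X^m_j)^\perp$ for $1\le j\le J-1$, $\vec Y_0\cdot\vec e_1=\vec Y_J\cdot\vec e_1=0$, $\delta^+\vec Y_0\cdot\vec e_2=\tfrac14 h|\delta^+\vec X^m_0|^2(\Delta t)^{-1}\vec Y_0\cdot\vec e_2$ and $\delta^-\vec Y_J\cdot\vec e_2=-\tfrac14 h|\delta^-\vec X^m_J|^2(\Delta t)^{-1}\vec Y_J\cdot\vec e_2$, and aim to prove $|\vec Y|_{1,h}=0$. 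Once that is known, $\vec Y$ is constant, and substituting $\delta^1\vec Y_j=\delta^2\vec Y_j=\vec 0$ into the interior equation gives $\vec Y_j=\vec 0$ for $1\le j\le J-1$, hence $\vec Y\equiv\vec 0$.

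To obtain $|\vec Y|_{1,h}=0$ the plan is to reproduce, in this simplified homogeneous setting, the discrete analogue of the formal computation \eqref{eq:formal1}--\eqref{eq:formal2} (the same mechanism governs the forthcoming Lemma~\ref{lem:lemma2}): multiply the homogeneous interior equation by $-h\,\delta^2\vec Y_j$, sum over $j=1,\dots,J-1$, and apply the summation by parts formula \eqref{eq:sbp} with $\vec v=\vec w=\vec Y$. Since $\vec Y_0\cdot\vec e_1=\vec Y_J\cdot\vec e_1=0$, the homogeneous forms of \eqref{eq:bcl}, \eqref{eq:bcr} turn the arising terms $\vec Y_0\cdot\delta^+\vec Y_0$ and $-\vec Y_J\cdot\delta^-\vec Y_J$ into the nonnegative quantities $\tfrac14 h(\Delta t)^{-1}|\delta^+\vec X^m_0|^2(\vec Y_0\cdot\vec e_2)^2$ and $\tfrac14 h(\Delta t)^{-1}|\delta^-\vec X^m_J|^2(\vec Y_J\cdot\vec e_2)^2$ — this nonnegativity is exactly what the form chosen in \eqref{eq:bcl}, \eqref{eq:bcr} delivers. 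One is thereby left with an identity equating $\tfrac1{\Delta t}|\vec Y|_{1,h}^2$, plus these two nonnegative boundary contributions (now carrying an extra factor $\tfrac1{\Delta t}$), plus the diffusion term $h\sum_{j=1}^{J-1}|\delta^2\vec Y_j|^2/|\delta^1\vec X^m_j|^2$, to the single curvature sum $\mathcal C:=h\sum_{j=1}^{J-1}|\delta^1\vec X^m_j|^{-2}\,(\vec X^m_j\cdot\vec e_1)^{-1}\,(\delta^1\vec Y_j\cdot\vec e_2)\,(\delta^1\vec X^m_j)^\perp\cdot\delta^2\vec Y_j$, and the whole task reduces to estimating $\mathcal C$. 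On the middle range $J_1<j<J-J_1$ we have $\vec X^m_j\cdot\vec e_1\ge\tfrac12 c_1$ by \eqref{eq:xmje1}, so \eqref{eq:lengthh} and Young's inequality bound that part of $\mathcal C$ by $\epsilon\,h\sum|\delta^2\vec Y_j|^2/|\delta^1\vec X^m_j|^2+C_\epsilon|\vec Y|_{1,h}^2$, the first term being absorbed into the diffusion term.

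The main obstacle is the two boundary blocks $1\le j\le J_1$ and $J-J_1\le j\le J-1$, where $\vec X^m_j\cdot\vec e_1$ degenerates like $h$ — precisely the feature that, as noted in the introduction, blocks the genus-$1$ analysis from carrying over. Here I would imitate \eqref{eq:formal2}: on the left block, by \eqref{eq:xrhoe1} the vector $(\delta^1\vec X^m_j)^\perp$ equals $-|\delta^1\vec X^m_j|\,\vec e_2$ up to a contribution handled as on the middle range, so, writing $w_j:=\vec Y_j\cdot\vec e_2$ and using the identity $\delta^1 w_j\,\delta^2 w_j=\tfrac12\,\delta^+\!\bigl((\delta^- w_j)^2\bigr)$, the leading part of this block becomes $-\tfrac12 h\sum_{j=1}^{J_1}\bigl(|\delta^1\vec X^m_j|\,\vec X^m_j\cdot\vec e_1\bigr)^{-1}\delta^+\!\bigl((\delta^- w_j)^2\bigr)$. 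A further discrete summation by parts then yields three pieces: a \emph{nonnegative} term, the discrete counterpart of the $L^2$-control of $\frac{\vec x_\rho\cdot\vec e_2}{\vec x\cdot\vec e_1}$ obtained in \eqref{eq:formal2}, which is kept on the left-hand side; an interior remainder which, because $\vec X^m_j\cdot\vec e_1$ is comparable to $q_j$ for $1\le j\le J_1$ by \eqref{eq:xrhoe1}, \eqref{eq:Xje1}, is absorbed after first shrinking the fixed length $\delta$ of \eqref{eq:leftright}; and a boundary term at $j=1$ which, using $\vec X^m_1\cdot\vec e_1=h\,\delta^+\vec X^m_0\cdot\vec e_1$, the homogeneous form of \eqref{eq:bcl}, and \eqref{eq:maxest} together with $\vec x_\rho(0,t_m)\cdot\vec e_2=0$, turns out to be a constant multiple — bounded, and strictly below $1$ once $h_0$ is small, thanks to the $\mathcal O(h)$ scaling of the coefficient in \eqref{eq:bcl} — of the left-hand side boundary quantity $\tfrac14 h(\Delta t)^{-2}|\delta^+\vec X^m_0|^2(\vec Y_0\cdot\vec e_2)^2$, hence absorbed there. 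The right block is symmetric, via \eqref{eq:rightbd}, \eqref{eq:bcr}. Collecting everything yields $(\Delta t)^{-1}|\vec Y|_{1,h}^2\le C|\vec Y|_{1,h}^2$ with $C$ independent of $h$ and $\Delta t$; since $\Delta t\le\gamma h\le\gamma h_0$, choosing $h_0$ so small that $\gamma h_0 C<1$ forces $|\vec Y|_{1,h}=0$, and the proof concludes as in the first paragraph. The genuine difficulty lies entirely in reproducing the degenerate estimate \eqref{eq:formal2} at the discrete level in these two blocks, which is also what makes the proof of Lemma~\ref{lem:lemma2} delicate.
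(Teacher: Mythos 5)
Your overall strategy is the paper's: reduce to the homogeneous system (a square linear system, so uniqueness implies existence), test with $-h\,\delta^2\vec Y_j$, sum by parts via \eqref{eq:sbp}, use the sign structure built into \eqref{eq:bcl}, \eqref{eq:bcr} to generate a positive boundary quantity that absorbs the boundary term produced by the degenerate block (your ratio computation is the same mechanism as the paper's comparison of $\tfrac53$ against $\tfrac94$ via \eqref{eq:bdh}), and control the curvature sum by a discrete version of \eqref{eq:formal2} near the axis. The paper organises that last step by splitting the right-hand side into a coefficient-perturbation term $\vec S^1_j$ and a term $\vec S^2_j$ which is literally $-\vec T^{m,3}_j$ with $\vec E^{m+1}$ replaced by the homogeneous solution, so that Lemma~\ref{lem:lemma2} can be invoked verbatim; you redo the block estimate from scratch, which is fine in principle. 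Your way of finishing ($\vec Y$ constant, then the interior equation forces $\vec Y_j=\vec 0$) is a harmless variant of the paper's (which uses the boundary condition instead).

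There is, however, one concrete gap in your treatment of the boundary blocks. You carry out the discrete summation by parts with the weight $\bigl(|\delta^1\vec X^m_j|\,\vec X^m_j\cdot\vec e_1\bigr)^{-1}$, i.e.\ with the length element of the \emph{discrete} solution. The interior remainder then contains the differences $|\delta^1\vec X^m_{j+1}|^{-1}-|\delta^1\vec X^m_j|^{-1}$, and the induction hypothesis only gives the pointwise bound $|\delta^1\vec E^m_j|\leq C h^{1/2}$ from \eqref{eq:maxest}; hence these differences are merely $\mathcal O(h^{1/2})$, not $\mathcal O(h)$. Dividing by $\vec X^m_j\cdot\vec e_1$, which is of order $q_j$ and thus of order $h$ at $j=1$, and absorbing what one can into the positive $L^2$-control term by Young's inequality, leaves a contribution of size $C h^{-1}|\vec Y|_{1,h}^2$ rather than the $C\,|\vec Y|_{1,h}^2$ you claim. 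This cannot be dominated by $\tfrac1{\Delta t}|\vec Y|_{1,h}^2\geq\tfrac1{\gamma h}|\vec Y|_{1,h}^2$ by shrinking $h_0$ alone — one would need a smallness condition on $\gamma$ — and shrinking $\delta$ does not help, since the problematic terms sit at $j$ close to $1$, not near $j\approx J_1$. The paper's remedy is the preliminary split: first replace $|\delta^1\vec X^m_j|^{-2}(\delta^1\vec X^m_j)^\perp$ by $|\delta^1\vec x^m_j|^{-2}(\delta^1\vec x^m_j)^\perp$ (the term $\vec S^1_j$, estimated via \eqref{eq:esta}, \eqref{eq:maxest} and Young's inequality against the $|\cdot|_{2,h}$ seminorm, which leaves only a harmless $C h\cdot h\sum_j(\delta^1\vec Y_j\cdot\vec e_2)^2/(\vec X^m_j\cdot\vec e_1)^2$), and only then sum by parts with the smooth weight $|\delta^1\vec x^m_j|^{-1}$, whose consecutive differences are genuinely $\mathcal O(h)$ by the regularity of $\vec x$. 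With that modification your argument closes exactly as you describe, with an $h$-independent constant and a smallness condition on $h_0$ only, as in the statement of the lemma.
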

\begin{proof}
The relations (\ref{eq:fd}) form a linear system with $2(J+1)$ unknowns for the $2(J+1)$ values of $\vec X^{m+1}$ at the nodes $q_j$, $j=0,\ldots,J$. It is 
therefore sufficient to show that the corresponding homogeneous system
\begin{subequations} % \label{eq:ex1}
\begin{align}
& \frac{1}{\Delta t} \vec X_j - \frac{1}{| \delta^1 \vec X^m_j |^2} \delta^2 \vec X_j =- \frac{1}{| \delta^1 \vec X^m_j |^2} \frac{\delta^1 \vec X_j \cdot \vec e_2}{\vec X^m_j \cdot \vec e_1}
(\delta^1 \vec X^m_j)^\perp,  \quad j=1,\ldots,J-1;  \label{eq:ex1a} \\
& \vec X_0 \cdot \vec e_1 = 0; \quad 
\delta^+ \vec X_0 \cdot \vec e_2 = \tfrac14 \frac{h}{\Delta t} 
(\vec X_0 \cdot \vec e_2) \, | \delta^+ \vec X^m_0 |^2; \label{eq:ex1b} \\
& \vec X_J \cdot \vec e_1  = 0; \quad 
\delta^- \vec X_J \cdot \vec e_2 = -\tfrac14 \frac{h}{\Delta t} 
(\vec X_J \cdot \vec e_2) \, | \delta^- \vec X^m_J |^2 \label{eq:ex1c}
\end{align}
\end{subequations}
only has the trivial solution $\vec X=\vec 0$. If we multiply (\ref{eq:ex1a}) 
with $- h \delta^2 \vec X_j$ and sum from $j=1,\ldots,J-1$ we 
obtain with the help of (\ref{eq:sbp}) that
\begin{align*}
& \frac{1}{\Delta t} | \vec X |_{1,h}^2 + \frac{1}{\Delta t} \bigl( \vec X_0 \cdot \delta^+ \vec X_0 - \vec X_J \cdot \delta^- \vec X_J \bigr) +
h \sum_{j=1}^{J-1} \frac{1}{| \delta^1 \vec X^m_j |^2} | \delta^2 \vec X_j |^2 \\ & \quad
= h \sum_{j=1}^{J-1} \frac{1}{| \delta^1 \vec X^m_j |^2} \frac{\delta^1 \vec X_j \cdot \vec e_2}{\vec X^m_j \cdot \vec e_1} (\delta^1 \vec X^m_j)^\perp \cdot \delta^2 \vec X_j.
\end{align*}
In view of (\ref{eq:ex1b}) and (\ref{eq:bdh}) we have
\begin{align*}
\frac{1}{\Delta t} \vec X_0 \cdot \delta^+ \vec X_0 = \frac{1}{\Delta t}   (\vec X_0 \cdot \vec e_2) (\delta^+ \vec X_0 \cdot \vec e_2) = 
\frac{4}{h} \frac{(\delta^+ \vec X_0 \cdot \vec e_2)^2}{| \delta^+ \vec X^m_0 |^2} \geq \tfrac94 h \frac{(\delta^- \vec X_1 \cdot \vec e_2)^2}{( \vec X^m_1 \cdot \vec e_1)^2}.
\end{align*}
Using a similar argument at the right end point, as well as \eqref{eq:lengthh}, we deduce 
\begin{align}
& \frac{1}{\Delta t} | \vec X |_{1,h}^2 + \frac{1}{4 C_0^2} | \vec X |_{2,h}^2 + \tfrac94 h \Bigl(  \frac{(\delta^- \vec X_1 \cdot \vec e_2)^2}{( \vec X^m_1 \cdot \vec e_1)^2}
+ \frac{(\delta^+ \vec X_{J-1} \cdot \vec e_2)^2}{( \vec X^m_{J-1} \cdot \vec e_1)^2} \Bigr) \label{eq:ex1d} \\ & \quad 
\leq h \sum_{j=1}^{J-1} \frac{\delta^1 \vec X_j \cdot \vec e_2}{\vec X^m_j \cdot \vec e_1} \Bigl( \frac{(\delta^1 \vec X^m_j)^\perp}{| \delta^1 \vec X^m_j |^2}  
- \frac{(\delta^1 \vec x^m_j)^\perp}{| \delta^1 \vec x^m_j |^2} \Bigr) \cdot \delta^2 \vec X_j \nonumber \\ & \qquad
+ h \sum_{j=1}^{J-1} \frac{1}{| \delta^1 \vec x^m_j |^2} 
\frac{\delta^1 \vec X_j \cdot \vec e_2}{\vec X^m_j \cdot \vec e_1} (\delta^1 \vec x^m_j)^\perp \cdot \delta^2 \vec X_j  \nonumber \\ & \quad
=: h \sum_{j=1}^{J-1} \vec S^1_j \cdot \delta^2 \vec X_j + h \sum_{j=1}^{J-1} \vec S^2_j \cdot \delta^2 \vec X_j . \nonumber
\end{align}
Using (\ref{eq:esta}) and (\ref{eq:maxest}) we infer that
\begin{displaymath}
| \vec S^1_j | \leq C \frac{| \delta^1 \vec X_j \cdot \vec e_2 |}{ \vec X^m_j \cdot \vec e_1} \, | \delta^1 \vec E^m_j | \, | \delta^2 \vec X_j |
\leq  C h^{\frac{1}{2}} \frac{| \delta^1 \vec X_j \cdot \vec e_2 |}{ \vec X^m_j \cdot \vec e_1} \, | \delta^2 \vec X_j |
\end{displaymath}
and hence
\begin{displaymath}
h \sum_{j=1}^{J-1} \vec S^1_j \cdot \delta^2 \vec X_j
 \leq \frac{1}{8 C_0^2} | \vec X |_{2,h}^2 + C h^2 \sum_{j=1}^{J-1} \frac{( \delta^1 \vec X_j \cdot \vec e_2)^2}{( \vec X^m_j \cdot \vec e_1)^2}.
\end{displaymath}
The term $\vec S^2_j$ corresponds exactly to $- \vec T^{m,3}_j$ in (\ref{eq:err1}) below, if we replace $\vec E^{m+1}$ by $\vec X$. We may therefore
deduce from  Lemma \ref{lem:lemma2} that
\begin{align*}
 h \sum_{j=1}^{J-1} \vec S^2_j \cdot \delta^2 \vec X_j & \leq - c_4 h \, \sum_{j=1}^{J-1} \frac{ ( \delta^1 \vec X_j  \cdot \vec e_2)^2}
{(\vec X^m_j \cdot \vec e_1)^2}
+ \tfrac53 h \left( \frac{( \delta^- \vec X_1  \cdot \vec e_2)^2}{(\vec X^m_1 \cdot \vec e_1)^2} + 
\frac{( \delta^+ \vec X_{J-1}  \cdot \vec e_2)^2}{(\vec X^m_{J-1} \cdot \vec e_1)^2} \right) 
\nonumber \\ & \qquad
+ \frac{1}{8 C_0^2}  | \vec X |_{2,h}^2 + C \, | \vec X |_{1,h}^2.
\nonumber
\end{align*}
If we insert the above bounds into (\ref{eq:ex1d}) and recall that $\Delta t \leq \gamma h \leq h$ we infer that
\begin{displaymath}
\bigl( \frac{1}{h} - C \bigr) | \vec X |_{1,h}^2 + (c_4 - Ch) h  \sum_{j=1}^{J-1} \frac{ ( \delta^1 \vec X_j  \cdot \vec e_2)^2}
{(\vec X^m_j \cdot \vec e_1)^2} \leq 0,
\end{displaymath}
which implies that $\vec X \equiv \vec X_0$ provided that $0< h \leq h_0$, where $h_0$ is chosen smaller if necessary. 
The boundary conditions (\ref{eq:ex1b}), on noting \eqref{eq:lengthh}, then
yield $\vec X \equiv 0$.
\end{proof}
 
We begin our error analysis by 
combining \eqref{eq:defEj}, (\ref{eq:findiv}) and 
(\ref{eq:residual}), in order to derive the following error relation:
\begin{align}  \label{eq:err1}
 & \frac{ \vec E^{m+1}_j - \vec E^m_j}{\Delta t}   
- \frac{\delta^2 \vec E^{m+1}_j}{| \delta^1 \vec X^m_j |^2} 
= \Bigl( \frac{1}{| \delta^1 \vec x^m_j |^2}- \frac{1}{| \delta^1 \vec X^m_j |^2} \Bigr)
\delta^2 \vec x^{m+1}_j  \\ & \qquad
+ \frac{\delta^1 \vec X^{m+1}_j \cdot \vec e_2}{\vec X^m_j \cdot \vec  e_1} \left[ \Bigl( \frac{1}{| \delta^1 \vec X^m_j |^2} - \frac{1}{| \delta^1 \vec x^m_j |^2} \Bigr)
\; (\delta^1 \vec X_j^m)^\perp
 - \frac{1}{| \delta^1 \vec x^m_j |^2} \; (\delta^1 \vec E_j^m)^\perp \right] 
\nonumber \\ & \qquad
- \frac{1}{| \delta^1 \vec x^m_j |^2} \, \frac{\delta^1 \vec E^{m+1}_j \cdot \vec e_2 }{\vec X^m_j \cdot \vec  e_1} \; (\delta^1 \vec x_j^m)^\perp \nonumber \\ & \qquad
+ \frac{1}{| \delta^1 \vec x^m_j |^2 } \, 
\Bigl( \frac{1}{\vec X^m_j \cdot \vec e_1} - \frac{1}{\vec x^m_j \cdot \vec e_1} 
\Bigr) \; ( \delta^1 \vec x^{m+1}_j \cdot \vec e_2) (\delta^1 \vec x_j^m)^\perp 
 + \vec R^{m+1}_j \nonumber \\ & \quad
=: \sum_{i=1}^{5} \vec T^{m,i}_j,  
\qquad 1 \leq j \leq J-1. \nonumber
\end{align}
Furthermore, for the boundary points we have in view of 
\eqref{eq:xtbc}, \eqref{eq:bcl}, \eqref{eq:bcr}, \eqref{eq:residualb1} 
and \eqref{eq:residualb2} that
\begin{subequations} %\label{eq:err1lr}
\begin{align}
& (  \vec E^{m+1}_0  - \vec E^m_0) \cdot \vec e_1  = (  \vec E^{m+1}_J - \vec E^m_J) \cdot \vec e_1 =0,  \label{eq:dtE0} \\
& \frac{\vec E^{m+1}_0 - \vec E^m_0}{\Delta t}  = \frac{4}{h} 
\frac{\delta^+ \vec E^{m+1}_0 \cdot \vec e_2 - R^{m+1}_0}{| \delta^+ \vec X^m_0 |^2}\vec e_2 
+ \Bigl( 1 - \frac{| \delta^+ \vec x^m_0 |^2}{| \delta^+ \vec X^m_0|^2} \Bigr) 
\frac{\vec x^{m+1}_0 - \vec x^m_0}{\Delta t} 
, \label{eq:err1l} \\
& \frac{\vec E^{m+1}_J - \vec E^m_J}{\Delta t}  = -\frac{4}{h} 
\frac{\delta^- \vec E^{m+1}_J \cdot \vec e_2 - R^{m+1}_J}{| \delta^- \vec X^m_J |^2} \vec e_2
+ \Bigl( 1 - \frac{| \delta^- \vec x^m_J |^2}{| \delta^- \vec X^m_J|^2} \Bigr) 
\frac{ \vec x^{m+1}_J - \vec x^m_J}{\Delta t} 
. \label{eq:err1r} 
\end{align}
\end{subequations}

Our strategy for the proof of \eqref{eq:indass} with $m$ replaced by $m+1$
is now as follows. In a
discrete analogue to the formal procedure in \eqref{eq:formal1}, we are going
to multiply \eqref{eq:err1} with a second order difference of the error 
$\vec E^{m+1}$. The ensuing analysis is technical, and so we split it into
three steps. In a first step, we control the terms generated on the left hand
side of \eqref{eq:err1}, in order to obtain Lemma~\ref{lem:lemma1a}. Next we
estimate four of the five terms generated by the right hand side of 
\eqref{eq:err1}, see Lemma~\ref{lem:lemma1b}. The remaining term,
which is generated by $\vec T^{m,3}$ and loosely corresponds to the last 
integral in \eqref{eq:formal1}, requires a particularly careful analysis.
We present the derived estimate in Lemma~\ref{lem:lemma2}, where in the proof 
we will mimic the formal calulations from \eqref{eq:formal2}. 

The induction step is then completed by combining the three lemmas.

\begin{lemma} \label{lem:lemma1a} 
There exists $C_1 > 0$ such that for all $0 < \lambda \leq 1$
\begin{align}\label{eq:err1a}
& \frac{1+\lambda}{2 \Delta t} \bigl( | \vec E^{m+1} |_{1,h}^2 - | \vec E^m |_{1,h}^2 \bigr) + \frac{1}{2 \Delta t} | \vec E^{m+1} - \vec E^m |_{1,h}^2 + \frac{1}{4 C_0^2} | \vec E^{m+1} |_{2,h}^2  \\ & \qquad 
+ \tfrac14 c_0^2 \lambda \, 
\left|  \frac{\vec E^{m+1}- \vec E^m}{\Delta t} \right|^2_{0,h} 
+ (2 - C_1 \lambda )h \left[ \frac{\bigl( \delta^- \vec E^{m+1}_1 \cdot \vec e_2 \bigr)^2}{( \vec X^m_1 \cdot \vec e_1)^2} +  \frac{\bigl( \delta^+ \vec E^{m+1}_{J-1} \cdot \vec e_2 \bigr)^2}{( \vec X^m_{J-1} \cdot \vec e_1)^2}  \right] 
\nonumber \\ & \quad 
\leq C h \bigl( h^4 + (\Delta t)^2 \bigr) + C | \vec E^m |_{1,h}^2
\nonumber \\ & \qquad \quad
+ h \sum_{i=1}^{5} \sum_{j=1}^{J-1} \vec T^{m,i}_j  \cdot \bigl( \lambda | \delta^1 \vec X^m_j |^2  \frac{\vec E^{m+1}_j- \vec E^m_j}{\Delta t} - \delta^2 \vec E^{m+1}_j \bigr). \nonumber
\end{align}
\end{lemma}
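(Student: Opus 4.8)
The plan is to carry out the discrete analogue of the formal energy estimate \eqref{eq:formal1}, testing the interior error relation \eqref{eq:err1} against the combination $\lambda | \delta^1 \vec X^m_j |^2 \frac{\vec E^{m+1}_j - \vec E^m_j}{\Delta t} - \delta^2 \vec E^{m+1}_j$. First I would multiply \eqref{eq:err1} by $-h\,\delta^2 \vec E^{m+1}_j$ and sum over $j=1,\ldots,J-1$, applying the summation-by-parts formula \eqref{eq:sbp} to the Laplacian term; this produces $\frac{1}{\Delta t}(\ldots) |\vec E^{m+1}|_{1,h}^2$-type contributions together with boundary terms involving $\vec E^{m+1}_0 \cdot \delta^+ \vec E^{m+1}_0$ and $\vec E^{m+1}_J \cdot \delta^- \vec E^{m+1}_J$, plus the coercive term $h\sum_j \frac{|\delta^2 \vec E^{m+1}_j|^2}{|\delta^1 \vec X^m_j|^2} \geq \frac{1}{4C_0^2} |\vec E^{m+1}|_{2,h}^2$ after invoking \eqref{eq:lengthh}. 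For the time-difference term I would use the identity $(\vec E^{m+1}-\vec E^m)\cdot \vec E^{m+1} = \tfrac12(|\vec E^{m+1}|^2 - |\vec E^m|^2) + \tfrac12|\vec E^{m+1}-\vec E^m|^2$ applied entrywise to the discrete $H^1$-seminorm, which yields the first two groups of terms on the left of \eqref{eq:err1a} with the factor $\frac{1+\lambda}{2}$ once the $\lambda$-weighted multiplier is included.

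Next I would handle the contribution of the $\lambda$-weighted multiplier $\lambda | \delta^1 \vec X^m_j |^2 \frac{\vec E^{m+1}_j - \vec E^m_j}{\Delta t}$: testing the left-hand side of \eqref{eq:err1} against this term gives, modulo the $| \delta^1 \vec X^m_j |^2$ weight cancelling against the $|\delta^1 \vec X^m_j|^{-2}$ in front of $\delta^2 \vec E^{m+1}_j$, a term $\lambda h \sum_j | \delta^1 \vec X^m_j |^2 \left|\frac{\vec E^{m+1}_j - \vec E^m_j}{\Delta t}\right|^2 \geq \tfrac14 c_0^2 \lambda \left|\frac{\vec E^{m+1}-\vec E^m}{\Delta t}\right|_{0,h}^2$ using the lower bound in \eqref{eq:lengthh}, and a cross term $-\lambda h \sum_j \delta^2 \vec E^{m+1}_j \cdot \frac{\vec E^{m+1}_j - \vec E^m_j}{\Delta t}$; the latter I would rewrite via summation by parts as $\lambda h \sum_j \delta^- \vec E^{m+1}_j \cdot \delta^-\!\left(\frac{\vec E^{m+1}-\vec E^m}{\Delta t}\right)_j$ plus boundary terms, and absorb it using Young's inequality into the $\frac{1}{2\Delta t}|\vec E^{m+1}-\vec E^m|_{1,h}^2$ and $\tfrac14 c_0^2 \lambda |\cdot|_{0,h}^2$ reserves, at the cost of a factor $C_1\lambda$ deficit which is exactly why the boundary coefficient $2-C_1\lambda$ appears.

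The boundary terms are the delicate point and where I would spend the most care. Using \eqref{eq:dtE0} the first components drop out, and the second components of $\delta^+ \vec E^{m+1}_0$ and $\delta^- \vec E^{m+1}_J$ are governed by the relations \eqref{eq:err1l}, \eqref{eq:err1r}. Substituting these, the boundary contributions $\frac{1}{\Delta t}\vec E^{m+1}_0 \cdot \delta^+ \vec E^{m+1}_0$ etc., after using $\vec X^m_1 \cdot \vec e_1 \approx h |\delta^+ \vec X^m_0|^2 \cdot (\ldots)$ from \eqref{eq:bdh} and the structure of the discrete boundary condition, should reproduce the favourable terms $2h \frac{(\delta^- \vec E^{m+1}_1 \cdot \vec e_2)^2}{(\vec X^m_1 \cdot \vec e_1)^2}$ and its right-boundary analogue, up to the residual contributions $R^{m+1}_0, R^{m+1}_J$ which by Lemma~\ref{lem:consist} are $\mathcal O(h(h^2+\Delta t))$ and can be absorbed into $Ch(h^4+(\Delta t)^2)$ via Young's inequality (after dividing by appropriate powers of $h$, noting $|\delta^+ \vec X^m_0|^2$ is bounded below). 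The remaining $\left(1 - \frac{|\delta^+ \vec x^m_0|^2}{|\delta^+ \vec X^m_0|^2}\right)$ factor in \eqref{eq:err1l} is $\mathcal O(h^{1/2})$ by \eqref{eq:maxest}, giving a controllable perturbation. The terms $\vec T^{m,i}_j$ on the right are simply carried along unestimated at this stage, to be dealt with in Lemmas~\ref{lem:lemma1b} and \ref{lem:lemma2}. Throughout, the constant $C$ on the right collects the perturbative boundary contributions and the $C|\vec E^m|_{1,h}^2$ arising from the $|\vec E^m|_{1,h}^2$ produced by the energy identity; the hard part is bookkeeping the boundary algebra so that the coefficient of the favourable boundary term is genuinely $2 - C_1\lambda$ rather than something that degenerates as $h \to 0$.
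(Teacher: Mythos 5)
Your overall strategy coincides with the paper's: test \eqref{eq:err1} with $\lambda | \delta^1 \vec X^m_j |^2 \frac{\vec E^{m+1}_j- \vec E^m_j}{\Delta t} - \delta^2 \vec E^{m+1}_j$, sum by parts, use the boundary relations \eqref{eq:err1l}, \eqref{eq:err1r} together with \eqref{eq:bdh} and the consistency bounds to extract the favourable boundary terms. However, there is one genuine gap in your accounting, and it sits exactly at the point you flag as "the hard part". The term $\tfrac14 c_0^2 \lambda \, | \frac{\vec E^{m+1}- \vec E^m}{\Delta t} |^2_{0,h}$ does \emph{not} follow from \eqref{eq:lengthh} applied to $\lambda h \sum_{j=1}^{J-1} | \delta^1 \vec X^m_j |^2 | \frac{\vec E^{m+1}_j- \vec E^m_j}{\Delta t} |^2$ alone: by \eqref{eq:norms} the $|\cdot|_{0,h}$ norm contains the boundary contributions $\tfrac{h}{2} | \frac{\vec E^{m+1}_0- \vec E^m_0}{\Delta t} |^2$ and $\tfrac{h}{2} | \frac{\vec E^{m+1}_J- \vec E^m_J}{\Delta t} |^2$, which the interior sum does not see. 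These must be estimated separately from \eqref{eq:err1l}, \eqref{eq:err1r}, which gives bounds of the form $\tfrac{h}{2} | \frac{\vec E^{m+1}_0- \vec E^m_0}{\Delta t} |^2 \leq C h \frac{(\delta^- \vec E^{m+1}_1 \cdot \vec e_2)^2}{(\vec X^m_1 \cdot \vec e_1)^2} + C h (h^2+\Delta t)^2 + C h | \delta^- \vec E^m_1 |^2$. Multiplying by $\tfrac14 c_0^2 \lambda$ and adding, the first term on the right must be subtracted from the boundary coefficient $2$ obtained from the summation-by-parts boundary terms --- \emph{this} is the source of the $(2 - C_1 \lambda)$, not the cross term.

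Relatedly, your proposal to absorb the cross term $-\lambda h \sum_j \delta^2 \vec E^{m+1}_j \cdot \frac{\vec E^{m+1}_j- \vec E^m_j}{\Delta t}$ by Young's inequality is both unnecessary and misleading: because the weight $| \delta^1 \vec X^m_j |^2$ in the multiplier cancels the $| \delta^1 \vec X^m_j |^{-2}$ in front of the discrete Laplacian, this cross term is \emph{identical} to $\lambda$ times the unweighted one, and the two combine exactly into $-(1+\lambda)\frac{h}{\Delta t} \sum_j (\vec E^{m+1}_j - \vec E^m_j)\cdot \delta^2 \vec E^{m+1}_j$, which summation by parts and the identity $2(a-b)\cdot a = |a|^2 - |b|^2 + |a-b|^2$ convert without loss into the $\frac{1+\lambda}{2\Delta t}$-terms plus boundary terms of the form $\frac{\vec E^{m+1}_0 - \vec E^m_0}{\Delta t} \cdot \delta^+ \vec E^{m+1}_0$ (note these involve the time difference of the error, not $\vec E^{m+1}_0$ itself). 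An attempted Young absorption of this cross term would instead cost a multiple of $\frac{\lambda}{\Delta t} | \vec E^{m+1}-\vec E^m |_{1,h}^2$, which does not feed into the boundary coefficient at all. So your argument would not, as written, establish the full $|\cdot|_{0,h}$ control claimed in \eqref{eq:err1a}; you need the additional boundary-node estimate described above to close it.
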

\begin{proof} Fix $0< \lambda \leq 1$. 
If we multiply (\ref{eq:err1}) by $h  \bigl( \lambda | \delta^1 \vec X^m_j |^2  \frac{\vec E^{m+1}_j- \vec E^m_j}{\Delta t} - \delta^2 \vec E^{m+1}_j \bigr)$ and sum over 
$j=1,\ldots,J-1$, we obtain
\begin{align} \label{eq:err2}
& -(1+ \lambda)  \frac{h}{\Delta t} \sum_{j=1}^{J-1} \bigl( \vec E^{m+1}_j - \vec E^m_j \bigr) \cdot \delta^2 \vec E^{m+1}_j + 
 h \sum_{j=1}^{J-1} \frac{ | \delta^2 \vec E^{m+1}_j |^2 }{| \delta^1 \vec X^m_j |^2} \\ & \qquad\qquad
+h \lambda \sum_{j=1}^{J-1} | \delta^1 \vec X^m_j |^2 \left| \frac{ \vec E^{m+1}_j - \vec E^m_j}{\Delta t} \right|^2  \nonumber \\ & \quad
= h \sum_{i=1}^{5} \sum_{j=1}^{J-1} \vec T^{m,i}_j  \cdot \bigl( \lambda | \delta^1 \vec X^m_j |^2  \frac{\vec E^{m+1}_j- \vec E^m_j}{\Delta t} - \delta^2 \vec E^{m+1}_j \bigr). \nonumber 
\end{align}
Applying summation by parts, \eqref{eq:sbp}, 
to the first term in \eqref{eq:err2}, and noting \eqref{eq:norms} and 
$2(a-b)a = a^2 - b^2 + (a-b)^2$, yields
\begin{align} \label{eq:err2a}
& - \frac{h}{\Delta t}  \sum_{j=1}^{J-1} \bigl( \vec E^{m+1}_j - \vec E^m_j \bigr) \cdot \delta^2 \vec E^{m+1}_j  \\ & \quad
= \frac{h}{\Delta t}  \sum_{j=1}^J \delta^- \bigl(\vec E^{m+1}_j - \vec E^m_j \bigr)  \cdot \delta^- \vec E^{m+1}_j 
\nonumber \\ & \qquad \quad
- \frac{\vec E^{m+1}_J - \vec E^m_J}{\Delta t}  \cdot \delta^- \vec E^{m+1}_J
+ \frac{\vec E^{m+1}_0 - \vec E^m_0}{\Delta t}  \cdot \delta^+ \vec E^{m+1}_0 \nonumber \\ & \quad
= \frac{1}{2 \Delta t} \bigl( | \vec E^{m+1} |_{1,h}^2 - | \vec E^m |_{1,h}^2 \bigr) + \frac{1}{2 \Delta t} | \vec E^{m+1} - \vec E^m |_{1,h}^2 
\nonumber \\ & \qquad \quad
- \frac{\vec E^{m+1}_J - \vec E^m_J}{\Delta t}  \cdot \delta^- \vec E^{m+1}_J
+ \frac{\vec E^{m+1}_0 - \vec E^m_0}{\Delta t}  \cdot \delta^+ \vec E^{m+1}_0. \nonumber
\end{align}
On noting (\ref{eq:err1l}), \eqref{eq:fdo+}, 
Young's inequality, 
(\ref{eq:consist}), \eqref{eq:lengthh}, \eqref{eq:esta-}
and (\ref{eq:bdh}), we can 
estimate the last term on the right hand side of \eqref{eq:err2a} as 
\begin{align*} %\label{eq:deltae0}
& \frac{\vec E^{m+1}_0 - \vec E^m_0}{\Delta t}  \cdot \delta^+ \vec E^{m+1}_0
 = \frac{4}{h} \frac{\bigl( \delta^+ \vec E^{m+1}_0 \cdot \vec e_2 \bigr)^2}
{| \delta^+ \vec X^m_0 |^2} - \frac{4}{h} \frac{\delta^+ \vec E^{m+1}_0 \cdot \vec e_2}{| \delta^+ \vec X^m_0 |^2} R^{m+1}_0  
 \\ & \hspace{4cm}
+ \Bigl( 1 - \frac{| \delta^+ \vec x^m_0 |^2}{| \delta^+ \vec X^m_0 |^2} \Bigr) 
\frac{ \vec x^{m+1}_0 - \vec x^m_0}{\Delta t} \cdot \vec e_2 (\delta^+ \vec E^{m+1}_0 \cdot \vec e_2)  \nonumber \\ & \quad
\geq \frac{4 - \epsilon}{h} 
\frac{\bigl(\delta^+ \vec E^{m+1}_0 \cdot \vec e_2 \bigr)^2}{| \delta^- \vec X^m_1 |^2}
- C_\epsilon h  \bigl( h^4 + (\Delta t)^2 \bigr) 
- C_\epsilon h \, | \delta^+ \vec E^m_0 |^2  
\nonumber \\ & \quad
\geq \tfrac{9}{16}(4-\epsilon)h
\, \frac{\bigl( \delta^+ \vec E^{m+1}_0 \cdot \vec e_2 \bigr)^2}{( \vec X^m_1 \cdot \vec e_1)^2} 
- C_\epsilon h \bigl( h^4 + 
(\Delta t)^2 \bigr)
- C_\epsilon h \, | \delta^+ \vec E^m_0 |^2 . \nonumber
\end{align*}
On choosing $\epsilon$ sufficiently small, and arguing similarly for 
$\frac{ \vec E^{m+1}_J - \vec E^m_J}{\Delta t}  \cdot  \delta^- \vec E^{m+1}_J$, 
we find that \eqref{eq:err2a} implies
\begin{align}  \label{eq:err3}
& -(1+ \lambda)  \frac{h}{\Delta t}  \sum_{j=1}^{J-1} \bigl( \vec E^{m+1}_j - \vec E^m_j \bigr) \cdot \delta^2 \vec E^{m+1}_j
\geq  \ \frac{1+ \lambda}{2 \Delta t} \bigl( | \vec E^{m+1} |_{1,h}^2 - | \vec E^m |_{1,h}^2 \bigr) \\ & \qquad
 + \frac{1}{2 \Delta t} | \vec E^{m+1}- \vec E^m |_{1,h}^2
+ 2 h \left[ \frac{\bigl( \delta^+ \vec E^{m+1}_0 \cdot \vec e_2 \bigr)^2}{( \vec X^m_1 \cdot \vec e_1)^2} +  \frac{\bigl( \delta^- \vec E^{m+1}_J \cdot \vec e_2 \bigr)^2}{( \vec X^m_{J-1} \cdot \vec e_1)^2}
\right]  
\nonumber \\ & \qquad
- C h \bigl( | \delta^- \vec E^m_1 |^2 + | \delta^- \vec E^m_J |^2 \bigr) - C h \bigl( h^4 + (\Delta t)^2 \bigr).\nonumber
\end{align}
In addition, we deduce from (\ref{eq:err1l}), \eqref{eq:esta-}, 
\eqref{eq:lengthh} and the fact that thanks to \eqref{eq:Xje1} we have
$h |\delta^+ \vec X^m_0|^2 \geq h^{-1}\,(\vec X^m_1\cdot\vec e_1)^2
\geq C\,\vec X^m_1\cdot\vec e_1$
that
\begin{align*}
\left| \frac{ \vec E^{m+1}_0 - \vec E^m_0 }{\Delta t} \right| 
 \leq C   \frac{ | \delta^- \vec E^{m+1}_1  \cdot \vec e_2|}{\vec X^m_1 \cdot \vec e_1} 
 + C  \frac1h  | R^{m+1}_0 | + C  | \delta^- \vec E^m_1 | ,  
\end{align*}
so that \eqref{eq:consist} yields
\begin{align}
\frac{h}{2} \left| \frac{ \vec E^{m+1}_0 - \vec E^m_0 }{\Delta t} \right|^2 \leq  C h   \frac{ (\delta^-\vec E^{m+1}_1 \cdot \vec e_2 )^2}{(\vec X^m_1 \cdot \vec e_1)^2}
+ C h ( h^2 + \Delta t)^2 + C h  | \delta^- \vec E^m_1 |^2.  \label{eq:bddt}
\end{align}
Inserting (\ref{eq:err3}) into (\ref{eq:err2}) and using 
\eqref{eq:norms}, (\ref{eq:lengthh}), \eqref{eq:fdo+}, 
as well as (\ref{eq:bddt}) and a corresponding estimate at the right boundary,
we obtain the desired result \eqref{eq:err1a}.
\end{proof}

\begin{lemma} \label{lem:lemma1b} 
Let $0 < \lambda \leq 1$ and $\Delta t \leq \gamma h$. Then
\begin{align}\label{eq:err4}
& h \sum_{i=1}^{5} \sum_{j=1}^{J-1} \vec T^{m,i}_j  \cdot \bigl( \lambda | \delta^1 \vec X^m_j |^2  \frac{\vec E^{m+1}_j- \vec E^m_j}{\Delta t} - \delta^2 \vec E^{m+1}_j \bigr)
+ h \sum_{j=1}^{J-1} \vec T^{m,3}_j \cdot \delta^2 \vec E^{m+1}_j\\ & \quad
\leq  \frac{1}{8 C_0^2} | \vec E^{m+1} |_{2,h}^2  +  C \bigl(  | \vec E^m |_{1,h}^2 + | \vec E^{m+1} |_{1,h}^2 \bigr) + C \, \frac{\gamma}{\Delta t} | \vec E^{m+1} - \vec E^m |_{1,h}^2
\nonumber \\ & + \tfrac18 c_0^2 \lambda \, 
\left|  \frac{\vec E^{m+1}- \vec E^m}{\Delta t} \right|^2_{0,h} 
+C \bigl( h^4 + (\Delta t)^2 \bigr) +  C (\lambda+h) h  \sum_{j=1}^{J-1} \frac{(\delta^1 \vec E^{m+1}_j \cdot \vec e_2)^2}{(\vec X^m_j \cdot \vec e_1)^2}
. \nonumber
\end{align}
\end{lemma}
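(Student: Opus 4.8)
The plan is to estimate, one at a time, the five contributions $\vec T^{m,i}_j$ from the error relation \eqref{eq:err1}, each tested against the grid function $\lambda|\delta^1\vec X^m_j|^2\frac{\vec E^{m+1}_j-\vec E^m_j}{\Delta t}-\delta^2\vec E^{m+1}_j$ and summed over $j=1,\ldots,J-1$. For $i=3$ only the part tested against $\lambda|\delta^1\vec X^m_j|^2\frac{\vec E^{m+1}_j-\vec E^m_j}{\Delta t}$ has to be treated here, since the extra sum $h\sum_j\vec T^{m,3}_j\cdot\delta^2\vec E^{m+1}_j$ on the left of \eqref{eq:err4} cancels the $\delta^2\vec E^{m+1}$--part of the $\vec T^{m,3}$--contribution, and exactly that cancelled piece is what is analysed in Lemma~\ref{lem:lemma2}.

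The first step is a pointwise estimate of each $\vec T^{m,i}_j$, $i\in\{1,2,4,5\}$, as (uniformly bounded factor)$\,\times\,$(small or controllable factor). By \eqref{eq:esta} and the boundedness of $\delta^2\vec x^{m+1}_j$, $|\vec T^{m,1}_j|\leq C|\delta^1\vec E^m_j|$. For $i=2$, write $\delta^1\vec X^{m+1}_j\cdot\vec e_2=\delta^1\vec x^{m+1}_j\cdot\vec e_2-\delta^1\vec E^{m+1}_j\cdot\vec e_2$, bound the bracket by $C|\delta^1\vec E^m_j|$ with \eqref{eq:esta}, \eqref{eq:lengthh}, observe that $\frac{|\delta^1\vec x^{m+1}_j\cdot\vec e_2|}{\vec X^m_j\cdot\vec e_1}\leq C$ — because $\vec x_\rho\cdot\vec e_2$ vanishes on the axis by \eqref{eq:xtbc} the numerator is $\leq Cq_j(1-q_j)$ (using also $h^2\leq 2h\,q_j(1-q_j)$ for $h$ small), while $\vec X^m_j\cdot\vec e_1\geq c_3 q_j(1-q_j)$ by \eqref{eq:Xje1} — and replace one $|\delta^1\vec E^m_j|$ by $Ch^{\frac12}$ via \eqref{eq:maxest}, so that $|\vec T^{m,2}_j|\leq C|\delta^1\vec E^m_j|+Ch^{\frac12}\frac{|\delta^1\vec E^{m+1}_j\cdot\vec e_2|}{\vec X^m_j\cdot\vec e_1}$. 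For $i=4$, factorise $\vec T^{m,4}_j=\frac{(\delta^1\vec x^{m+1}_j\cdot\vec e_2)(\delta^1\vec x^m_j)^\perp}{|\delta^1\vec x^m_j|^2\,(\vec x^m_j\cdot\vec e_1)}\cdot\frac{\vec E^m_j\cdot\vec e_1}{\vec X^m_j\cdot\vec e_1}$; the first factor is bounded (numerator $\leq Cq_j(1-q_j)$, denominator $\geq C^{-1}q_j(1-q_j)$), and, since $\vec E^m_0\cdot\vec e_1=\vec E^m_J\cdot\vec e_1=0$, the second is $\leq\frac{2q_j(1-q_j)\max_k|\delta^-\vec E^m_k|}{c_3 q_j(1-q_j)}=C\max_k|\delta^-\vec E^m_k|\leq Ch^{-\frac12}|\vec E^m|_{1,h}$ by \eqref{eq:vje1}, \eqref{eq:Xje1} and the inverse inequality \eqref{eq:inv1}; in particular $h\sum_j|\vec T^{m,4}_j|^2\leq C|\vec E^m|_{1,h}^2$. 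Finally $|\vec T^{m,5}_j|=|\vec R^{m+1}_j|\leq C(h^2+\Delta t)$ by Lemma~\ref{lem:consist}, and $|\vec T^{m,3}_j|\leq C\frac{|\delta^1\vec E^{m+1}_j\cdot\vec e_2|}{\vec X^m_j\cdot\vec e_1}$.

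The second step is bookkeeping. For each $i$, multiply the pointwise bound by $|\delta^2\vec E^{m+1}_j|$, resp. by $\lambda|\delta^1\vec X^m_j|^2\bigl|\frac{\vec E^{m+1}_j-\vec E^m_j}{\Delta t}\bigr|$, sum over $j$, and apply the discrete Cauchy--Schwarz and Young inequalities with a small parameter $\epsilon$, using $|\delta^1\vec X^m_j|^2\leq 4C_0^2$, $h(J-1)\leq1$, $h\sum_j|\delta^2\vec E^{m+1}_j|^2=|\vec E^{m+1}|_{2,h}^2$, $h\sum_j|\delta^1\vec E^m_j|^2\leq|\vec E^m|_{1,h}^2$ and $h\sum_j\bigl|\frac{\vec E^{m+1}_j-\vec E^m_j}{\Delta t}\bigr|^2\leq\bigl|\frac{\vec E^{m+1}-\vec E^m}{\Delta t}\bigr|_{0,h}^2$. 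Choosing $\epsilon$ small enough that the accumulated $|\vec E^{m+1}|_{2,h}^2$--contributions stay below $\frac1{8C_0^2}$ and the accumulated $\lambda\bigl|\frac{\vec E^{m+1}-\vec E^m}{\Delta t}\bigr|_{0,h}^2$--contributions below $\frac18 c_0^2\lambda$, one recovers exactly the right-hand side of \eqref{eq:err4}: the terms $C\bigl(|\vec E^m|_{1,h}^2+|\vec E^{m+1}|_{1,h}^2\bigr)$ from the coefficient-error factors $|\delta^1\vec E^m_j|$ and from $h\sum_j|\vec T^{m,4}_j|^2\leq C|\vec E^m|_{1,h}^2$; the consistency contribution $C(h^4+\Delta t^2)=C(h^2+\Delta t)^2$ (the Cauchy--Schwarz against $\delta^2\vec E^{m+1}$ loses no power of $h$ since $h(J-1)\leq1$); the quantity $C(\lambda+h)h\sum_j\frac{(\delta^1\vec E^{m+1}_j\cdot\vec e_2)^2}{(\vec X^m_j\cdot\vec e_1)^2}$ produced, after Young, by the quotients $\frac{|\delta^1\vec E^{m+1}_j\cdot\vec e_2|}{\vec X^m_j\cdot\vec e_1}$ in $\vec T^{m,2}$ (with the surviving $h^{\frac12}$) and $\vec T^{m,3}$; and, wherever a $\delta^1\vec E^{m+1}_j$ or $\delta^-\vec E^{m+1}_j$ has to be rewritten as $\delta^1\vec E^m_j$ plus its time increment, the term $C\frac{\gamma}{\Delta t}|\vec E^{m+1}-\vec E^m|_{1,h}^2$, which is where the hypothesis $\Delta t\leq\gamma h$ enters.

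I expect the genuine difficulty to be $\vec T^{m,4}$, generated by $\frac1{\vec X^m_j\cdot\vec e_1}-\frac1{\vec x^m_j\cdot\vec e_1}=\frac{\vec E^m_j\cdot\vec e_1}{(\vec X^m_j\cdot\vec e_1)(\vec x^m_j\cdot\vec e_1)}$: the denominator has a \emph{double} degeneracy $(q_j(1-q_j))^{-2}$, whereas \eqref{eq:vje1} only furnishes a \emph{single} factor $q_j(1-q_j)$ from $\vec E^m_j\cdot\vec e_1$, so a naive bound is short by one power of $q_j(1-q_j)$ at the axis. The remedy is to spend the surplus $(q_j(1-q_j))^{-1}$ against the accompanying factor $\delta^1\vec x^{m+1}_j\cdot\vec e_2$, which itself vanishes like $q_j(1-q_j)$ at the endpoints precisely because of the right-angle boundary condition \eqref{eq:xtbc}; after this cancellation $\vec T^{m,4}_j$ is bounded uniformly in $j$ and only the inverse inequality \eqref{eq:inv1} is needed to trade the $h^{-\frac12}$ for $|\vec E^m|_{1,h}$. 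The same balancing of numerator against denominator, now leaving behind a small $h^{\frac12}$, is what keeps $\vec T^{m,2}$ and (in its $\lambda$--part) $\vec T^{m,3}$ under control and — crucially for the induction — confines the truly dangerous quantity $h\sum_j\frac{(\delta^1\vec E^{m+1}_j\cdot\vec e_2)^2}{(\vec X^m_j\cdot\vec e_1)^2}$ to appear on the right of \eqref{eq:err4} only with the small coefficient $\lambda+h$, so that the negative contribution $-c_4 h\sum_j\frac{(\delta^1\vec E^{m+1}_j\cdot\vec e_2)^2}{(\vec X^m_j\cdot\vec e_1)^2}$ delivered by Lemma~\ref{lem:lemma2} can absorb it once $\lambda$ and $h$ are taken sufficiently small.
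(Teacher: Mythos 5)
Your overall strategy coincides with the paper's: the same term-by-term treatment with Cauchy--Schwarz and Young, the same pointwise bounds for $\vec T^{m,1}$, $\vec T^{m,2}$, $\vec T^{m,3}$, $\vec T^{m,5}$, the same cancellation of the $\delta^2\vec E^{m+1}$--part of the $\vec T^{m,3}$--contribution against the extra sum on the left of \eqref{eq:err4}, and the same balancing of the degenerate denominators against the vanishing of $\delta^1\vec x^{m+1}_j\cdot\vec e_2$ and $\vec E^m_j\cdot\vec e_1$ at the axis. However, your treatment of $\vec T^{m,4}$ contains a genuine quantitative gap. From $|\vec T^{m,4}_j|\leq C\max_{1\leq k\leq J}|\delta^-\vec E^m_k|\leq C h^{-\frac12}|\vec E^m|_{1,h}$ you conclude ``in particular $h\sum_j|\vec T^{m,4}_j|^2\leq C|\vec E^m|_{1,h}^2$''. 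This does not follow: summing the uniform bound over $j=1,\ldots,J-1$ gives $h\sum_j|\vec T^{m,4}_j|^2\leq C\max_k|\delta^-\vec E^m_k|^2\leq C h^{-1}|\vec E^m|_{1,h}^2$, i.e.\ you are short by a factor $h$. That lost factor is fatal for the induction: pairing with $\delta^2\vec E^{m+1}$ and applying Young would leave $C_\epsilon h^{-1}|\vec E^m|_{1,h}^2$ on the right-hand side, which cannot be absorbed into the Gronwall-type inequality \eqref{eq:err11} with an $h$-independent constant, and the discrete Sobolev inequality \eqref{eq:dsi} applied at level $m$ is no help either, since $|\vec E^m|_{2,h}$ is not controlled by the induction hypothesis \eqref{eq:indass}.

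The paper's repair is precisely the source of the otherwise mysterious term $C\,\frac{\gamma}{\Delta t}|\vec E^{m+1}-\vec E^m|_{1,h}^2$ in the statement: one writes $\max_k|\delta^-\vec E^m_k|\leq\max_k|\delta^-\vec E^{m+1}_k|+\max_k|\delta^-(\vec E^{m+1}_k-\vec E^m_k)|$, estimates the first term by \eqref{eq:dsi} at level $m+1$ (whose $|\vec E^{m+1}|_{2,h}$--contribution is absorbed into $\epsilon|\vec E^{m+1}|_{2,h}^2$) and the increment by the inverse inequality \eqref{eq:inv1} together with $h^{-1}\leq\gamma/\Delta t$; the resulting $\frac{\gamma}{\Delta t}|\vec E^{m+1}-\vec E^m|_{1,h}^2$ is then swallowed, for $\gamma$ small, by the good term $\frac{1}{2\Delta t}|\vec E^{m+1}-\vec E^m|_{1,h}^2$ from Lemma~\ref{lem:lemma1a}. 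Your mention, in the bookkeeping paragraph, of rewriting differences of $\vec E$ at one time level in terms of the other plus a time increment shows you sensed where the $\gamma/\Delta t$ term must come from, but your actual estimate of $\vec T^{m,4}$ bypasses this step and asserts a bound that the inverse inequality cannot deliver. (An alternative repair would be a discrete Hardy inequality, $h\sum_j\frac{|\vec E^m_j\cdot\vec e_1|^2}{q_j^2(1-q_j)^2}\leq C|\vec E^m|_{1,h}^2$, exploiting $\vec E^m_0\cdot\vec e_1=\vec E^m_J\cdot\vec e_1=0$, which would indeed give $h\sum_j|\vec T^{m,4}_j|^2\leq C|\vec E^m|_{1,h}^2$ without the $\gamma/\Delta t$ term --- but that inequality is neither stated nor proved in the paper and is not what \eqref{eq:vje1} combined with \eqref{eq:inv1} yields.)
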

\begin{proof}
We recall
the definitions of the %error 
terms $\vec T^{m,i}$ in \eqref{eq:err1}. %To begin, 
Then we note from
\eqref{eq:esta}, \eqref{eq:lengthh}, (\ref{eq:Xje1}), (\ref{eq:xje1}) and (\ref{eq:maxest}) that
\begin{align*}
| \vec T^{m,1}_j |+ | \vec T^{m,2}_j | & \leq C\bigl( | \delta^2 \vec x^{m+1}_j | + \frac{| \delta^1 \vec X^{m+1}_j \cdot \vec e_2|}{\vec X^m_j \cdot \vec e_1} \bigr) | \delta^1 \vec E^m_j |
\\
& \leq C \bigl( 1+ \frac{| \delta^1 \vec x^{m+1}_j \cdot \vec e_2|}{ \vec x^{m+1}_j \cdot \vec e_1} \frac{\vec x^{m+1}_j \cdot \vec e_1}{\vec X^m_j \cdot \vec e_1} +
\frac{| \delta^1 \vec E^{m+1}_j \cdot \vec e_2|}{\vec X^m_j \cdot \vec e_1} \bigr) | \delta^1 \vec E^m_j | \\
& \leq C | \delta^1 \vec E^m_j| + C h^{\frac{1}{2}} \frac{| \delta^1 \vec E^{m+1}_j \cdot \vec e_2|}{\vec X^m_j \cdot \vec e_1},
\quad 1 \leq j \leq J-1,
\end{align*}
so that \eqref{eq:fdo1}, \eqref{eq:fdo+}, \eqref{eq:lengthh} and
\eqref{eq:norms} imply that
\begin{align}  \label{eq:t1t2}
&  h \,  \sum_{i=1}^2 \sum_{j=1}^{J-1}  \vec T^{m,i}_j \cdot  \bigl( \lambda \, | \delta^1 \vec X^m_j |^2  \frac{\vec E^{m+1}_j- \vec E^m_j}{\Delta t} - \delta^2 \vec E^{m+1}_j \bigr) 
 \\ & \
\leq C h \sum_{j=1}^{J-1} \left( | \delta^- \vec E^m_j | + | \delta^- \vec E^m_{j+1} | + h^{\frac{1}{2}} \frac{| \delta^1 \vec E^{m+1}_j \cdot \vec e_2|}{\vec X^m_j \cdot \vec e_1} \right)
\left( \lambda \left| \frac{\vec E^{m+1}_j- \vec E^m_j}{\Delta t} \right| +
| \delta^2 \vec E^{m+1}_j | \right) \nonumber \\ & \
\leq \epsilon \lambda \left| \frac{\vec E^{m+1} - \vec E^m}{\Delta t} \right|^2_{0,h} +
\epsilon \, | \vec E^{m+1} |^2_{2,h} + C_\epsilon  \, |  \vec E^m |_{1,h}^2 
+ C_\epsilon h^2 \sum_{j=1}^{J-1} \frac{(\delta^1 \vec E^{m+1}_j \cdot \vec e_2)^2}{(\vec X^m_j \cdot \vec e_1)^2}.
\nonumber 
\end{align}
The term involving the product of $\vec T^{m,3}$ with 
$\delta^2 \vec E^{m+1}$ is not estimated, while
\begin{align} \label{eq:errt3}
& h \, \sum_{j=1}^{J-1}  \vec T^{m,3}_j \cdot   \lambda \,  | \delta^1 \vec X^m_j |^2  \frac{\vec E^{m+1}_j- \vec E^m_j}{\Delta t} 
 \\ & \quad
\leq C \lambda  \left( h \, \sum_{j=1}^{J-1} \frac{(\delta^1 \vec E^{m+1}_j \cdot \vec e_2)^2}{(\vec X^m_j \cdot \vec e_1)^2} \right)^{\frac{1}{2}} \, 
\left| \frac{\vec E^{m+1}- \vec E^m}{\Delta t} \right|_{0,h} 
\nonumber \\ & \quad
\leq \epsilon \lambda \left| \frac{\vec E^{m+1}- \vec E^m}{\Delta t} \right|_{0,h}^2 + C_\epsilon \lambda h \,  \sum_{j=1}^{J-1} \frac{(\delta^1 \vec E^{m+1}_j \cdot \vec e_2)^2}{(\vec X^m_j \cdot \vec e_1)^2}.\nonumber
\end{align}

\noindent
Next, we have
\begin{equation}  \label{eq:t4a}
| \vec T^{m,4}_j | \leq C \frac{| \vec E^m_j \cdot \vec e_1 |}{(\vec x^m_j \cdot \vec e_1) ( \vec X^m_j \cdot \vec e_1)} \, 
| \delta^1 \vec x^{m+1}_j \cdot \vec e_2 |, \quad 1 \leq j \leq J-1.
\end{equation}
In addition, (\ref{eq:xje1}) and (\ref{eq:defM}) yield
\[
| \delta^1 \vec x^{m+1}_j \cdot \vec e_2 | = \frac{| \delta^1 \vec x^{m+1}_j \cdot \vec e_2 |}{\vec x^{m+1}_j \cdot \vec e_1}  \vec x^{m+1}_j \cdot \vec e_1 \leq C q_j (1-q_j), \quad 1 \leq j \leq J-1,
\]
so that \eqref{eq:t4a}, (\ref{eq:vje1}), (\ref{eq:xje1}), 
(\ref{eq:Xje1}) and \eqref{eq:inv1} imply
\begin{align*} % \label{eq:t4aa}
| \vec T^{m,4}_j | & \leq C \, \frac{q_j^2 (1-q_j)^2}{c_2 c_3 q_j^2 (1-q_j)^2} \, \max_{1 \leq k \leq J} | \delta^- \vec E^m_k | \leq C \, 
\max_{1 \leq k \leq J} | \delta^- \vec E^m_k | \\
& \leq  C \bigl( \max_{1 \leq k \leq J} | \delta^- \vec E^{m+1}_k | +  \max_{1 \leq k \leq J} | \delta^-( \vec E^{m+1}_k - \vec E^m_k) | \bigr) \nonumber \\
&  \leq C \max_{1 \leq k \leq J} | \delta^- \vec E^{m+1}_k| + C h^{-\frac{1}{2}} | \vec E^{m+1}- \vec E^m|_{1,h}, \quad 1 \leq j \leq J-1. \nonumber 
\end{align*}

Hence we obtain with the help of (\ref{eq:dsi}) and the fact that $\Delta t \leq \gamma h$
\begin{align} \label{eq:t4}
&  h \, \sum_{j=1}^{J-1} \vec T^{m,4}_j \cdot \bigl( \lambda | \delta^1 \vec X^m_j |^2  \frac{\vec E^{m+1}_j- \vec E^m_j}{\Delta t} - \delta^2 \vec E^{m+1}_j \bigr)
 \\ & \quad
 \leq C \bigl(  \max_{1 \leq j \leq J}  | \delta^- \vec E^{m+1}_j |  + h^{-\frac{1}{2}} | \vec E^{m+1}- \vec E^m|_{1,h} \bigr)
\Bigl( \lambda \left| \frac{ \vec E^{m+1} - \vec E^m}{\Delta t} \right|_{0,h} + | \vec E^{m+1} |_{2,h} \Bigr) \nonumber \\ & \quad
\leq \epsilon | \vec E^{m+1} |^2_{2,h} + \epsilon \lambda  \left| \frac{ \vec E^{m+1} - \vec E^m}{\Delta t} \right|^2_{0,h} 
+ C_\epsilon \, | \vec E^{m+1} |_{1,h}^2 
+ C_\epsilon \, \frac{\gamma}{\Delta t} | \vec E^{m+1} - \vec E^m |_{1,h}^2. \nonumber
\end{align}
Finally, we infer from (\ref{eq:consist}) that
\begin{align}  \label{eq:errt6}
& h \, \sum_{j=1}^{J-1} \vec T^{m,5}_j \cdot \bigl( \lambda | \delta^1 \vec X^m_j |^2  \frac{\vec E^{m+1}_j- \vec E^m_j}{\Delta t} - \delta^2 \vec E^{m+1}_j \bigr) 
\\ & \quad
\leq C \bigl( h^2 + \Delta t \bigr) \Bigl( \lambda \left| \frac{ \vec E^{m+1} - \vec E^m}{\Delta t} \right|_{0,h} + | \vec E^{m+1} |_{2,h} \Bigr) 
\nonumber \\ & \quad
\leq \epsilon \, | \vec E^{m+1} |^2_{2,h}  + \epsilon \lambda  \left| \frac{ \vec E^{m+1} - \vec E^m}{\Delta t} \right|^2_{0,h}  + C_\epsilon \, 
\bigl( h^4 + (\Delta t)^2 \bigr). \nonumber 
\end{align}
If we add \eqref{eq:t1t2}, \eqref{eq:errt3}, \eqref{eq:t4} and \eqref{eq:errt6}, %into \eqref{eq:err1a}.  If we 
and then choose $\epsilon $ sufficiently small, the bound \eqref{eq:err4} follows. 
\end{proof}

In the next lemma we mimic the formal calulations
in \eqref{eq:formal2}, thereby closing our estimates.

\begin{lemma} \label{lem:lemma2} 
There exists a constant $c_4>0$ such that 
\begin{align} \label{eq:t5}
& h \sum_{j=1}^{J-1} \vec T^{m,3}_j \cdot \delta^2 \vec E^{m+1}_j \\ & \quad
\geq c_4 h \, \sum_{j=1}^{J-1} \frac{ ( \delta^1 \vec E^{m+1}_j  \cdot \vec e_2)^2}
{(\vec X^m_j \cdot \vec e_1)^2}
- \tfrac53 h \left( \frac{( \delta^- \vec E^{m+1}_1  \cdot \vec e_2)^2}{(\vec X^m_1 \cdot \vec e_1)^2} + 
\frac{( \delta^+ \vec E^{m+1}_{J-1}  \cdot \vec e_2)^2}{(\vec X^m_{J-1} \cdot \vec e_1)^2} \right) 
\nonumber \\ & \qquad
- \epsilon | \vec E^{m+1} |_{2,h}^2 - C_\epsilon \, | \vec E^{m+1} |_{1,h}^2.
\nonumber
\end{align}
\end{lemma}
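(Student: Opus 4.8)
The plan is to reproduce the integration-by-parts computation \eqref{eq:formal2} at the discrete level. Since $\cdot^\perp$ is clockwise rotation by $\tfrac\pi2$, write $(\delta^1\vec x^m_j)^\perp = (\delta^1\vec x^m_j\cdot\vec e_2)\,\vec e_1 - (\delta^1\vec x^m_j\cdot\vec e_1)\,\vec e_2$. Setting $a_j:=\vec E^{m+1}_j\cdot\vec e_2$ and $b_j:=\vec E^{m+1}_j\cdot\vec e_1$, the definition of $\vec T^{m,3}_j$ in \eqref{eq:err1} gives
\[
\vec T^{m,3}_j\cdot\delta^2\vec E^{m+1}_j
= \frac{\delta^1\vec x^m_j\cdot\vec e_1}{|\delta^1\vec x^m_j|^2\,\vec X^m_j\cdot\vec e_1}\,(\delta^1 a_j)(\delta^2 a_j)
- \frac{\delta^1\vec x^m_j\cdot\vec e_2}{|\delta^1\vec x^m_j|^2\,\vec X^m_j\cdot\vec e_1}\,(\delta^1 a_j)(\delta^2 b_j) .
\]
I treat the two resulting sums separately: the first is the \emph{main} contribution, corresponding to $\tfrac12\int\tfrac1{|\vec x_\rho|}\tfrac1{\vec x\cdot\vec e_1}[(\vec x_\rho\cdot\vec e_2)^2]_\rho\drho$ in \eqref{eq:formal2}, and the second is a lower-order \emph{error} term.

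For the error term I use that near the axis $\delta^1\vec x^m_j$ is almost parallel to $\vec e_1$: by \eqref{eq:xje1} and \eqref{eq:defM} one has $|\delta^1\vec x^m_j\cdot\vec e_2|\le C q_j(1-q_j)$, while $\vec X^m_j\cdot\vec e_1\ge c_3 q_j(1-q_j)$ by \eqref{eq:Xje1}; together with \eqref{eq:lengthh} this bounds the coefficient of $(\delta^1 a_j)(\delta^2 b_j)$ uniformly. Hence, using $h\sum_{j=1}^{J-1}|\delta^1 a_j|^2\le|\vec E^{m+1}|_{1,h}^2$, $h\sum_{j=1}^{J-1}|\delta^2 b_j|^2\le|\vec E^{m+1}|_{2,h}^2$, Cauchy--Schwarz and Young's inequality, the error sum is $\ge -\epsilon|\vec E^{m+1}|_{2,h}^2 - C_\epsilon|\vec E^{m+1}|_{1,h}^2$.

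For the main term the key algebraic identity is the discrete product rule $(\delta^1 a_j)(\delta^2 a_j) = \tfrac1{2h}\bigl((\delta^- a_{j+1})^2 - (\delta^- a_j)^2\bigr)$, which follows from \eqref{eq:fdo1}, \eqref{eq:fdo2} and $\delta^+a_j=\delta^-a_{j+1}$, and is the discrete counterpart of $\tfrac12[(\vec x_\rho\cdot\vec e_2)^2]_\rho$. Writing $w_j := \dfrac{\delta^1\vec x^m_j\cdot\vec e_1}{|\delta^1\vec x^m_j|^2\,\vec X^m_j\cdot\vec e_1}$ and $g_j:=(\delta^- a_j)^2$, the main sum becomes $\tfrac12\sum_{j=1}^{J-1}w_j(g_{j+1}-g_j)$, and discrete summation by parts (Abel) turns this into
\[
\tfrac12\, w_{J-1}\,g_J - \tfrac12\, w_1\, g_1 + \tfrac12\sum_{j=2}^{J-1}(w_{j-1}-w_j)\,g_j ,
\]
which is exactly the boundary term plus the two interior integrals produced by integration by parts in \eqref{eq:formal2}.

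It remains to estimate the three pieces. For the endpoint terms: near $\rho=0$, $\delta^1\vec x^m_1\cdot\vec e_1 = |\delta^1\vec x^m_1|(1+\mathcal{O}(h^2))$ and, using \eqref{eq:xtbc} and \eqref{eq:maxest}, $\delta^-\vec X^m_1\cdot\vec e_1 = |\delta^-\vec X^m_1|(1+\mathcal{O}(h^{1/2}))$ with $|\delta^-\vec X^m_1| = |\delta^1\vec x^m_1|(1+\mathcal{O}(h^{1/2}))$, while $\vec X^m_1\cdot\vec e_1 = h\,\delta^-\vec X^m_1\cdot\vec e_1$ since $\vec X^m_0\cdot\vec e_1=0$; hence $\tfrac12 w_1 = \bigl(\tfrac12 + o(1)\bigr)\,h\,(\vec X^m_1\cdot\vec e_1)^{-2}$, so $-\tfrac12 w_1 g_1 \ge -\tfrac53 h\,(\delta^-\vec E^{m+1}_1\cdot\vec e_2)^2/(\vec X^m_1\cdot\vec e_1)^2$ for $h$ small, and symmetrically at $\rho=1$ (there $w_{J-1}<0$ by \eqref{eq:xrhoe1}, and $\vec X^m_{J-1}\cdot\vec e_1=-h\,\delta^-\vec X^m_J\cdot\vec e_1$, so $\tfrac12 w_{J-1}g_J$ is of the same form); these produce the two $-\tfrac53 h(\cdots)$ terms in \eqref{eq:t5}. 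For the sum $\tfrac12\sum_{j=2}^{J-1}(w_{j-1}-w_j)g_j$ I split at $j=J_1$ and $j=J-J_1$: in the interior $\vec X^m_j\cdot\vec e_1\ge\tfrac12 c_1$ by \eqref{eq:xmje1} and $j\mapsto w_j$ has increments $\mathcal{O}(h)$, so this part is $\ge -C|\vec E^{m+1}|_{1,h}^2$; in the boundary strips, decomposing $w_{j-1}-w_j = \dfrac{\delta^1\vec x^m_{j-1}\cdot\vec e_1}{|\delta^1\vec x^m_{j-1}|^2}\cdot\dfrac{h\,\delta^-\vec X^m_j\cdot\vec e_1}{(\vec X^m_{j-1}\cdot\vec e_1)(\vec X^m_j\cdot\vec e_1)} + \mathcal{O}(h)\,(\vec X^m_j\cdot\vec e_1)^{-1}$ and invoking \eqref{eq:xrhoe1}, \eqref{eq:lengthh}, \eqref{eq:quotbd}, the first term is $\ge c\,h/(\vec X^m_j\cdot\vec e_1)^2$ and the correction is absorbed by Young's inequality into a small multiple of $h\sum g_j/(\vec X^m_j\cdot\vec e_1)^2$ plus $C|\vec E^{m+1}|_{1,h}^2$; extending this sum to all $j$ (the interior part costing $C|\vec E^{m+1}|_{1,h}^2$ via \eqref{eq:xmje1}) and passing from $\delta^-$ to $\delta^1$ through $|\delta^1 a_j|^2\le\tfrac12(|\delta^- a_j|^2+|\delta^- a_{j+1}|^2)$ together with \eqref{eq:quotbd} then yields $c_4 h\sum_{j=1}^{J-1}(\delta^1\vec E^{m+1}_j\cdot\vec e_2)^2/(\vec X^m_j\cdot\vec e_1)^2$ (after shrinking $c_4$). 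The main obstacle is precisely this bookkeeping at the two degenerate endpoints: pinning down the endpoint terms with the stated constant $\tfrac53$ and controlling the correction in the boundary-strip sum without generating an uncontrolled multiple of the good quantity — this is exactly where the tailored discrete boundary conditions \eqref{eq:bcl}--\eqref{eq:bcr} (through \eqref{eq:bdh}) and the lower bounds \eqref{eq:Xje1}, \eqref{eq:xrhoe1}, \eqref{eq:xmje1}, \eqref{eq:quotbd} are indispensable.
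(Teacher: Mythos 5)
Your proposal is correct and follows essentially the same strategy as the paper's proof: the telescoping identity $(\delta^1 a_j)(\delta^2 a_j)=\tfrac1{2h}\bigl((\delta^- a_{j+1})^2-(\delta^- a_j)^2\bigr)$, Abel summation, the monotonicity of the weight $\sim\bigl(|\delta^1\vec x^m_j|\,\vec X^m_j\cdot\vec e_1\bigr)^{-1}$ producing the good term $h(\vec X^m_j\cdot\vec e_1)^{-2}$, and the endpoint bookkeeping via $\vec X^m_1\cdot\vec e_1=h\,\delta^-\vec X^m_1\cdot\vec e_1$ and \eqref{eq:bdh} are exactly the paper's mechanism. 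The only (harmless, arguably cleaner) organizational difference is that you split $(\delta^1\vec x^m_j)^\perp$ exactly into its $\vec e_1$- and $\vec e_2$-components and run a single global summation by parts, with the $\vec e_1$-component controlled uniformly via \eqref{eq:defM} and \eqref{eq:Xje1}, whereas the paper compares $(\delta^1\vec x^m_j)^\perp/|\delta^1\vec x^m_j|$ with $\mp\vec e_2$ separately in the two boundary strips and treats the interior directly.
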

\begin{proof}
Let us start by writing
\begin{equation} \label{eq:t5split}
\sum_{j=1}^{J-1} \vec T^{m,3}_j \cdot \delta^2 \vec E^{m+1}_j
= \sum_{j=1}^{J_1} \vec T^{m,3}_j \cdot \delta^2 \vec E^{m+1}_j
+ \sum_{j=J_1+1}^{J-J_1-1} \vec T^{m,3}_j \cdot \delta^2 \vec E^{m+1}_j
+ \sum_{j=J-J_1}^{J-1} \vec T^{m,3}_j \cdot \delta^2 \vec E^{m+1}_j\,,
\end{equation}
and we begin by estimating the first sum on the right hand side of
\eqref{eq:t5split}. 
On recalling \eqref{eq:err1}, we can write
\begin{equation} \label{eq:t5j}
\vec T^{m,3}_j  =  \frac{1}{| \delta^1 \vec x^m_j |} 
\, \frac{\delta^1 \vec E^{m+1}_j \cdot \vec e_2}{\vec X^m_j \cdot \vec e_1}  \, \vec e_2  
   - \frac{1}{| \delta^1 \vec x^m_j |}  \frac{\delta^1 \vec E^{m+1}_j \cdot \vec e_2}{\vec X^m_j \cdot \vec e_1} \Bigl( \frac{(\delta^1 \vec x_j^m)^\perp}{| \delta^1 \vec x^m_j |}
 + \vec e_2 \Bigr)  =: \vec S^1_j+ \vec S^2_j. 
\end{equation}
Observing from \eqref{eq:fdo} that
\begin{align*}
 \bigl( \delta^1 \vec E^{m+1}_j \cdot \vec e_2 \bigr) \delta^2 \vec E^{m+1}_j \cdot \vec e_2  = &
 \frac1h (\delta^1 \vec E^{m+1}_j \cdot \vec e_2) 
 (\delta^+\vec E^{m+1}_j - \delta^- \vec E^{m+1}_j) \cdot \vec e_2 \\
= & \frac1{2 h} \bigl( ( \delta^- \vec E^{m+1}_{j+1}  \cdot \vec e_2)^2 - (\delta^- \vec E^{m+1}_j  \cdot \vec e_2)^2 \bigr),
\end{align*}
we find that 
\begin{align}\label{eq:s1j}
 h \sum_{j=1}^{J_1} \vec S^1_j \cdot  \delta^2  \vec E^{m+1}_j & 
=   \tfrac12 \sum_{j=1}^{J_1} \frac{1}{| \delta^1 \vec x^m_j |} \, \frac{1}{\vec X^m_j \cdot \vec e_1} 
\bigl( ( \delta^- \vec E^{m+1}_{j+1}  \cdot \vec e_2)^2 - (\delta^- \vec E^{m+1}_j  \cdot \vec e_2)^2 \bigr) \\ & 
= \tfrac12 \sum_{j=1}^{J_1}
\frac{(\delta^- \vec E^{m+1}_{j+1}  \cdot \vec e_2)^2}{| \delta^1 \vec x^m_j |\,\vec X^m_j \cdot \vec e_1} 
- \tfrac12 \sum_{j=0}^{J_1-1}
\frac{(\delta^- \vec E^{m+1}_{j+1}  \cdot \vec e_2)^2}{| \delta^1 \vec x^m_{j+1} |\,\vec X^m_{j+1} \cdot \vec e_1}  
 \nonumber \\ & 
= \tfrac12  \sum_{j=1}^{J_1-1} \Bigl( \frac{1}{| \delta^1 \vec x^m_j |} \, \frac{1}{\vec X^m_j \cdot \vec e_1} - \frac{1}{| \delta^1 \vec x^m_{j+1} |} \, \frac{1}{\vec X^m_{j+1} \cdot \vec e_1}
\Bigr) \, ( \delta^- \vec E^{m+1}_{j+1}  \cdot \vec e_2)^2 \nonumber \\ & \quad
- \tfrac12  \frac{( \delta^- \vec E^{m+1}_1  \cdot \vec e_2)^2}{| \delta^1 \vec x^m_1 | \, \vec X^m_1 \cdot \vec e_1}  
+ \tfrac12  \frac{( \delta^- \vec E^{m+1}_{J_1+1}  \cdot \vec e_2)^2}{| \delta^1 \vec x^m_{J_1} |\,\vec X^m_{J_1}
  \cdot \vec e_1}  . \nonumber 
\end{align}
In order to estimate the sum on the right hand side, we observe that
$|\delta^1 \vec x^m_{j+1} | \leq \frac1{2 h} \int_{q_j}^{q_{j+2}} | \vec x^m_\rho | \drho \leq C_0$, recall \eqref{eq:length}, and 
$\vec X^m_j \cdot \vec e_1 \leq \vec X^m_{j+1} \cdot \vec e_1 
\leq C \vec X^m_j \cdot \vec e_1$
for $1\leq j \leq J_1 -1$, recall \eqref{eq:xrhoe1} and \eqref{eq:quotbd}.
Hence we obtain with the help of (\ref{eq:xrhoe1}) that
\begin{align*}
& 
 \frac{1}{| \delta^1 \vec x^m_j |} \, \frac{1}{\vec X^m_j \cdot \vec e_1} -\frac{1}{| \delta^1 \vec x^m_{j+1} |} \, \frac{1}{\vec X^m_{j+1} \cdot \vec e_1}
\\ & \quad
= 
h \, \frac{1}{| \delta^1 \vec x^m_{j+1} |}  \frac{\delta^- \vec X^m_{j+1} \cdot \vec e_1}{ (\vec X^m_{j+1} \cdot \vec e_1)\vec X^m_j \cdot \vec e_1}
-  \frac{1}{\vec X^m_j \cdot \vec e_1} \Bigl( \frac{1}{| \delta^1 \vec x^m_{j+1} |} - \frac{1}{| \delta^1 \vec x^m_j |} \Bigr) \\ & \quad
\geq  h \frac{1}{C_0} \frac{\frac{c_0}{4}}{(\vec X^m_{j+1} \cdot \vec e_1)^2}  - C h \frac{1}{\vec X^m_{j+1} \cdot \vec e_1} \geq  h \frac{c_0}{8 C_0} \frac{1}{(\vec X^m_{j+1} \cdot \vec e_1)^2} - C h, \quad
1\leq j \leq J_1 - 1 .
\end{align*}
Furthermore, on noting 
$| \delta^1 \vec x^m_1 | \geq \delta^1 \vec x^m_1 \cdot \vec e_1 
= \frac1{2 h} \vec x^m_2 \cdot \vec e_1 \geq \frac1{2 h} \vec x^m_1 \cdot \vec e_1 \geq \frac{3}{8}
\frac1h \vec X^m_1 \cdot \vec e_1$, recall (\ref{eq:bdh}), we have that
\[
\tfrac12  \frac{( \delta^- \vec E^{m+1}_1  \cdot \vec e_2)^2}
{| \delta^1 \vec x^m_1 | \,\vec X^m_1 \cdot \vec e_1} 
\leq \tfrac43 h \frac{( \delta^- \vec E^{m+1}_1  \cdot \vec e_2)^2}
{(\vec X^m_1 \cdot \vec e_1)^2}.
\]
Inserting the above two estimates into (\ref{eq:s1j}) yields,
on recalling \eqref{eq:norms}, that 
\begin{align} \label{eq:s1ja}
 h \sum_{j=1}^{J_1} \vec S^1_j \cdot  \delta^2  \vec E^{m+1}_j &
\geq h \frac{c_0}{16 C_0} \sum_{j=1}^{J_1-1} \frac{ ( \delta^- \vec E^{m+1}_{j+1}  \cdot \vec e_2)^2}{(\vec X^m_{j+1} \cdot \vec e_1)^2}
- \tfrac43 h \frac{( \delta^- \vec E^{m+1}_1  \cdot \vec e_2)^2}{(\vec X^m_1 \cdot \vec e_1)^2} 
\\ & \qquad
- C | \vec E^{m+1} |_{1,h}^2.\nonumber 
\end{align} 
Note that in view of \eqref{eq:xtbc}, \eqref{eq:length} and \eqref{eq:leftbd} 
we have $\frac{\vec x_\rho(0,t)}{| \vec x_\rho(0,t)|} =  \vec e_1$,
so that $\frac{\vec x_\rho^\perp(0,t)}{| \vec x_\rho(0,t)|} =
\vec e_1^\perp =-\vec e_2$. Hence \eqref{eq:consist2} and the
smoothness of $\vec x$ imply
\begin{align*}
\left| \frac{(\delta^1 \vec x_j^m)^\perp}{| \delta^1 \vec x^m_j |} + \vec e_2 \right| &
\leq \left| \frac{(\delta^1 \vec x_j^m)^\perp }{| \delta^1 \vec x^m_j |} -
\frac{ \vec x_\rho^\perp(q_j,t_m)}{| \vec x_\rho(q_j,t_m) |} \right| 
+ \left| \frac{ \vec x_\rho^\perp(q_j,t_m)}{| \vec x_\rho(q_j,t_m) |} - \frac{ \vec x_\rho^\perp(0,t_m)}{| \vec x_\rho(0,t_m) |} \right| \nonumber \\ &
\leq C (h + q_j)  \leq C q_j \,,
\end{align*}
for $1\leq j \leq J_1$, 
which means that %we obtain 
with the help of \eqref{eq:xXjlowerleft} 
we obtain %that
\begin{align} \label{eq:s3j}
 h \sum_{j=1}^{J_1} \vec S^2_j \cdot \delta^2  \vec E^{m+1}_j & 
\geq - Ch \sum_{j=1}^{J_1} \bigl( | \delta^- \vec E^{m+1}_j | + | \delta^+ \vec E^{m+1}_j |) \, | \delta^2 \vec E^{m+1}_j |  \\ &
\geq - \epsilon | \vec E^{m+1} |_{2,h}^2 - C_\epsilon | \vec E^{m+1} |_{1,h}^2.
\nonumber
\end{align}
Combining (\ref{eq:s1ja}) and (\ref{eq:s3j}) with (\ref{eq:t5j}) we obtain
\begin{align} \label{eq:t5ja}
 h \sum_{j=1}^{J_1} \vec T^{m,3}_j \cdot \delta^2 \vec E^{m+1}_j 
& \geq h \frac{c_0}{16 C_0} \sum_{j=2}^{J_1} 
\frac{ ( \delta^- \vec E^{m+1}_{j}  \cdot \vec e_2)^2}{(\vec X^m_{j} \cdot \vec e_1)^2}
- \tfrac43 h \frac{( \delta^- \vec E^{m+1}_1  \cdot \vec e_2)^2}{(\vec X^m_1 \cdot \vec e_1)^2} \\ & \quad
- \epsilon | \vec E^{m+1} |_{2,h}^2 - C_\epsilon \, | \vec E^{m+1} |_{1,h}^2.
\nonumber 
\end{align}
In order to estimate the third sum on the right hand side of 
\eqref{eq:t5split}, we start from
\[
\vec T^{m,3}_j  
= - \frac{1}{| \delta^1 \vec x^m_j |} \,
 \frac{\delta^1 \vec E^{m+1}_j  \cdot \vec e_2}{\vec X^m_j \cdot \vec e_1}  \vec e_2 
  - \frac{1}{| \delta^1 \vec x^m_j |}   \frac{\delta^1 \vec E^{m+1}_j \cdot \vec e_2}{\vec X^m_j \cdot \vec e_1}  \Bigl( \frac{(\delta^1 \vec x_j^m)^\perp }{| \delta^1 \vec x^m_j |}
 - \vec e_2 \Bigr) ,
\]
and use similar arguments as above to obtain
\begin{align} \label{eq:t5jb}
 h \sum_{j=J-J_1}^{J-1} \vec T^{m,3}_j \cdot \delta^2 \vec E^{m+1}_j 
& \geq h \frac{c_0}{16 C_0} \sum_{j=J-J_1+1}^{J-1} \frac{ ( \delta^- \vec E^{m+1}_j  \cdot \vec e_2)^2}{(\vec X^m_j \cdot \vec e_1)^2}
- \tfrac43 h \frac{( \delta^- \vec E^{m+1}_J  \cdot \vec e_2)^2}{(\vec X^m_{J-1} \cdot \vec e_1)^2} \\ & \quad
- \epsilon | \vec E^{m+1} |_{2,h}^2 - C_\epsilon \, | \vec E^{m+1} |_{1,h}^2.
\nonumber 
\end{align}
Moreover, it follows from \eqref{eq:length}, 
\eqref{eq:lengthh} and \eqref{eq:xmje1}, that
\begin{align} \label{eq:t5jc}
&  h \sum_{j=J_1+1}^{J-J_1-1} \vec T^{m,3}_j \cdot \delta^2 \vec E^{m+1}_j   - h \frac{c_0}{16 C_0} \sum_{j=J_1+1}^{J-J_1}
\frac{ ( \delta^- \vec E^{m+1}_j  \cdot \vec e_2)^2}{(\vec X^m_j \cdot \vec e_1)^2} \\ & \quad
\geq - C  h \sum_{j=1}^{J-1} \bigl( | \delta^- \vec E^{m+1}_j | + | \delta^- \vec E^{m+1}_{j+1} | \bigr)
| \delta^2 \vec E^{m+1}_j | - C h \sum_{j=1}^{J-1} | \delta^- \vec E^{m+1}_j |^2
\nonumber \\ & \quad
\geq - \epsilon \, | \vec E^{m+1} |_{2,h}^2 -
C_\epsilon | \vec E^{m+1} |_{1,h}^2. \nonumber 
\end{align}
If we combine (\ref{eq:t5ja}), (\ref{eq:t5jb}) and (\ref{eq:t5jc}) we obtain
\begin{align} \label{eq:t5jd}
& h \sum_{j=1}^{J-1} \vec T^{m,3}_j \cdot \delta^2 \vec E^{m+1}_j 
\geq h \frac{c_0}{16 C_0} \sum_{j=2}^{J-1}
\frac{ ( \delta^- \vec E^{m+1}_j  \cdot \vec e_2)^2}{(\vec X^m_j \cdot \vec e_1)^2} \\ & \quad
- \tfrac43 h \left( \frac{( \delta^- \vec E^{m+1}_1  \cdot \vec e_2)^2}{(\vec X^m_1 \cdot \vec e_1)^2} +  \frac{( \delta^+ \vec E^{m+1}_{J-1}  \cdot \vec e_2)^2}{(\vec X^m_{J-1} \cdot \vec e_1)^2} \right)
- \epsilon | \vec E^{m+1} |_{2,h}^2 - C_\epsilon \, | \vec E^{m+1} |_{1,h}^2. 
\nonumber
\end{align}
Observing that in view of \eqref{eq:fdo+} and (\ref{eq:quotbd})
\begin{displaymath}
\frac{ |  \delta^+ \vec E^{m+1}_j \cdot \vec e_2 |}{\vec X^m_j \cdot \vec e_1} 
\leq \frac{8 C_0}{c_3} \frac{ |  \delta^- \vec E^{m+1}_{j+1} \cdot \vec e_2 |}{\vec X^m_{j+1} \cdot \vec e_1}, \quad 1 \leq j \leq J-2,
\end{displaymath}
we have
\begin{align*}
& h \sum_{j=1}^{J-1} \frac{ (  \delta^1 \vec E^{m+1}_j \cdot \vec e_2 )^2}{(\vec X^m_j \cdot \vec e_1)^2} \leq \frac{h}{2} \sum_{j=1}^{J-1} \frac{ (  \delta^+ \vec E^{m+1}_j \cdot \vec e_2 )^2}{(\vec X^m_j \cdot \vec e_1)^2}
  + \frac{h}{2} \sum_{j=1}^{J-1}  \frac{ (  \delta^- \vec E^{m+1}_j \cdot \vec e_2 )^2}{ (\vec X^m_j \cdot \vec e_1)^2}  \\
& \leq \frac{h}{2} \left(  \frac{ ( \delta^- \vec E^{m+1}_1  \cdot \vec e_2)^2}{(\vec X^m_1 \cdot \vec e_1)^2} + \frac{ ( \delta^+ \vec E^{m+1}_{J-1}  \cdot \vec e_2)^2}{(\vec X^m_{J-1} \cdot \vec e_1)^2} \right) +
C h \sum_{j=2}^{J-1} \frac{ ( \delta^- \vec E^{m+1}_j  \cdot \vec e_2)^2}{(\vec X^m_j \cdot \vec e_1)^2}.
\end{align*}
If we insert this bound into (\ref{eq:t5jd}), we deduce \eqref{eq:t5} provided that $c_4$ is small enough. \end{proof}

Combining Lemmas~\ref{lem:lemma1a}, \ref{lem:lemma1b} and \ref{lem:lemma2} we obtain after choosing $\epsilon$, $\gamma$
and $\lambda$ sufficiently small
\begin{align}\label{eq:err8}
& \frac{1+\lambda}{2 \Delta t} \bigl( | \vec E^{m+1} |_{1,h}^2 - | \vec E^m |_{1,h}^2 \bigr) + \frac{1}{16 C_0^2} | \vec E^{m+1} |_{2,h}^2  + \tfrac18 c_0^2 \lambda \, 
\left|  \frac{\vec E^{m+1}- \vec E^m}{\Delta t} \right|^2_{0,h} 
\\ & \qquad 
\leq C \bigl( | \vec E^m |_{1,h}^2 + | \vec E^{m+1} |_{1,h}^2 \bigr)  
 +C \bigl( h^4 + (\Delta t)^2 \bigr). \nonumber 
\end{align}
Furthermore, we have
\begin{align} \label{eq:err9}
\frac{1}{2 \Delta t} \bigl( | \vec E^{m+1} |_{0,h}^2 - | \vec E^m |_{0,h}^2 \bigr) 
& \leq \tfrac12 \left| \frac{\vec E^{m+1} - \vec E^m}{\Delta t} \right|_{0,h} \bigl( | \vec E^{m+1} |_{0,h} + 
| \vec E^m |_{0,h} \bigr)  \\
& \leq  \tfrac1{16} c_0^2 \lambda \, \left| \frac{\vec E^{m+1} - \vec E^m}{\Delta t} \right|^2_{0,h} + C \bigl( | \vec E^{m+1} |_{0,h}^2 + | \vec E^m |_{0,h}^2 \bigr).\nonumber 
\end{align}
On inserting (\ref{eq:err9}) into (\ref{eq:err8}),
divided by $(1+\lambda)$,
we obtain that there exist constants $c_6>0$ and $C_2>0$ such that
\begin{align}\label{eq:err11}
& \frac{1}{\Delta t} \bigl( \| \vec E^{m+1} \|_{1,h}^2 -  \| \vec E^m \|_{1,h}^2 \bigr) + c_6 \left( | \vec E^{m+1} |_{2,h}^2 +  \left| \frac{\vec E^{m+1} - \vec E^m}{\Delta t} \right|^2_{0,h} \right)
  \\  & \qquad 
\leq C_2 \bigl( \| \vec E^{m+1} \|_{1,h}^2 +  \| \vec E^m \|_{1,h}^2 \bigr) 
 +C_2 \bigl( h^4 + (\Delta t)^2 \bigr) . \nonumber
\end{align}
Combining \eqref{eq:err11} with the induction hypothesis \eqref{eq:indass}
completes the proof of Theorem~\ref{thm:main}.
In fact, if we choose $h_0$ so small that 
$C_2 \Delta t \leq \frac12$ for $\Delta t \leq \gamma h_0$, then
$0 < (1 - C_2\,\Delta t)^{-1} \leq 1 + 2 C_2 \Delta t$, and so it follows from
\eqref{eq:err11} and \eqref{eq:indass} that
\begin{align*}
 \| \vec E^{m+1} \|_{1,h}^2 & \leq (1- C_2 \Delta t)^{-1} \left[ \bigl( 1 + C_2 \Delta t \bigr)  \| \vec E^m \|_{1,h}^2  + C_2 \Delta t \bigl( h^4 + (\Delta t)^2 \bigr) \right]
\\ &
\leq \bigl( 1 + 2 C_2 \Delta t \bigr)^2 \| \vec E^m \|_{1,h}^2  + C_2 \bigl( 1 + 2C_2 \Delta t \bigr) \Delta t \bigl( h^4 + (\Delta t)^2 \bigr) 
\\ & 
\leq \bigl( 1 + 2 C_2 \Delta t \bigr)^2 \bigl( h^4 + (\Delta t)^2 \bigr) e^{\mu t_m} + 2C_2\Delta t \bigl( h^4 + (\Delta t)^2 \bigr) 
\\ & 
\leq \bigl( 1 + 3 C_2 \Delta t \bigr)^2 \bigl( h^4 + (\Delta t)^2 \bigr) e^{\mu t_m} \\ &
\leq \bigl( h^4 + (\Delta t)^2 \bigr) e^{6C_2 \Delta t} e^{\mu t_m} = \bigl( h^4 + (\Delta t)^2 \bigr) e^{\mu t_{m+1}},
\end{align*}
if we choose $\mu = 6C_2$.
Since $\mu$, as well as $\gamma$, were chosen independently of $h$ and 
$\ttau$, we have shown \eqref{eq:indass} by induction.
Together with \eqref{eq:dsi0} this proves the inequality \eqref{eq:main}. 
Finally, multiplying \eqref{eq:err11} by $\Delta t$
and summing for $m=0,\ldots,M-1$ yields the bound \eqref{eq:maindt}. 

\setcounter{equation}{0}
\section{Numerical results} \label{sec:nr}

It is easy to show that a shrinking sphere with 
radius $[1 - 4\,t]^\frac12$ is a solution to (\ref{eq:mcfS}). 
In fact, the parameterization 
\begin{equation} \label{eq:solxpi}
\vec x(\rho, t) = 
[1 - 4\,t]^\frac12\, \begin{pmatrix}
\sin (\pi\,\rho) \\ \cos (\pi\,\rho)
\end{pmatrix}
\end{equation}
solves \eqref{eq:xtnew}. 
On letting $\vec x^m_j = \vec x(q_j,t_m)$, $j=0,\ldots,J$,
we compare \eqref{eq:solxpi} 
to the discrete solutions $(\vec X^{m})_{m=0,\ldots,M}$ of \eqref{eq:fd} 
and perform two convergence experiments. 
In particular, we choose 
either $\ttau = h$ or $\ttau = h^2$, 
for $h = J^{-1} = 2^{-k}$, $k=5,\ldots,9$. 

The results in Tables~\ref{tab:spherefd3} and \ref{tab:spherefd2} 
confirm the theoretical results proved in Theorem~\ref{thm:main}.
We stress that the quadratic convergence rate for the $H^1$--seminorm 
in Table~\ref{tab:spherefd2} is better than the linear 
rate observed in \cite[Table~4]{schemeD} for the finite element scheme
considered there.
This suggests that the delicate treatment of the
boundary nodes in our finite difference scheme \eqref{eq:fd} is crucial to
obtain the optimal convergence rate in Theorem~\ref{thm:main}.
\begin{table}
\center
\caption{Errors for the convergence test for (\ref{eq:solxpi})
over the time interval $[0,0.125]$ with $\ttau = h$.}
\begin{tabular}{|r|c|c|c|c|c|}
\hline
$J$ & $\displaystyle\max_{m=0,\ldots,M} |\vec x^m - \vec X^m|_{0,h}$ & EOC &
$\displaystyle\max_{m=0,\ldots,M} |\vec x^m - \vec X^m|_{1,h}$ & EOC
 \\ \hline
32  & 3.5744e-02 & ---  & 1.1225e-01 & ---  \\
64  & 2.0034e-02 & 0.84 & 6.2934e-02 & 0.83 \\
128 & 1.0690e-02 & 0.91 & 3.3582e-02 & 0.91 \\
256 & 5.5352e-03 & 0.95 & 1.7389e-02 & 0.95 \\
512 & 2.8185e-03 & 0.97 & 8.8546e-03 & 0.97 \\
\hline
\end{tabular}
\label{tab:spherefd3}
\end{table}%
\begin{table}
\center
\caption{Errors for the convergence test for (\ref{eq:solxpi})
over the time interval $[0,0.125]$ with $\ttau = h^2$.}
\begin{tabular}{|r|c|c|c|c|c|}
\hline
$J$ & $\displaystyle\max_{m=0,\ldots,M} |\vec x^m - \vec X^m|_{0,h}$ & EOC &
$\displaystyle\max_{m=0,\ldots,M} |\vec x^m - \vec X^m|_{1,h}$ & EOC
 \\ \hline
32  & 1.0024e-03 & ---  & 3.1480e-03 & ---  \\
64  & 2.5201e-04 & 1.99 & 7.9165e-04 & 1.99 \\
128 & 6.3093e-05 & 2.00 & 1.9821e-04 & 2.00 \\
256 & 1.5779e-05 & 2.00 & 4.9571e-05 & 2.00 \\
512 & 3.9451e-06 & 2.00 & 1.2394e-05 & 2.00 \\
\hline
\end{tabular}
\label{tab:spherefd2}
\end{table}%

In Figure~\ref{fig:limacon} we show a simulation for mean curvature flow of a
sphere with an inscribed torus. In particular, the initial surface
selfintersects on the equator of the sphere, and has genus 0.
For the scheme \eqref{eq:fd} we choose $J=1024$ and $\ttau = 10^{-4}$.
Under mean
curvature flow, the torus attempts to shrink to a circle. For the generating
curve, this means that the cusp or swallow tail tries to disappear. Of course,
for the approximated partial differential equation this represents a
singularity, where the curvatures of the curve, and of the corresponding
axisymmetric surface, blow up. However, the discrete scheme \eqref{eq:fd} is
blind to the self-intersection and the associated singularity. Hence the finite
difference approximation simply integrates across the singularity.
The same behavior can be seen, for example, in
\cite[Figure~4.2]{DeckelnickDE05} and \cite[Figure~6]{triplejMC}.
Continuing the evolution in Figure~\ref{fig:limacon}
would show the curve approaching a shrinking semicircle, that
eventually vanishes at the origin.
\begin{figure}
\center
\mbox{
\includegraphics[angle=-90,width=0.16\textwidth]{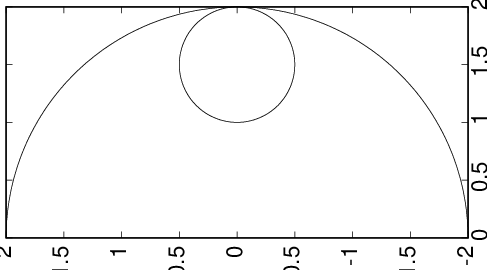}
\includegraphics[angle=-90,width=0.16\textwidth]{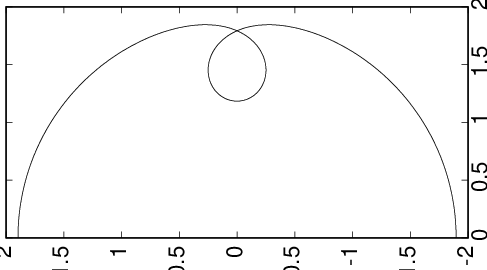}
\includegraphics[angle=-90,width=0.16\textwidth]{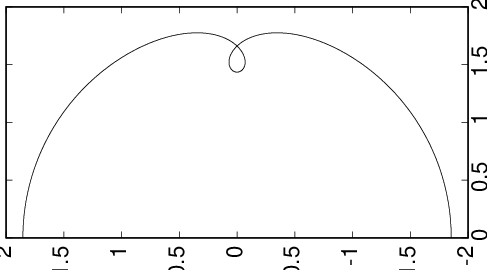}
\includegraphics[angle=-90,width=0.16\textwidth]{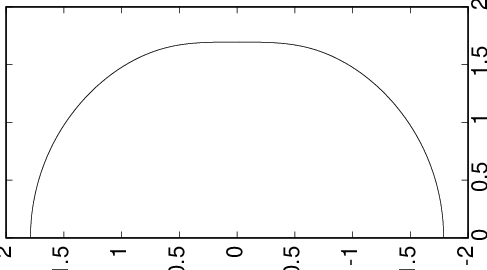}
}
\caption{Evolution for a torus inscribed within a sphere. Plots are at times
$t=0,0.1,0.14,0.2$.
}
\label{fig:limacon}
\end{figure}%

In the recent article \cite{schemeD}, the present authors numerically studied
the Angenent torus, see \cite{Angenent92,Mantegazza11}, as an example of a
self-shrinker for mean curvature flow. Here we recall that the surface
$\mathcal{S}(0)$ is called a self-shrinker, if the self-similar family of 
surfaces
\begin{equation*} % \label{eq:selfsim}
\mathcal{S}(t) = [1- t]^\frac12 \mathcal{S}(0)
\end{equation*}
is a solution to \eqref{eq:mcfS}. 
In what follows, we would like to use our approximation \eqref{eq:fd} in order
to investigate self-shrinkers of genus-0. It was shown in \cite{KleeneM14} 
that the only bounded embedded genus-0 self-shrinker in $\bR^3$ is the sphere
of radius $2$. Note that the unit sphere has an extinction time of
$T_0 = \frac14$, recall \eqref{eq:solxpi}. On the other hand, in
\cite{DruganK17} the existence of infinitely many immersed self-shrinkers with
rotational symmetry was proved. Hence, inspired by 
\cite[Figure~3]{DruganK17}, we would like compute such a self-similar
evolution for mean curvature flow. To this end, we use the open curve analogue
of \cite[(5.7),(5.8)]{schemeD} in order to calculate a profile curve of a
self-shrinker that has three self-intersections. Using the obtained curve as
initial data for the scheme \eqref{eq:fd} yields the self-similar evolution
displayed in Figure~\ref{fig:DK17b}. Here we used the discretization
parameters $J=512$ and $\ttau = 10^{-4}$. Note that the numerical method
appears to confirm the unit extinction time. In fact, continuing the evolution
until the methods breaks down yields the behaviour of the approximate 
surface area
\[
A^m = 2\pi h \sum_{j=1}^J \vec X^m_j \cdot \vec e_1 |\delta^- \vec X^m_j|
\]
as shown in Figure~\ref{fig:DK17b_sa}, with the expected linear decay and an 
approximate extinction time of 1.
\begin{figure}
\center
\mbox{
\includegraphics[angle=-90,width=0.25\textwidth]{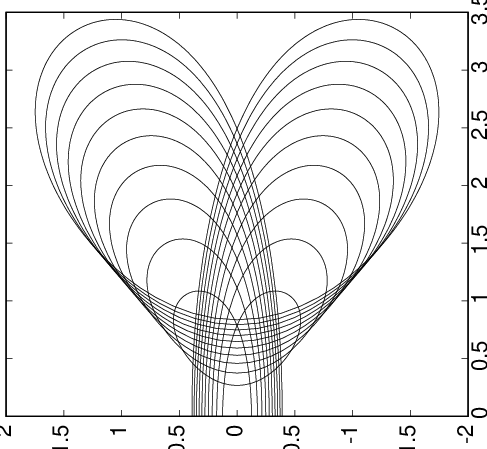}
\includegraphics[angle=-90,width=0.25\textwidth]{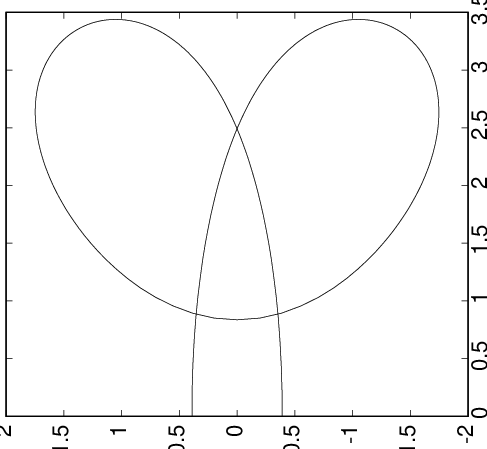}
\includegraphics[angle=-90,width=0.25\textwidth]{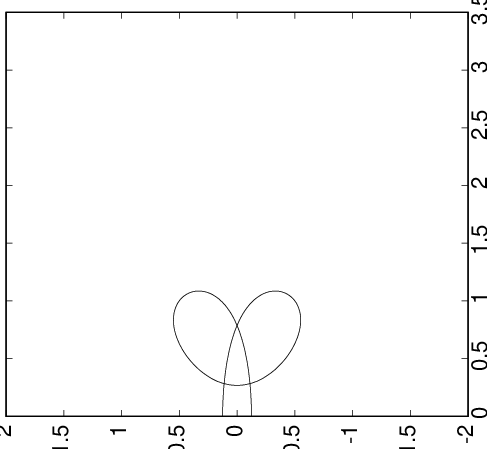}
}
\caption{Self-similar evolution for a surface with three self-intersections.
Plots are at times $t=0,0.1,\ldots,0.9$, and again at times $t=0$ and $t=0.9$.
}
\label{fig:DK17b}
\end{figure}%
\begin{figure}
\center
\includegraphics[angle=-90,width=0.4\textwidth]{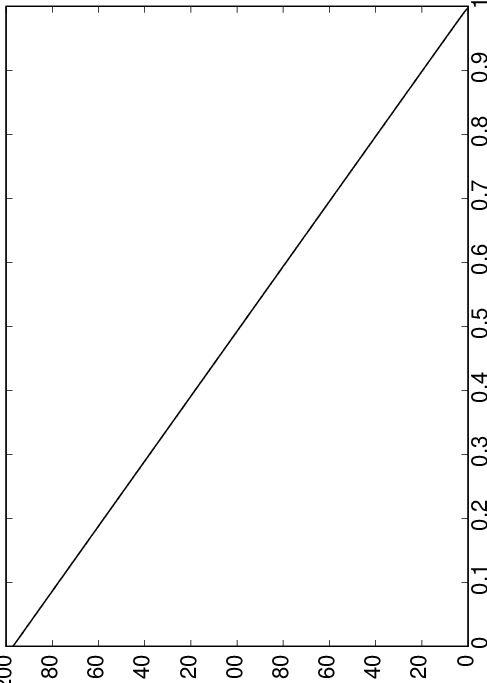}
\caption{A plot of the approximate surface area $A^m$,
for the simulation in Figure~\ref{fig:DK17b}, over time.
}
\label{fig:DK17b_sa}
\end{figure}%

Finally, we include a numerical experiment to demonstrate that our scheme can
also deal with initial data that violate the $90^\circ$ contact angle condition 
in \eqref{eq:xtbc}. To this end, in Figure~\ref{fig:cusps} we start a simulation
for a surface that has two cone singularities: an inward cone and an outward 
cone. The
generating curve has a $45^\circ$ contact angle at the axis of rotation, which
induces a discontinuous jump in time for the solution of the partial
differential equation. 
For the simulation %in Figure~\ref{fig:cusps} 
we choose $J=512$ and $\ttau = 10^{-4}$ for the scheme \eqref{eq:fd}. 
It can be observed that the outward cone very quickly
smoothens to a rounded tip, while the inward cone also smoothens and rises
at the same time. Eventually the curve approaches a shrinking semicircle, that
will shrink to a point.
\begin{figure}
\center
\mbox{
\includegraphics[angle=-90,width=0.16\textwidth]{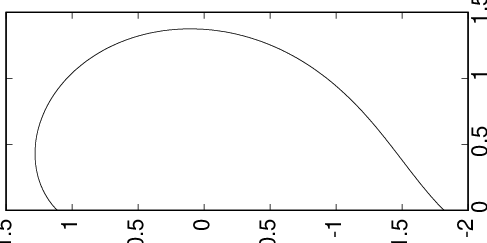}\
\includegraphics[angle=-90,width=0.16\textwidth]{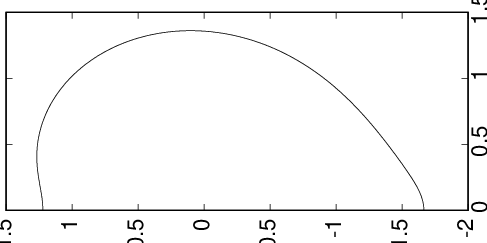}\
\includegraphics[angle=-90,width=0.16\textwidth]{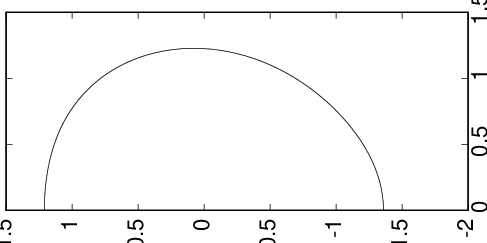}\
\includegraphics[angle=-90,width=0.16\textwidth]{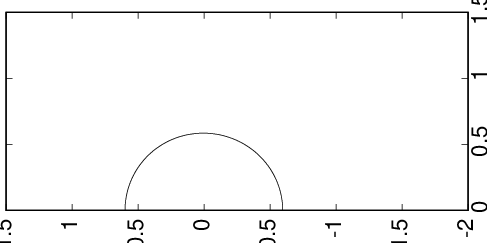}
}
\caption{Evolution for a surface with two cone singularities. Plots are at times
$t=0,0.01,0.1,0.4$.
}
\label{fig:cusps}
\end{figure}%

\newpage
\begin{appendix}
\renewcommand{\theequation}{\Alph{section}.\arabic{equation}}

\setcounter{equation}{0} 
\section{Properties of the solution} \label{sec:A}

\begin{lemma}[Behaviour at the boundary] 
Let $\vec x: [0,1] \times [0,T] \to \bR^2$ 
satisfy {\rm Assumption~\ref{ass:smooth}}. 
Then we have 
\begin{subequations} \label{eq:bc123}
\begin{alignat}{2}
\vec x_{\rho \rho}\cdot \vec e_1 =
\vec x_{\rho \rho}\cdot \vec x_\rho & =0
\quad &&\text{on } \{0,1\}\times [0,T],\label{eq:bc2} \\
\vec x_{\rho \rho \rho} \cdot \vec e_2 & =0
\quad &&\text{on } \{0,1\}\times [0,T],\label{eq:bc3} \\
\vec x_t & = 
2 \frac{\vec x_{\rho \rho} \cdot \vec e_2}{| \vec x_\rho |^2} 
\vec e_2 \quad &&\text{on } \{0,1\}\times [0,T]. \label{eq:speedbd} 
\end{alignat}
\end{subequations}
\end{lemma}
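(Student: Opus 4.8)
The plan is to combine the two boundary conditions in \eqref{eq:xtbc} with the flow equation \eqref{eq:xtnewflow} and to pass to the limit $\rho\searrow 0$; the endpoint $\rho=1$ is handled identically. Write $u:=\vec x\cdot\vec e_1$ and $v:=\vec x\cdot\vec e_2$, so that on $\{0\}\times[0,T]$ we have $u\equiv 0$ and $v_\rho\equiv 0$. Differentiating these two identities in $t$ — legitimate since $\partial_t\partial_\rho^j\vec x$ is continuous and hence mixed partials may be interchanged — gives $u_t(0,t)=0$ and $v_{\rho t}(0,t)=0$ for all $t$. Moreover, $v_\rho(0,t)=0$ together with $|\vec x_\rho|\ge c_0$ forces $\vec x_\rho(0,t)=\pm|\vec x_\rho(0,t)|\,\vec e_1$, and the plus sign is selected by $\vec x_\rho(0,t)\cdot\vec e_1=\lim_{\rho\searrow 0}\rho^{-1}\,\vec x(\rho,t)\cdot\vec e_1\ge 0$, using \eqref{eq:pos}; hence $\vec x_\rho(0,t)=|\vec x_\rho(0,t)|\,\vec e_1$ with $u_\rho(0,t)=|\vec x_\rho(0,t)|\ge c_0$, and $\vec x_\rho^\perp(0,t)=-|\vec x_\rho(0,t)|\,\vec e_2$.

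Next I would establish \eqref{eq:bc2} and \eqref{eq:speedbd} simultaneously. Passing to the limit $\rho\searrow 0$ in \eqref{eq:xtnewflow}, the term $\vec x_{\rho\rho}/|\vec x_\rho|^2$ converges by continuity, while the singular term converges — by L'Hospital's rule applied to $\frac{\vec x_\rho\cdot\vec e_2}{\vec x\cdot\vec e_1}$ together with the identification of $\vec x_\rho(0,t)$ just made — to $\frac{\vec x_{\rho\rho}(0,t)\cdot\vec e_2}{|\vec x_\rho(0,t)|^2}\,\vec e_2$, i.e.\ to the limit already displayed in the introduction. This yields
\begin{equation*}
\vec x_t(0,t)=\frac{\vec x_{\rho\rho}(0,t)}{|\vec x_\rho(0,t)|^2}+\frac{\vec x_{\rho\rho}(0,t)\cdot\vec e_2}{|\vec x_\rho(0,t)|^2}\,\vec e_2.
\end{equation*}
Taking the $\vec e_1$-component and using $u_t(0,t)=0$ gives $\vec x_{\rho\rho}(0,t)\cdot\vec e_1=0$, whence also $\vec x_{\rho\rho}(0,t)\cdot\vec x_\rho(0,t)=|\vec x_\rho(0,t)|\,\bigl(\vec x_{\rho\rho}(0,t)\cdot\vec e_1\bigr)=0$; this is \eqref{eq:bc2}. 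Inserting $\vec x_{\rho\rho}(0,t)=\bigl(\vec x_{\rho\rho}(0,t)\cdot\vec e_2\bigr)\vec e_2$ back into the displayed identity produces \eqref{eq:speedbd}.

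The remaining claim \eqref{eq:bc3}, namely $\vec x_{\rho\rho\rho}(0,t)\cdot\vec e_2=v_{\rho\rho\rho}(0,t)=0$, is the delicate one, since $v_\rho$ vanishes only at $\rho=0$ and not on a neighbourhood, so it cannot be differentiated in $\rho$ directly. The idea is to take the $\vec e_2$-component of \eqref{eq:xtnewflow} on $(0,1)$ and multiply it by $u\,|\vec x_\rho|^2=u\,(u_\rho^2+v_\rho^2)$, which gives the identity
\begin{equation*}
u\,(u_\rho^2+v_\rho^2)\,v_t \;=\; u\,v_{\rho\rho}+v_\rho\,u_\rho ;
\end{equation*}
both sides are continuous on $[0,1]$, so the identity extends to $\rho=0$. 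Assumption~\ref{ass:smooth} allows it to be differentiated twice in $\rho$; evaluating the second $\rho$-derivative at $\rho=0$ and substituting the facts already known — $u(0,t)=0$, $v_\rho(0,t)=0$, $u_{\rho\rho}(0,t)=0$ and $v_{\rho t}(0,t)=0$ — makes the left-hand side vanish and collapses the right-hand side to $3\,u_\rho(0,t)\,v_{\rho\rho\rho}(0,t)$; since $u_\rho(0,t)\ge c_0>0$, this forces $v_{\rho\rho\rho}(0,t)=0$. The main obstacle is exactly this bookkeeping step: one must check that in the twice-differentiated identity every term other than $3u_\rho v_{\rho\rho\rho}$ carries a factor ($u$, $v_\rho$, $u_{\rho\rho}$, or $v_{\rho t}$) known to vanish at $\rho=0$, and verify that the regularity in Assumption~\ref{ass:smooth} is enough both to carry out the two $\rho$-differentiations and to justify the interchange of $t$- and $\rho$-derivatives used along the way.
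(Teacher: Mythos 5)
Your argument is correct. For \eqref{eq:bc2} and \eqref{eq:speedbd} it coincides with the paper's proof: identify $\vec x_\rho(0,t)=|\vec x_\rho(0,t)|\,\vec e_1$ from \eqref{eq:xtbc} and \eqref{eq:pos}, pass to the limit in \eqref{eq:xtnewflow} via L'Hospital, and read off the $\vec e_1$-component using $\vec x_t(0,t)\cdot\vec e_1=0$. For \eqref{eq:bc3} you take a genuinely different route. The paper differentiates \eqref{eq:xtnewflow} in $\rho$ and then evaluates the differentiated singular quotient $\bigl(\frac{\vec x_\rho\cdot\vec e_2}{\vec x\cdot\vec e_1}\bigr)_\rho$ at the boundary by a further application of L'Hospital (see \eqref{eq:xtrho}--\eqref{eq:limitbd}), obtaining $\vec x_{t\rho}(0,t)\cdot\vec e_2=\tfrac32\,\vec x_{\rho\rho\rho}(0,t)\cdot\vec e_2/|\vec x_\rho(0,t)|^2$ and concluding from $\vec x_{t\rho}(0,t)\cdot\vec e_2=0$. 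You instead clear the denominator first, rewriting the $\vec e_2$-component of the flow as the polynomial identity $u\,(u_\rho^2+v_\rho^2)\,v_t=u\,v_{\rho\rho}+v_\rho u_\rho$, which extends to $\rho=0$ by continuity, and then differentiate twice in $\rho$; I have checked that after inserting $u=v_\rho=u_{\rho\rho}=v_{\rho t}=0$ at $\rho=0$ the right-hand side indeed collapses to $3u_\rho(0,t)v_{\rho\rho\rho}(0,t)$ and the left-hand side to $0$, and that the derivatives required ($\partial_\rho^4 v$ and $\partial_t\partial_\rho^2 v$, both multiplied by vanishing factors) are covered by Assumption~\ref{ass:smooth}. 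Your version avoids the slightly delicate iterated L'Hospital evaluation of \eqref{eq:limitbd} at the cost of the product-rule bookkeeping you describe; the paper's version keeps the computation at the level of the PDE itself and produces the explicit boundary identity \eqref{eq:xtrho0} for $\vec x_{t\rho}(0,t)$ as a by-product. Both are valid under the stated regularity.
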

\begin{proof} 
We have from \eqref{eq:xtbc} that
\begin{equation} \label{eq:xrhoperp}
\vec x_\rho^\perp(0,t) = (| \vec x_\rho(0,t) | \, \vec e_1)^\perp 
= - | \vec x_\rho(0,t) | \, \vec e_2, 
\end{equation}
and so we obtain with the help of  L'Hospital's rule that
\begin{align} \label{eq:hosp}
\lim_{\rho \searrow 0} \frac{1}{| \vec x_\rho(\rho,t) |^2} \frac{ \vec x_\rho(\rho,t) \cdot \vec e_2}{\vec x(\rho,t) \cdot \vec e_1} \vec x_\rho^\perp(\rho,t)
& = 
\frac{1}{| \vec x_\rho(0,t) |^2} \frac{ \vec x_{\rho \rho} (0,t) \cdot \vec e_2}{\vec x_\rho(0,t) \cdot \vec e_1} \vec x_\rho^\perp(0,t) \\ & 
= -  \frac{\vec x_{\rho \rho} (0,t) \cdot \vec e_2}{| \vec x_\rho(0,t) |^2} \vec e_2.\nonumber 
\end{align}
Thus (\ref{eq:xtnewflow}) implies that
\begin{equation}  \label{eq:xtbd}
\vec x_t(0,t) = \frac{\vec x_{\rho \rho}(0,t)}{| \vec x_\rho(0,t) |^2} + \frac{\vec x_{\rho \rho} (0,t) \cdot \vec e_2}{| \vec x_\rho(0,t) |^2} \vec e_2.
\end{equation}
Observing from \eqref{eq:xtbc} that 
$\vec x_t(0,t) \cdot \vec e_1 =0$, we infer from \eqref{eq:xtbd} that 
$\vec x_{\rho \rho}(0,t) \cdot \vec e_1 =0$,
which together with \eqref{eq:xtbc} proves \eqref{eq:bc2} at $\rho=0$. 
In particular, 
$\vec x_{\rho \rho}(0,t) = (\vec x_{\rho \rho}(0,t) \cdot \vec e_2) \vec e_2$.
Combining this with (\ref{eq:xtbd}) yields (\ref{eq:speedbd}) at $\rho=0$.
In order to prove (\ref{eq:bc3}),
we differentiate (\ref{eq:xtnewflow}) with respect to $\rho$ and obtain
\begin{equation}
\vec x_{t \rho} = \frac{\vec x_{\rho \rho \rho}}{| \vec x_\rho |^2} - 2 \frac{\vec x_{\rho \rho} \cdot \vec x_\rho}{| \vec x_\rho |^4} \vec x_{\rho \rho} + 
\frac{\vec x_\rho \cdot \vec e_2}{\vec x \cdot \vec e_1} \bigl( 2 \frac{\vec x_{\rho \rho} \cdot \vec x_\rho}{| \vec x_\rho |^4} \vec x_\rho^\perp - \frac{1}{| \vec x_\rho |^2} \vec x_{\rho \rho}^\perp \bigr) 
 - \bigl( \frac{\vec x_\rho \cdot \vec e_2}{\vec x \cdot \vec e_1} \bigr)_\rho \, \frac{1}{| \vec x_\rho |^2} \vec x_\rho^\perp 
\label{eq:xtrho}
\end{equation}
in $(0,1) \times (0,T]$.
A further application of L'Hospital's rule implies that
\begin{align}\label{eq:limitbd}
\lim_{\rho \searrow 0} \bigl( \frac{\vec x_\rho \cdot \vec e_2}{\vec x \cdot \vec e_1} \bigr)_\rho(\rho,t) &= \lim_{\rho \searrow 0} \frac{ ( \vec x_{\rho \rho}(\rho,t) \cdot \vec e_2) ( \vec x(\rho,t) \cdot \vec e_1) -
(\vec x_\rho(\rho,t) \cdot \vec e_2) ( \vec x_\rho(\rho,t) \cdot \vec e_1)}{(\vec x(\rho,t) \cdot \vec e_1)^2} \\
&= \lim_{\rho \searrow 0} \frac{ ( \vec x_{\rho \rho \rho}(\rho,t) \cdot \vec e_2)( \vec x(\rho,t) \cdot \vec e_1) - ( \vec x_\rho(\rho,t) \cdot \vec e_2)( \vec x_{\rho \rho}(\rho,t) \cdot \vec e_1)}{
2 (\vec x(\rho,t) \cdot \vec e_1) (\vec x_{\rho}(\rho,t) \cdot \vec e_1)} \nonumber \\
& = \tfrac12 \frac{\vec x_{\rho \rho \rho}(0,t) \cdot  \vec e_2}{ \vec x_\rho(0,t) \cdot \vec e_1}, \nonumber 
\end{align}
since $\vec x_{\rho \rho}(0,t) \cdot \vec e_1=0$. 
Combining (\ref{eq:xtrho}) and (\ref{eq:limitbd}), on noting
\eqref{eq:bc2}, %$\vec x_{\rho \rho}(0,t) \cdot \vec x_\rho(0,t)=0$, 
\eqref{eq:xtbc} and \eqref{eq:xrhoperp}, yields that 
\begin{align} \label{eq:xtrho0}
\vec x_{t \rho}(0,t) = 
\frac{\vec x_{\rho \rho \rho}(0,t)}{| \vec x_\rho(0,t) |^2} 
+ \tfrac12 \frac{\vec x_{\rho \rho \rho}(0,t) \cdot  \vec e_2}
{| \vec x_\rho(0,t) |^2} \vec e_2.
\end{align}
Since $\vec x_{t \rho}(0,t) \cdot \vec e_2 =0$ in view of (\ref{eq:xtbc}), we 
deduce from (\ref{eq:xtrho0}) that also (\ref{eq:bc3}) holds at the left 
boundary point.
The proof of \eqref{eq:bc123} for the other boundary point is analogous.
\end{proof}

\begin{lemma} 
Let $\vec x: [0,1] \times [0,T] \to \bR^2$ 
satisfy {\rm Assumption~\ref{ass:smooth}}. 
Then there exists $0<c_2 < \tilde c_2$ such that
\begin{align}
c_2 \rho (1 - \rho) \leq \vec x(\rho,t) \cdot \vec e_1 \leq  \tilde c_2 \rho (1 - \rho) 
\quad \mbox{ for all } (\rho,t) \in [0,1] \times [0,T]. \label{eq:xje1}
\end{align}
Moreover, %for $i=0,1$ 
there exists $K >0$ such that for all $0 < h  \leq \tfrac12\delta$,
with $\delta$ as in \eqref{eq:leftright},
\begin{align} 
\frac{1}{\vec x \cdot \vec e_1} \left| 
\frac{ \partial^{\ell}_t \vec x(\cdot+h,t) 
- \partial^{\ell}_t \vec x(\cdot-h,t) }{2h} \cdot \vec e_2
\right| \leq K 
\quad \mbox{ in } [h, 1-h] \times [0,T]
,\ \ell = 0,1. \label{eq:defM}
\end{align}
\end{lemma}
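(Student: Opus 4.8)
The plan is to prove the two inequalities separately; both are elementary once one exploits the vanishing of $\vec x\cdot\vec e_1$ --- and, for \eqref{eq:defM}, also of $\partial_t^\ell\vec x_\rho\cdot\vec e_2$ --- at the endpoints $\rho\in\{0,1\}$, together with the boundary-layer bounds \eqref{eq:leftright}. In each case I would split the relevant interval into a left boundary layer, a central part, and a right boundary layer, and handle the three pieces separately.

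For \eqref{eq:xje1}, recall from \eqref{eq:xtbc} that $\vec x(0,t)\cdot\vec e_1 = \vec x(1,t)\cdot\vec e_1 = 0$, so that $\vec x(\rho,t)\cdot\vec e_1 = \int_0^\rho \vec x_\rho(s,t)\cdot\vec e_1\,{\rm d}s = -\int_\rho^1 \vec x_\rho(s,t)\cdot\vec e_1\,{\rm d}s$. The upper bound $|\vec x_\rho|\leq C_0$ from \eqref{eq:length} then gives $\vec x(\rho,t)\cdot\vec e_1\leq C_0\min\{\rho,1-\rho\}\leq 2C_0\,\rho(1-\rho)$, so one may take $\tilde c_2 = 2C_0$. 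For the lower bound, on $[0,\tfrac12\delta]$ the first representation together with \eqref{eq:leftbd} gives $\vec x(\rho,t)\cdot\vec e_1\geq\tfrac12 c_0\,\rho\geq\tfrac12 c_0\,\rho(1-\rho)$; symmetrically, on $[1-\tfrac12\delta,1]$ one uses the second representation and \eqref{eq:rightbd}; and on the central interval $[\tfrac12\delta,1-\tfrac12\delta]$ the estimate \eqref{eq:lowbound} gives $\vec x(\rho,t)\cdot\vec e_1\geq c_1\geq 4c_1\,\rho(1-\rho)$. Hence $c_2 := \min\{\tfrac12 c_0,4c_1\}$ works, and $c_2<\tilde c_2$ since $c_0\leq C_0$.

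For \eqref{eq:defM}, write $\vec y := \partial_t^\ell\vec x$; since $2\ell+2\leq 4$ for $\ell\in\{0,1\}$, Assumption~\ref{ass:smooth} ensures that $\vec y_\rho$ and $\vec y_{\rho\rho}$ are continuous, hence bounded, on $[0,1]\times[0,T]$. The crucial observation is that $\vec y_\rho(0,t)\cdot\vec e_2 = \vec y_\rho(1,t)\cdot\vec e_2 = 0$: for $\ell=0$ this is contained in \eqref{eq:xtbc}, while for $\ell=1$ it follows by differentiating the identity $\vec x_\rho(\rho_0,\cdot)\cdot\vec e_2\equiv 0$, $\rho_0\in\{0,1\}$, in $t$. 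Consequently $|\vec y_\rho(s,t)\cdot\vec e_2|\leq M_\ell\,s$ on $[0,\delta]$, where $M_\ell$ bounds $|\vec y_{\rho\rho}\cdot\vec e_2|$. Now fix $0<h\leq\tfrac12\delta$; for $\rho\in[h,\tfrac12\delta]$ the whole stencil $\{\rho-h,\rho+h\}$ lies in $[0,\delta]$, so
\[
\Bigl|\frac{\vec y(\rho+h,t)-\vec y(\rho-h,t)}{2h}\cdot\vec e_2\Bigr|
\leq \frac{1}{2h}\int_{\rho-h}^{\rho+h} M_\ell\,s\,{\rm d}s = M_\ell\,\rho,
\]
and dividing by $\vec x(\rho,t)\cdot\vec e_1\geq\tfrac12 c_0\,\rho$ (from \eqref{eq:leftbd}) yields the $h$-independent bound $2M_\ell/c_0$. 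The interval $[1-\tfrac12\delta,1-h]$ is treated symmetrically using $\vec y_\rho(1,\cdot)\cdot\vec e_2 = 0$ and \eqref{eq:rightbd}, while on $[\tfrac12\delta,1-\tfrac12\delta]$ one simply uses boundedness of $\vec y_\rho$ together with $\vec x\cdot\vec e_1\geq c_1$ from \eqref{eq:lowbound}. Taking $K$ to be the maximum of the three resulting constants over $\ell\in\{0,1\}$ completes the proof.

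I do not expect a genuine obstacle here; the only point requiring care is matching the order of vanishing of numerator and denominator near the axis. This is precisely why the hypothesis $h\leq\tfrac12\delta$ is imposed --- it forces the stencil $\{\rho-h,\rho+h\}$ to stay inside the boundary layer $[0,\delta]$ (respectively $[1-\delta,1]$) when $\rho$ lies in the corresponding half-layer --- and why the interval is split at $\tfrac12\delta$ rather than at $\delta$.
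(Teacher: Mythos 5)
Your proposal is correct and follows essentially the same route as the paper: \eqref{eq:xje1} is read off from \eqref{eq:leftright} by splitting $[0,1]$ into the two boundary layers and the central piece, and \eqref{eq:defM} is obtained by writing the difference quotient as an average of $\partial_t^\ell\vec x_\rho\cdot\vec e_2$, which vanishes at the endpoints (for $\ell=1$ by differentiating \eqref{eq:xtbc} in $t$), so that the numerator vanishes to first order and cancels the $\rho$ in the lower bound $\vec x\cdot\vec e_1\geq\tfrac12 c_0\rho$. The splitting at $\tfrac12\delta$ and the role of $h\leq\tfrac12\delta$ are exactly as in the paper's argument.
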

\begin{proof} 
The result \eqref{eq:xje1} is an immediate consequence of (\ref{eq:leftright}).

Let $t \in [0,T]$ and $h \leq \rho \leq \frac12 \delta$. 
We infer from (\ref{eq:xtbc}) and (\ref{eq:leftbd}) that $\vec x(\rho,t) \cdot \vec e_1 \geq \frac12 c_0 \rho$ and hence
\begin{align*} 
 \frac1{\vec x(\rho,t) \cdot \vec e_1} \left| 
\frac{  \vec x(\rho+h,t) - \vec x(\rho-h,t) }{2h} \cdot \vec e_2 \right|
&\leq \frac2{c_0 \rho}  \left| \frac1{2h} \int_{\rho -h}^{\rho +h} 
\bigl( \vec x_\rho(\zeta,t) - \vec x_\rho(0,t) \bigr) \cdot \vec e_2 \dzeta \right| \\ & 
\leq \frac2{c_0}  \max_{[0,\delta]} | \vec x_{\rho \rho}(\cdot,t) \cdot \vec e_2 | \\ &
\leq \frac2{c_0}  \max_{[0,\delta] \times [0,T]} | \vec x_{\rho \rho}\cdot \vec e_2 | . 
\end{align*}
We can argue in the same way for $1-\frac12 \delta \leq \rho \leq 1 -h$, 
while for $\frac12\delta \leq \rho \leq 1-\frac12 \delta$ we have that 
\begin{align*}
\frac1{\vec x(\rho,t) \cdot \vec e_1} \left| 
\frac{ \vec x(\rho+h,t) - \vec x(\rho-h,t) }{2h} \cdot \vec e_2\right| 
& \leq \frac1{c_1} \left| \frac1{2h} \int_{\rho-h}^{\rho+h} 
 \vec x_\rho(\zeta,t)  \cdot \vec e_2 \dzeta \right| \\ &
\leq \frac1{c_1} 
\max_{[\frac{\delta}{2},1-\frac{\delta}{2}]\times[0,T]} 
| \vec x_{\rho} \cdot \vec e_2 |,
\end{align*}
so that (\ref{eq:defM}) holds with $K=\max \{ \frac2{c_0} 
\max_{[0,\delta]\times[0,T]} | \vec x_{\rho \rho} \cdot \vec e_2 |, 
\frac1{c_1}  
\max_{[\frac{\delta}{2},1-\frac{\delta}{2}]\times[0,T]} | \vec x_{\rho} \cdot \vec e_2 | \}$
in the case $\ell=0$. 
The case $\ell=1$ can be treated in the same way, on noting
that $\vec x_{t \rho}(q,t) \cdot \vec e_2 =0$ for $q \in \lbrace 0,1 \rbrace$.
\end{proof}

\end{appendix}

\def\soft#1{\leavevmode\setbox0=\hbox{h}\dimen7=\ht0\advance \dimen7
  by-1ex\relax\if t#1\relax\rlap{\raise.6\dimen7
  \hbox{\kern.3ex\char'47}}#1\relax\else\if T#1\relax
  \rlap{\raise.5\dimen7\hbox{\kern1.3ex\char'47}}#1\relax \else\if
  d#1\relax\rlap{\raise.5\dimen7\hbox{\kern.9ex \char'47}}#1\relax\else\if
  D#1\relax\rlap{\raise.5\dimen7 \hbox{\kern1.4ex\char'47}}#1\relax\else\if
  l#1\relax \rlap{\raise.5\dimen7\hbox{\kern.4ex\char'47}}#1\relax \else\if
  L#1\relax\rlap{\raise.5\dimen7\hbox{\kern.7ex
  \char'47}}#1\relax\else\message{accent \string\soft \space #1 not
  defined!}#1\relax\fi\fi\fi\fi\fi\fi}


\begin{thebibliography}{10}

\bibitem{Angenent92}
{\sc S.~B. Angenent}, {\em Shrinking doughnuts}, in Nonlinear diffusion
  equations and their equilibrium states, 3 ({G}regynog, 1989), vol.~7 of
  Progr. Nonlinear Differential Equations Appl., Birkh\"{a}user Boston, Boston,
  MA, 1992, pp.~21--38.

\bibitem{schemeD}
{\sc J.~W. Barrett, K.~Deckelnick, and R.~N\"urnberg}, {\em A finite element
  error analysis for axisymmetric mean curvature flow}, IMA J. Numer. Anal.,
  (2020).
\newblock (to appear).

\bibitem{triplejMC}
{\sc J.~W. Barrett, H.~Garcke, and R.~N\"urnberg}, {\em On the variational
  approximation of combined second and fourth order geometric evolution
  equations}, SIAM J. Sci. Comput., 29 (2007), pp.~1006--1041.

\bibitem{gflows3d}
\leavevmode\vrule height 2pt depth -1.6pt width 23pt, {\em On the parametric
  finite element approximation of evolving hypersurfaces in {${\mathbb R}^3$}},
  J. Comput. Phys., 227 (2008), pp.~4281--4307.

\bibitem{aximcf}
\leavevmode\vrule height 2pt depth -1.6pt width 23pt, {\em Variational
  discretization of axisymmetric curvature flows}, Numer. Math., 141 (2019),
  pp.~791--837.

\bibitem{bgnreview}
\leavevmode\vrule height 2pt depth -1.6pt width 23pt, {\em Parametric finite
  element approximations of curvature driven interface evolutions}, in Handb.
  Numer. Anal., A.~Bonito and R.~H. Nochetto, eds., vol.~21, Elsevier,
  Amsterdam, 2020, pp.~275--423.

\bibitem{DeckelnickD95}
{\sc K.~Deckelnick and G.~Dziuk}, {\em On the approximation of the curve
  shortening flow}, in Calculus of Variations, Applications and Computations
  (Pont-\`a-Mousson, 1994), C.~Bandle, J.~Bemelmans, M.~Chipot, J.~S.~J.
  Paulin, and I.~Shafrir, eds., vol.~326 of Pitman Res. Notes Math. Ser.,
  Longman Sci. Tech., Harlow, 1995, pp.~100--108.

\bibitem{DeckelnickDE05}
{\sc K.~Deckelnick, G.~Dziuk, and C.~M. Elliott}, {\em Computation of geometric
  partial differential equations and mean curvature flow}, Acta Numer., 14
  (2005), pp.~139--232.

\bibitem{DruganK17}
{\sc G.~Drugan and S.~J. Kleene}, {\em Immersed self-shrinkers}, Trans. Amer.
  Math. Soc., 369 (2017), pp.~7213--7250.

\bibitem{Dziuk91}
{\sc G.~Dziuk}, {\em An algorithm for evolutionary surfaces}, Numer. Math., 58
  (1991), pp.~603--611.

\bibitem{Dziuk94}
\leavevmode\vrule height 2pt depth -1.6pt width 23pt, {\em Convergence of a
  semi-discrete scheme for the curve shortening flow}, Math. Models Methods
  Appl. Sci., 4 (1994), pp.~589--606.

\bibitem{Ecker04}
{\sc K.~Ecker}, {\em Regularity Theory for Mean Curvature Flow}, Birkh\"auser,
  Boston, 2004.

\bibitem{ElliottF17}
{\sc C.~M. Elliott and H.~Fritz}, {\em On approximations of the curve
  shortening flow and of the mean curvature flow based on the {D}e{T}urck
  trick}, IMA J. Numer. Anal., 37 (2017), pp.~543--603.

\bibitem{GarckeM20}
{\sc H.~Garcke and B.-V. Matioc}, {\em On a degenerate parabolic system
  describing the mean curvature flow of rotationally symmetric closed
  surfaces}, J. Evol. Equ.,  (2020).
\newblock (to appear).

\bibitem{KleeneM14}
{\sc S.~Kleene and N.~M. M{\o}ller}, {\em Self-shrinkers with a rotational
  symmetry}, Trans. Amer. Math. Soc., 366 (2014), pp.~3943--3963.

\bibitem{KovacsLL19}
{\sc B.~Kov\'{a}cs, B.~Li, and C.~Lubich}, {\em A convergent evolving finite
  element algorithm for mean curvature flow of closed surfaces}, Numer. Math.,
  143 (2019), pp.~797--853.

\bibitem{Mantegazza11}
{\sc C.~Mantegazza}, {\em Lecture notes on mean curvature flow}, vol.~290 of
  Progress in Mathematics, Birkh\"auser/Springer Basel AG, Basel, 2011.

\bibitem{Mierswa20}
{\sc A.~Mierswa}, {\em Error estimates for a finite difference approximation of
  mean curvature flow for surfaces of torus type}, PhD thesis, University
  Magdeburg, Magdeburg, 2020.

\end{thebibliography}
\end{document}